\title{Automorphism groups of Cayley graphs
generated by block transpositions and regular Cayley maps}
\author{Annachiara Korchmaros,$^{a}$ \;
Istv\'an Kov\'acs$^{\, b}$ \\  [+0.75ex]
$^a$  {\small School of Mathematics Georgia Institute of
Technology,}  \\ [-0.5ex]
{\small 686 Cherry Street Atlanta, GA 30332-0160 USA} \\ [-0.5ex]
$^b$ {\small IAM, University of Primorska, Muzejski trg 2,
SI-6000 Koper, Slovenia}
}
\date{}
\newtheorem{theorem}{Theorem}
\newtheorem{lemma}{Lemma}
\newtheorem{corollary}{Corollary}
\newtheorem{proposition}{Proposition}
\theoremstyle{definition}
\newtheorem{example}{Example}
\theoremstyle{remark}
\newtheorem{remark}{Remark}
\newcommand{\Cay}{{\rm{Cay(Sym_n}},T_n)}
\newcommand{\Aut}{\mbox{\rm Aut}}
\newcommand{\Sym}{\mbox{\rm Sym}}
\newcommand{\cupdot}{\mathbin{\mathaccent\cdot\cup}}
\newcommand{\UCay}{{\rm{Cay(Sym_n}},U)}
\newcommand{\Alt}{\mbox{\rm Alt}}
\newcommand\Z{\mathbb{Z}}
\newcommand{\CM}{\mbox{\rm CM}}
\newcommand{\Cayg}{\mbox{\rm Cay}}
\newcommand\M{\mathcal{M}}
\newcommand\D{\mathsf{D}}
\newcommand\F{\mathsf{F}}
\newcommand\f{\mathsf{f}}
\newcommand\g{\mathsf{g}}
\begin{document}

\maketitle
\let\thefootnote\relax\footnote{
The second author was partially supported by the
Slovenian Research Agency (research program P1-0285 and research projects N1-0032, N1-0038, J1-5433, and J1-6720). \\  [+0.5ex]
{\it  E-mail addresses:}
akorchmaros3@gatech.edu (A.~Korchmaros),
istvan.kovacs@upr.si (I.~Kov\'acs).
}

\begin{abstract}
This paper deals with the Cayley graph $\Cay,$ where the generating set consists of all block transpositions. A motivation for the study of these particular Cayley graphs comes from current research in Bioinformatics. As the main result, we prove that
$\Aut(\Cay)$ is the product of the left
translation group by a dihedral group $\D_{n+1}$ of order $2(n+1)$. The proof uses several properties of the subgraph $\Gamma$
of $\Cay$ induced by the set $T_n$. In particular,
$\Gamma$ is a $2(n-2)$-regular graph whose automorphism group is
$\D_{n+1},$ $\Gamma$ has as many as $n+1$ maximal cliques of size $2,$ and its subgraph $\Gamma(V)$ whose vertices are those in these cliques is a $3$-regular, Hamiltonian, and
vertex-transitive graph. A relation of the
unique cyclic subgroup of $\D_{n+1}$ of order $n+1$ with
regular Cayley maps on $\Sym_n$ is also discussed. It is shown
that the product of the left translation group by the latter group can be obtained as the automorphism group of a
non-$t$-balanced regular Cayley map on $\Sym_n$.
\medskip

\noindent{\it Keywords:} Cayley graph, symmetric group, block transposition, graph automorphism, Cayley map, regular map
\medskip

\noindent{\it MSC 2010:}  05C12, 05C30, 68R05, 92D15.

\end{abstract}

\section{Introduction}
\label{intro}

Block transpositions are well-known sorting operations with relevant applications in Bioinformatics, see~\cite{FLRTV}. They act on a string by removing a block of consecutive entries and inserting it somewhere else. In terms of the symmetric group $\Sym_n$ of degree $n$, the strings are identified with the permutations on $[n]=\{1,2,\ldots n\}$, and block transpositions are defined as follows. For any three integers $i,j,k$ with $0\le i <j<k \le n$, the \emph{block transposition} $\sigma(i,j,k)$ with cut points $(i,j,k)$ turns the permutation  $\pi=[\pi_1\,\cdots\,\pi_n]$
into the permutation $\pi'=[\pi_1\cdots \pi_i\,\,\pi_{j+1}\cdots \pi_k\,\,\pi_{i+1}\cdots \pi_{j}\,\,\pi_{k+1}\cdots \pi_n]$. This action of $\sigma(i,j,k)$ on $\pi$ can also be expressed as the composition $\pi'=\pi\circ\sigma(i,j,k)$.
The set $T_n$ of all block transpositions has size $(n+1)n(n-1)/6$ and
is an inverse closed generating set of $\Sym_n.$ The arising Cayley graph $\Cay$ is a very useful tool since ``sorting a permutation by block transpositions'' is equivalent to finding shortest paths between vertices in $\Cay,$ see~\cite{EEKSW,FLRTV,MS}.

Although the definition of a block transposition arose from a practical need, the Cayley graph $\Cay$ also presents some
interesting theoretical features (cf.~\cite{K}); and the most remarkable one is perhaps the existence of automorphisms other than left translations. We remark that the automorphism
groups of certain Cayley graphs on Symmetric groups have attracted some attention recently, see \cite{DZ,F,G}.
\medskip

In this paper, we focus on the automorphisms of the Cayley graph
$\Cay,$ and denote its full automorphism group by $\Aut(\Cay)$.
In particular, we introduce a subgroup of automorphisms contained in the vertex stabilizer of the
identity permutation $\iota,$ which is isomorphic to the dihedral group of order $2(n+1)$.  We denote this dihedral group by $\D_{n+1},$
and since it arises from the toric equivalence in $\Sym_n$ and the reverse permutation, we name it the \emph{toric-reverse group}.

In addition, the \emph{block transposition graph}, that is the subgraph
$\Gamma$ of $\Cay$ induced by the vertex set $T_n$
 has especially nice properties. As we show in this paper, $\Gamma$ is a $2(n-2)$-regular graph whose automorphism group
coincides with the permutation group induced by toric-reverse group $\D_{n+1}$. Clearly, as every automorphism in
$\D_{n+1}$ fixes $\iota,$ it also preserves the neighborhood of $\iota$
in $\Cay,$ which is the set $T_n$. Therefore, $\D_{n+1}$ induces an automorphism group of $\Gamma$.
We show that the latter group is, in fact, equal to $\Aut(\Gamma),$
the full automorphism group of $\Gamma$.

\begin{theorem}\label{main1}
The full automorphism group of $\Gamma$ is equal to the permutation group induced by toric-reverse group $\D_{n+1}$.
\end{theorem}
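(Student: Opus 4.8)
The plan is to establish the reverse inclusion $\Aut(\Gamma)\le\D_{n+1}$; the inclusion $\D_{n+1}\le\Aut(\Gamma)$ is immediate, since every element of the toric--reverse group fixes $\iota$ in $\Cay$ and hence preserves its neighbourhood $T_n$, thereby inducing an automorphism of $\Gamma$. To get the other containment I would produce a rigid combinatorial skeleton inside $\Gamma$ that is visibly preserved by every automorphism and whose own symmetry group is no larger than $\D_{n+1}$.

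The first step is to fix a workable description of $\Gamma$: identify the block transposition $\sigma(i,j,k)$ with the $3$-subset $\{i,j,k\}$ of $\Z_{n+1}=\{0,1,\dots,n\}$ formed by its cut points, so that $T_n$ is identified with $\binom{\Z_{n+1}}{3}$ and the toric--reverse group becomes the dihedral group on $\Z_{n+1}$ generated by $x\mapsto x+1$ and $x\mapsto -x$, acting coordinatewise on $3$-subsets. Next I would work out the adjacency rule of $\Gamma$ in this model --- the condition on two triples equivalent to $\sigma(i,j,k)^{-1}\sigma(i',j',k')\in T_n$ --- and record the local invariants that every element of $\Aut(\Gamma)$ must respect: the common valency $2(n-2)$, the number of triangles through a given edge, and, decisively, the set of maximal cliques of size $2$, i.e.\ edges lying in no triangle.

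The heart of the argument is the analysis of these maximal $2$-cliques. I expect to show that there are exactly $n+1$ of them, that they are pairwise disjoint, and that they form a single $\D_{n+1}$-orbit (each fixed by a unique involution of $\D_{n+1}$). Their union $V$ then has $2(n+1)$ vertices and is $\Aut(\Gamma)$-invariant, so restriction yields a homomorphism $\rho\colon\Aut(\Gamma)\to\Aut(\Gamma(V))$, and by the stated properties $\Gamma(V)$ is a connected, cubic, vertex-transitive graph on $2(n+1)$ vertices in which the $n+1$ maximal $2$-cliques constitute a distinguished perfect matching. I would then (i) read off from the explicit structure of $\Gamma(V)$ that the subgroup of $\Aut(\Gamma(V))$ preserving that matching is precisely the copy of $\D_{n+1}$ --- and if $\Gamma(V)$ happens to carry a surplus symmetry (such as a simultaneous flip of all rungs), rule it out by confronting it with one further local invariant of $\Gamma$, e.g.\ the way a maximal $2$-clique meets the neighbourhoods of vertices outside $V$; and (ii) show that $\rho$ is injective, i.e.\ that an automorphism of $\Gamma$ fixing $V$ pointwise is the identity, which follows from connectedness of $\Gamma$ together with the fact that every vertex is recoverable from its adjacency pattern to (iterated neighbourhoods of) $V$. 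Since $\rho(\D_{n+1})$ already exhausts the matching-preserving subgroup of $\Aut(\Gamma(V))$, combining (i) and (ii) forces $\Aut(\Gamma)\le\D_{n+1}$, hence equality.

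The main obstacle is the explicit combinatorics underlying the previous paragraph: converting the group-theoretic condition $\sigma(i,j,k)^{-1}\sigma(i',j',k')\in T_n$ into a transparent statement about the relative cyclic arrangement of two triples in $\Z_{n+1}$, and then extracting from it both the exact list of maximal $2$-cliques and the isomorphism type of $\Gamma(V)$. This reduces to a finite case analysis on the positions of the six cut points, but it is where essentially all the work lies; once $\Gamma(V)$ and its distinguished matching are pinned down, the remainder is a routine determination of the automorphism group of a small, highly structured cubic graph.
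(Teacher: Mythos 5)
Your reduction is sound in outline and close in spirit to the paper's: the cut-point model (identifying $T_n$ with $3$-subsets of $\Z_{n+1}$ and $\D_{n+1}$ with the natural dihedral action) is correct, and the paper likewise isolates the $n+1$ pairwise disjoint maximal $2$-cliques, their vertex set $V$ of size $2(n+1)$, and the fact that $\D_{n+1}$ acts regularly on $V$. The problem is that your two decisive steps, (i) and (ii), are exactly where all the content of the theorem lies, and you assert rather than prove them. Step (ii) is the serious gap: "an automorphism of $\Gamma$ fixing $V$ pointwise is the identity, which follows from connectedness of $\Gamma$ together with the fact that every vertex is recoverable from its adjacency pattern to (iterated neighbourhoods of) $V$" is not an argument. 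Most vertices of $\Gamma$ (e.g.\ everything in $S_{n-2}^\vartriangle$ away from the boundary) have no neighbour in $V$ at all, so "recoverability" would have to be established through several iterations, and nothing guarantees a priori that at each stage vertices are separated by their adjacencies to the already-fixed set. The paper's corresponding statement (Lemma~\ref{lemfoct12}, which is even stronger: the stabilizer of a \emph{single} vertex of $V$ is trivial) requires a genuine induction on $n$: one first pins down the neighbours of $\sigma(0,2,n)$ inside $V$ using the uniqueness of the size-$2$ cliques, the uniqueness of the maximal clique $B$ of size $n-1$ (Corollary~\ref{cor3oct13ter}) and the maximality of the clique $C=\{\sigma(0,2,k)\}$, deduces that the stabilizer preserves $B$ and hence $S_{n-2}^\vartriangle$, applies the inductive hypothesis to $\Gamma(S_{n-2}^\vartriangle)\cong\Gamma(T_{n-2})$, and then — this is the step your sketch has no substitute for — uses normality of the pointwise stabilizer of $V$ together with conjugation by toric maps (equations \eqref{eqa14oct}) to transport fixedness from $S_{n-2}^\vartriangle$ to $L\cup B$, and Proposition~\ref{prova}~(ii) to reach $F$. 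None of this is a routine consequence of connectivity.

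Step (i) also hides real work. You would need that the matching-preserving subgroup of $\Aut(\Gamma(V))$ (or at least of the restriction of $\Aut(\Gamma)$ to $V$) has order exactly $2(n+1)$; you yourself note the danger of a surplus symmetry such as the simultaneous flip of all rungs, and the promise to "rule it out by one further local invariant" is not yet a proof — note that $\g$ swaps the endpoints of most rungs but \emph{interchanges} $e_{n-2}$ and $e_n$, so the combinatorics of how rungs attach to the rest of $\Gamma$ (the cliques $B$ and $C$, the sets $L,F,S_{n-2}^\vartriangle$) is genuinely needed here, which is precisely how the paper argues instead of computing $\Aut(\Gamma(V))$ at all. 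In addition, your claim that there are exactly $n+1$ maximal $2$-cliques is itself proved in the paper by another induction (Lemma~\ref{cor1oct13}), with separate treatment of edges inside $T_{n-1}$, edges between $L$ and $F$, and edges meeting $B$; your "finite case analysis on the positions of the six cut points" would have to reproduce an argument of that scale. So the skeleton of your plan matches the paper, but as written it omits, rather than replaces, the proofs of the statements that make the theorem true.
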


The key part of the proof of Theorem~\ref{main1} is a careful 
analysis of the maximal cliques of $\Gamma$ of size $2$.
We show that $\Gamma$ has precisely $n+1$ such cliques and also look inside the subgraph $\Gamma(V)$ of $\Gamma$ induced by the set $V$ whose vertices are the $2(n+1)$ vertices of these cliques. We prove that $\Gamma(V)$ is $3$-regular, and $\D_{n+1}$
induces an automorphism group of $\Gamma(V)$ acting transitively (and hence regularly) on $V.$ As a curiosity, we also observe that
$\Gamma(V)$ is Hamiltonian. This agrees with a result of Alspach and Zhang~\cite{AZ} and confirms the Lov\'asz conjecture \cite{L} for $\Gamma(V).$

Then, we turn to the automorphism group $\Aut(\Cay),$ and using Theorem~\ref{main1}, we show that any automorphism
fixing $\iota$ belongs to $\D_{n+1}$. 
Therefore, $\Aut(\Cay)$ will be completely determined.

\begin{theorem}\label{main2}
The full automorphism group of $\Cay$ is the product of the left translation group by the toric-reverse group
$\D_{n+1}$.
\end{theorem}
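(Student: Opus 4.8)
The plan is to deduce Theorem~\ref{main2} from Theorem~\ref{main1} by a vertex-stabiliser argument together with a rigidity (``local-to-global'') step. Write $A=\Aut(\Cay)$, let $G_L\le A$ be the group of left translations $\pi\mapsto g\pi$, and let $A_\iota$ be the stabiliser of the identity permutation $\iota$ in $A$. Since $G_L$ acts regularly on the vertex set $\Sym_n$, it is transitive and $G_L\cap A_\iota=1$; hence $A=G_LA_\iota$ by the orbit--stabiliser theorem, and $|A|=n!\cdot|A_\iota|$. As the toric-reverse group $\D_{n+1}$ consists of automorphisms fixing $\iota$, it is contained in $A_\iota$, so Theorem~\ref{main2} is equivalent to the single reverse inclusion $A_\iota\le\D_{n+1}$.

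To prove this I would exploit that every $\varphi\in A_\iota$ permutes the neighbourhood $N(\iota)=T_n$ of $\iota$, and that the adjacencies among the vertices of $T_n$ inside $\Cay$ are by definition exactly the edges of the block transposition graph $\Gamma$. Thus restriction to $T_n$ yields a homomorphism $\rho\colon A_\iota\to\Aut(\Gamma)$, and by Theorem~\ref{main1} its image is contained in the permutation group induced by $\D_{n+1}$; since $\rho(\D_{n+1})$ already equals that group, we get $A_\iota=\ker\rho\cdot\D_{n+1}$. It therefore suffices to show that $\ker\rho=1$, that is, that an automorphism of $\Cay$ which fixes $\iota$ and fixes every block transposition in $T_n$ must be the identity; note that this also forces $\rho$ to be injective on $\D_{n+1}$, so that the induced permutation group really has order $2(n+1)$, as asserted in the Introduction. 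Granting $\ker\rho=1$ we obtain $A_\iota=\D_{n+1}$, and hence $A=G_L\,\D_{n+1}$ with $G_L\cap\D_{n+1}=1$, which is Theorem~\ref{main2}.

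The rigidity statement $\ker\rho=1$ is the step I expect to be the \emph{main obstacle}. I would prove it by propagating the set of fixed vertices outward from $\iota$ along $\Cay$, which is connected because $T_n$ generates $\Sym_n$. Suppose $\varphi\in\ker\rho$. A vertex $v$ at distance $2$ from $\iota$ has at least one neighbour $s\in T_n$, and every neighbour of $v$ lying in $T_n$ is fixed by $\varphi$; hence $\varphi(v)$ lies in $\bigcap_{s\in N(v)\cap T_n}N(s)$ and has precisely the same neighbourhood inside $T_n$ as $v$. So it is enough to verify that two distinct vertices at distance $2$ from $\iota$ cannot share the same set of neighbours inside $T_n$ — a finite combinatorial check carried out through the explicit description of block transpositions by their cut points, aided by the regularity and $2$-clique structure of $\Gamma$ furnished by Theorem~\ref{main1}. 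Once $\varphi$ is known to fix every vertex at distance $\le 2$ from $\iota$, the same local argument, now distinguishing a vertex at distance $d+1$ by its neighbours inside the already-fixed ball of radius $d$, lets one induct on $d$ and conclude $\varphi=\mathrm{id}$, completing the proof.
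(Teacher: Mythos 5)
Your reduction is exactly the one the paper uses: writing $\Aut(\Cay)=L(\Sym_n)A_\iota$, restricting $A_\iota$ to the neighbourhood $T_n$ of $\iota$, and invoking Theorem~\ref{main1} to get $A_\iota=\ker\rho\cdot\D_{n+1}$ (this is the paper's decomposition \eqref{eq14sept} with $N=\ker\rho$), so everything hinges on $\ker\rho=1$. The gap is precisely there. Your proof of $\ker\rho=1$ rests on the claim that two distinct vertices at distance $2$ from $\iota$ cannot have the same set of neighbours inside $T_n$, and, in your inductive step, that two distinct vertices at distance $d+1$ cannot have the same set of neighbours inside the ball of radius $d$. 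You describe this as ``a finite combinatorial check'', but it is an infinite family of assertions (one for every $n$, and as you set the induction up, for every radius $d$), no argument is offered for any of them, and nothing in Theorem~\ref{main1}, the $2(n-2)$-regularity of $\Gamma$, or its maximal $2$-cliques obviously implies such a distinguishing property of $\Cay$. This is the main obstacle you yourself flag, and it is left entirely open; as written the proof of Theorem~\ref{main2} is incomplete.

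For comparison, the paper never needs any neighbourhood-distinguishing statement: it propagates fixed points group-theoretically. Since $A_\iota$ acts on $T_n$ as $\D_{n+1}$ (Theorem~\ref{main1}) and $\D_{n+1}$ is regular on the long orbit $V$ (Lemma~\ref{lem1oct18}), an automorphism fixing $\iota$ and a single vertex of $V$ already fixes all of $T_n$; conjugating by left translations (Lemma~\ref{L2}) then spreads the fixed set along the components of $\UCay$ with $U=V$, these components are the cosets of $\langle V\rangle$, and Lemma~\ref{10apr2015} shows $\langle V\rangle$ is $\Sym_n$ or $\Alt_n$, the even case being finished by two adjacent odd block transpositions that are also fixed. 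If you want to keep your ball-growing scheme, note that a conjugation trick makes the higher radii unnecessary: if $\varphi\in\ker\rho$ fixes the ball of radius $2$ pointwise, then for $\sigma\in T_n$ the map $L_{\sigma^{-1}}\circ\varphi\circ L_\sigma$ again lies in $\ker\rho$, so $\varphi$ fixes the radius-$2$ ball around $\sigma$, and connectivity of $\Cay$ gives $\varphi=\mathrm{id}$. Even so, the whole burden then falls on the distance-$2$ claim, which you have not proved and which would still require a genuine argument valid for all $n$ (or should be replaced by an argument in the spirit of Lemma~\ref{L2}).
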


Finally, we also discuss a relation of the \emph{toric group}
$\bar{\F},$ the unique cyclic subgroup of $\D_{n+1}$ of order $n+1,$ with regular Cayley maps defined on the symmetric group
$\Sym_n$. We show that if $n\ge 5,$ then
the product of the left translation group by the toric group $\bar{\F}$
can be characterized as the automorphism group of a
non-$t$-balanced regular Cayley map on $\Sym_n$ (see
Theorem~\ref{thm11sept}). This follows from the observation that any element in $\bar{\F}$ is a skew-morphism of the group $\Sym_n$ in the sense of \cite{JS}.

\section{Background on block transpositions}
\label{sec:2}

In this paper, all groups are finite, and all graphs are finite and simple. For basic facts on Cayley graphs and combinatorial properties of permutations the reader is referred to~\cite{AB,Ba}.

Throughout the paper, $n$ denotes a positive integer. In our investigation cases $n\leq 3$ are trivial while case $n=4$ presents some results different from the general case.

For a set $X$ of size $n,$ $\Sym_X$ stands for the set of all permutations on $X$. For the sake of simplicity, $[n]=\{1,2,\ldots,n\}$ is usually taken for $X.$ As it is customary in the literature on block transpositions, we mostly adopt the functional notation for permutations:  If $\pi\in \Sym_n,$ then $\pi=[\pi_1\pi_2\cdots \pi_n]$ with $\pi(t)=\pi_t$ for every $t\in[n],$ and if
$\pi,\rho\in \Sym_n$ then $\tau=\pi\circ\rho$ is the permutation defined by $\tau(t)=\pi(\rho(t))$ for every $t\in [n]$. The \emph{reverse permutation} is $w=[n\,n-1\cdots 1]$, and $\iota=[1\,2\cdots n]$ is the \emph{identity permutation}.

For any three integers, named \emph{cut points}, $(i,j,k)$ with $0\leq i< j< k\leq n$, the \emph{block transposition} (transposition, see~\cite{GBH}) $\sigma(i,j,k)$ is defined to be the function on $[n]$:
\begin{equation}\label{feb9}
\sigma(i,j,k)_t =\left\{\begin{array}{ll}
t, &  1\leq t\leq i\quad k+1\leq t\leq n,\\
t+j-i, & i+1\leq t\leq k-j+i,\\
t+j-k, & k-j+i+1\leq t\leq k.
\end{array}
\right.
\end{equation}This shows that $\sigma(i,j,k)_{t+1}=\sigma(i,j,k)_{t} +1$ in the intervals:
\begin{equation}
\label{function}
[1,i],\quad[i+1, k-j+i],\quad[k-j+i+1,k],\quad[k+1, n],
\end{equation}where
\begin{align}
\label{cuppoints}
\sigma(i,j,k)_i&=i;&\sigma(i,j,k)_{i+1}&=j+1;&\sigma(i,j,k)_{k-j+i}&=k;\\
\sigma(i,j,k)_{k-j+i+1}&=i+1;&\sigma(i,j,k)_{k}&=j;&\sigma(i,j,k)_{k+1}&=k+1.\notag
\end{align}Actually, $\sigma(i,j,k)$ can also be represented as the permutation
\begin{equation}
\label{eq22ott12}
\sigma(i,j,k)=\left\{\begin{array}{ll}
[1\cdots i\,\, j+1\cdots k\,\, i+1\cdots j\,\, k+1 \cdots n], & 1\leq i,\,k< n,\\
{[j+1\cdots k\,\, 1\cdots j\,\, k+1 \cdots n]}, & i=0,\,k< n,\\
{[1\cdots i\,\,j+1\cdots n\,\, i+1\cdots j]}, & 1\leq i,\,k=n,\\
{[j+1\cdots n\,\, 1\cdots j]}, & i=0,\,k=n
\end{array}
\right.
\end{equation}such that the action of $\sigma(i,j,k)$ on $\pi$ is defined as the product
$$\pi\circ\sigma(i,j,k)=[\pi_1\cdots \pi_i\,\,\pi_{j+1}\cdots \pi_k\,\,\pi_{i+1}\cdots \pi_{j}\,\,\pi_{k+1}\cdots \pi_n].$$ Therefore, applying a block transposition on the right of $\pi$ consists in switching two adjacent subsequences of $\pi$, namely \emph{blocks}, without changing the order of the integers within each block. This may also be expressed by $$[\pi_1\cdots\pi_i|\pi_{i+1}\cdots\pi_j|\pi_{j+1}\cdots\pi_k|\pi_{k+1}\cdots\pi_n].$$

{}From now on, $T_n$ denotes the set of all block transpositions on $[n].$ The size of $T_n$ is equal to $n(n+1)(n-1)/6.$ Obviously, $T_n$ is not a subgroup of $Sym_n.$ Nevertheless,
$T_n$ is power and inverse closed. For any cut points $(i,j,k),$
\begin{equation}
\label{eqa18oct}
\sigma(i,j,k)^{-1}=\sigma(i,k-j+i,k),\qquad\sigma(i,i+1,k)^{j-i}=\sigma(i,j,k).
\end{equation}Also, for any two integers $i,\,k$ with $0\le i<k\le n$ the subgroup generated by $\sigma(i,i+1,k)$ consists of all $\sigma(i,j,k)$ together with the identity. In particular, $\sigma(0,1,n)$ generates a subgroup of order $n$ that often appears in our arguments. Throughout the paper, $\beta=\sigma(0,1,n)$ and $B$ denotes the set of nontrivial elements of the subgroup generated by $\beta$.

We introduce some subsets in $T_n$ that play a relevant role in our study. Every permutation $\bar{\pi}$ on $[n-1]$ extends to a permutation $\pi$ on $[n]$ such that $\pi_t=\bar{\pi}_t$ for $1\le t \le n-1$ and $\pi_n=n.$ Hence,
$T_{n-1}$ is naturally embedded in $T_n$ since every $\sigma(i,j,k)\in T_n$ with $k\neq n$ is identified with the block transposition $\bar\sigma(i,j,k)$. On the other side, every permutation $\pi'$ on $\{2,3,\ldots,n\}$ extends to a permutation on $[n]$ such that $\pi_t=\pi'_t,$ for $2\le t \le n$ and $\pi_1=1.$ Thus, $\sigma(i,j,k)\in T_n$ with $i\neq 0$ is identified with the block transposition $\sigma'(i,j,k).$ The latter block transpositions form the set
$$ S_{n-1}^\triangledown=\{ \sigma(i,j,k)|\,i\neq 0\}.$$ Also, $$S_{n-2}^\vartriangle=T_{n-1}\cap S_{n-1}^\triangledown$$ is the set of all block transpositions on the set $\{2,3,\ldots,n-1\}$.
Our discussion leads to the following results.

\begin{lemma}[Partition lemma]
Let $L=T_{n-1}\setminus S_{n-2}^\vartriangle$ and let $F=S_{n-1}^\triangledown\setminus S_{n-2}^\vartriangle$. Then
\label{lem1oct9}
$$T_n=B\cupdot L \cupdot F \cupdot S_{n-2}^\vartriangle.$$
\end{lemma}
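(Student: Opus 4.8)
The plan is to reduce the statement to a bookkeeping argument on cut points. Recall that $|T_n| = n(n+1)(n-1)/6 = \binom{n+1}{3}$, which is exactly the number of admissible triples $(i,j,k)$ with $0 \le i < j < k \le n$; hence the assignment $(i,j,k) \mapsto \sigma(i,j,k)$ is a bijection from these triples onto $T_n$, and two block transpositions coincide precisely when their cut points coincide. Therefore it suffices to partition the set of admissible triples and match each block of that partition with one of $B$, $L$, $F$, $S_{n-2}^\vartriangle$.

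First I would split the admissible triples according to the two independent binary conditions ``$i = 0$ or $i \ge 1$'' and ``$k = n$ or $k \le n-1$'', obtaining four pairwise disjoint classes whose union is everything. I then identify each class with the corresponding set in the statement. The class $i \ge 1$, $k \le n-1$ is by definition $S_{n-2}^\vartriangle = T_{n-1} \cap S_{n-1}^\triangledown$, the block transpositions supported on $\{2, 3, \ldots, n-1\}$. The class $i \ge 1$, $k = n$ is, using the embedding $S_{n-1}^\triangledown \hookrightarrow T_n$ and the definition of $F$, exactly $F = S_{n-1}^\triangledown \setminus S_{n-2}^\vartriangle$, since removing from $S_{n-1}^\triangledown$ (all triples with $i \ge 1$) those with $k \le n-1$ leaves precisely the triples with $i \ge 1$, $k = n$. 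Symmetrically, using the embedding $T_{n-1} \hookrightarrow T_n$, the class $i = 0$, $k \le n-1$ is $L = T_{n-1} \setminus S_{n-2}^\vartriangle$.

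It remains to treat the class $i = 0$, $k = n$, that is, the block transpositions $\sigma(0,j,n)$ with $1 \le j \le n-1$, and to check that this set equals $B$. Here I would invoke the fact recorded just before the lemma: for $0 \le i < k \le n$ the cyclic subgroup generated by $\sigma(i,i+1,k)$ consists of $\iota$ together with all $\sigma(i,j,k)$. Specializing to $i = 0$, $k = n$ shows that the subgroup generated by $\beta = \sigma(0,1,n)$ is $\{\iota\} \cup \{\sigma(0,j,n) : 1 \le j \le n-1\}$, so its set of nontrivial elements $B$ is precisely the class $i = 0$, $k = n$. Assembling the four identifications and using the disjointness noted above gives $T_n = B \cupdot L \cupdot F \cupdot S_{n-2}^\vartriangle$.

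I do not expect a genuine obstacle here; the only points requiring care are to keep the two embeddings $T_{n-1} \hookrightarrow T_n$ and $S_{n-1}^\triangledown \hookrightarrow T_n$ straight, so that $L$ and $F$ are read off correctly in terms of cut points, and to confirm that the four classes are defined by mutually exclusive conditions on the pair $(i,k)$, so that the union is honestly disjoint.
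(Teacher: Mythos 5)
Your proposal is correct and follows essentially the same route as the paper, which offers no separate proof beyond the preceding discussion: partition the cut points by the cases $i=0$ versus $i\ge 1$ and $k=n$ versus $k\le n-1$, identify the classes $i=0,\,k\ne n$ and $i\ne 0,\,k=n$ and $i\ne 0,\,k\ne n$ with $L$, $F$, $S_{n-2}^\vartriangle$ respectively, and use $\sigma(0,1,n)^{j}=\sigma(0,j,n)$ to see that the class $i=0,\,k=n$ is exactly $B$. Your extra remark that distinct cut points give distinct block transpositions (needed for disjointness, and recoverable from the permutation or from the size count) is a point the paper leaves implicit, and it is handled adequately.
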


With the above notation, $L$ is the set of all $\sigma(0,j,k)$ with $k\neq n$, and $F$ is the set of all $\sigma(i,j,n)$ with $i\neq0.$ Furthermore, $|B|=n-1,\,|L|=|F|=(n-1)(n-2)/2,$ and $|S_{n-2}^\vartriangle|=(n-1)(n-2)(n-3)/6.$

\section{Toric equivalence in the symmetric group}
\label{sec:3}

The definition of toric (equivalence) classes in $\Sym_n$ requires to consider permutations on $[n]^0=\{0,1,\ldots,n\}$ and recover the permutations $\pi=[\pi_1\,\cdots\pi_n]$ on $[n]$ in the form $[0\,\pi]$, where $[0\,\pi]$ stands for the permutation $[\pi_0\,\pi_1\,\cdots \pi_n]$ on $[n]^0$ with $\pi_0=0.$ Let $$\alpha=[1\,2\,\ldots\, n\,0].$$ For any integer $r$ with $0\le r \le n,$
\begin{equation}\label{alpha_powers}
 \alpha^r_x\equiv x+r {\pmod{n+1}},\qquad 0\leq x\leq n.
\end{equation}This gives rise to the
\emph{toric maps} $\f_r$ on $\rm{Sym_n}$ with $0\le r\le n,$ defined by
\begin{equation}\label{eq2oct9}
\f_r(\pi)=\rho\Longleftrightarrow[0\rho]= \alpha^{n+1-\pi_r}\circ [0\,\pi] \circ \alpha^r.
\end{equation}
The \emph{toric class} of $\pi$ is
\begin{equation}\label{eq1oct10}
\F(\pi)=\{\f_r(\pi)\,\mid\,r=0,1,\ldots,n\}.
\end{equation}
Since
\begin{equation}
\label{eq9oct}
(\f_r(\pi))_t=\pi_{r+t}-\pi_r,\qquad t\in [n],
\end{equation}where the indices are taken mod$(n+1),$ \eqref{eq1oct10} formalizes the intuitive definition of toric classes introduced in~\cite{EEKSW} by Eriksson et al.

{}From \eqref{eq2oct9}, $\f_s  \circ \f_r=\f_{s+r}$, where the indices are taken $\pmod{n+1}$.
With a slight abuse of notation, the symbol $\circ$ will also be used for the group operation of $\Sym_X$ where $X=\Sym_n$.
In fact, $(\f_s\circ\f_r )(\pi)=\f_s(\varphi)$ with $[0\,\varphi]=\alpha^{n+1-\pi_r}\circ[0\,\pi]\circ\alpha^r.$
Also, $(\f_s\circ\f_r)(\pi)=\mu$ with
$$\begin{array}{lll}
[0\,\mu]&=& \alpha^{n+1-\varphi_s}\circ [0\,\varphi] \circ \alpha^s\\
{}&=&\alpha^{n+1-\pi_{r+s}}\circ [0\,\pi] \circ\alpha^{r+s},
\end{array}$$where $\varphi_s=\pi_{r+s}-\pi_r$. Hence, $\f_r=
\f^{\,r}$ with $\f=\f_1$, and the set
$$\F=\{\f_r\,\mid\,r=0,1,\ldots, n\}$$ is a cyclic group of order $n+1$ generated by $\f.$
Consequently, the toric class of $\pi$ 
consists of the $\F$-orbit containing
$\pi,$ and being so, the number of elements in a toric class is always a divisor of $n+1$. We note that there are exactly $\varphi(n+1)$ classes that have only one element, where $\varphi$ is the Euler function,
see~\cite{C}.

If $\pi\in \Sym_n$ and $0\le r \le n,$ then
\begin{equation}
\label{lem3oct11}
(\f_{r}(\pi))^{-1}=\f_{\pi_r}(\pi^{-1}).
\end{equation}
In particular, $(\f_{r}(\pi))^{-1}=\f_{r}(\pi^{-1})$ provided that
$\pi_r=r.$

The \emph{reverse map} $\g$ on $\Sym_n$ is defined by
\begin{equation}
\label{eq2oct9b}
\g(\pi)=\rho\Longleftrightarrow[0\,\rho]=[0\,w]\circ [0\,\pi] \circ [0\,w].
\end{equation}
Thus $\g$ is an involution, and
\begin{equation}
\label{eq9octb}
(\g(\pi))_t=n+1-\pi_{n+1-t},\qquad t\in [n].
\end{equation}
Also, for every integer $r$ with $0\le r\le n,$
\begin{equation}
\label{eqoct15a}
\g\circ\f_r\circ\g=\f_{n+1-r}.
\end{equation} In fact, $(\g\circ\f_r\circ\g)(\pi)=(\g\circ\f_r)(\rho)=\g(\rho')$ with
$[0\,\rho]=[0\,w]\circ[0\,\pi]\circ[0\,w],$ and $[0\,\rho']=\alpha^{n+1-\rho_r}\circ [0\,\rho] \circ \alpha^r.$ Also, $(\g\circ\f_r\circ\g)(\pi)=
\mu$ with
$$\begin{array}{lll}
[0\,\mu]&=& [0\,w]\circ [0\,\rho'] \circ [0\,w]\\
{}&=&[0\, w]\circ\alpha^{n+1-\rho_{r}}\circ [0\, w]\circ [0\,\pi]\circ [0\, w] \circ\alpha^{r}\circ[0\, w]\\
{}&=&\alpha^{n+1-\pi_{n+1-r}}\circ [0\,\pi]\circ\alpha^{n+1-r}
\end{array}$$since $[0\,w]\circ\alpha^r \circ [0\,w]=\alpha^{n+1-r}.$

Now, we show that the toric maps and the reverse map take any block transposition into a block transposition, as corollary of the following lemma.

\begin{lemma}
\label{lemc19ott2013}
Let $\sigma(i,j,k)$ be any block transposition on $[n]$. Then
\begin{equation}
\label{22march2015A}
\f(\sigma(i,j,k))=\left\{\begin{array}{ll}
\sigma(i-1,j-1,k-1), & i >0 ,\\
\sigma(k-j-1,n-j,n), & i=0,
\end{array}
\right.
\end{equation}and
\begin{equation}\label{eq1oct11}
\g(\sigma(i,j,k))=\sigma(n-k,n-j,n-i).
\end{equation}
\end{lemma}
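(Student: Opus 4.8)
The plan is to prove both identities by direct computation from the coordinate descriptions of $\f$ and $\g$. Specializing \eqref{eq9oct} to $r=1$ gives $(\f(\pi))_t=\pi_{t+1}-\pi_1\pmod{n+1}$, where indices are read $\pmod{n+1}$ and $\pi_0=0$, while \eqref{eq9octb} gives $(\g(\pi))_t=n+1-\pi_{n+1-t}$. In each case I would substitute $\pi=\sigma(i,j,k)$, read the needed values $\sigma(i,j,k)_s$ off the interval description \eqref{feb9}--\eqref{cuppoints}, and check that the resulting sequence coincides, interval by interval, with the interval description of the block transposition claimed on the right-hand side.

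For \eqref{22march2015A} the natural split is according to whether $i>0$ or $i=0$, because this changes the value $\sigma(i,j,k)_1$ that is subtracted. If $i>0$, then $\sigma(i,j,k)_1=1$, so $(\f(\sigma(i,j,k)))_t=\sigma(i,j,k)_{t+1}-1$; this is just a simultaneous shift of indices and values by one, and running over the four intervals of \eqref{function}, together with the wrap-around value $\sigma(i,j,k)_{n+1}=\sigma(i,j,k)_0=0$ used at $t=n$, shows the result equals $\sigma(i-1,j-1,k-1)$, the cases $k<n$ and $k=n$ going through identically. If $i=0$, then $\sigma(0,j,k)_1=j+1$, so $(\f(\sigma(0,j,k)))_t=\sigma(0,j,k)_{t+1}-(j+1)\pmod{n+1}$, and now the modular reduction genuinely enters: on the middle blocks the raw difference is negative and must be corrected by adding $n+1$. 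Carrying this out on the intervals $[1,k-j]$, $[k-j+1,k]$, $[k+1,n]$ of $\sigma(0,j,k)$ and matching them against the intervals $[1,k-j-1]$, $[k-j,k-1]$, $[k,n]$ of $\sigma(k-j-1,n-j,n)$ yields the claim, again with the only boundary distinction being $k<n$ versus $k=n$.

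Identity \eqref{eq1oct11} is cleaner, since no case distinction is needed: $0\le i<j<k\le n$ automatically gives $0\le n-k<n-j<n-i\le n$, so the right-hand side is always a genuine block transposition of the same shape, and $\g$ restricts to a bijection of $T_n$. Putting $s=n+1-t$ in $(\g(\sigma(i,j,k)))_t=n+1-\sigma(i,j,k)_{n+1-t}$ converts the four $s$-intervals $[1,i]$, $[i+1,k-j+i]$, $[k-j+i+1,k]$, $[k+1,n]$ into the $t$-intervals $[n-i+1,n]$, $[n-i+j-k+1,n-i]$, $[n-k+1,n-i+j-k]$, $[1,n-k]$, on which the transformed values are $t$, $t+i-j$, $t+k-j$, $t$ respectively; these are exactly the intervals and values of $\sigma(n-k,n-j,n-i)$. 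I expect the only real obstacle to be the bookkeeping in the $i=0$ branch of \eqref{22march2015A}: tracking which interval the shifted index $t+1$ falls in and applying the modular correction consistently, so that the new cut points $(k-j-1,\,n-j,\,n)$ come out right. Everything else is a routine, if somewhat tedious, interval chase.
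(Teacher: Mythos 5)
Your proposal is correct and follows essentially the same route as the paper: a direct, interval-by-interval computation from the explicit description \eqref{feb9}--\eqref{eq22ott12} of block transpositions, with the case split $i>0$ versus $i=0$ driven by the value $\sigma(i,j,k)_1$ (the paper phrases the computation as composing $[0\,\sigma]$ with powers of $\alpha$, which is just the permutation-level form of your coordinate formula \eqref{eq9oct}). The interval matchings you indicate, including the wrap-around value at $t=n$ and the modular correction in the $i=0$ branch, are exactly the checks the paper's proof carries out.
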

\begin{proof}
For $i>0,$ let $\sigma=\sigma(i,j,k)$. Then \eqref{eq2oct9} reads
$$
\f(\sigma)=\rho \Longleftrightarrow [0\,\rho]= \alpha^{n}\circ [0\,\sigma] \circ \alpha,
$$
as $\sigma_1=1$ from \eqref{eq22ott12}.
Since $\alpha_t=t+1\pmod{n+1}$ for every $t\in [n]^0$, then, by \eqref{feb9},
\begin{equation}\label{feb91}
([0\,\sigma] \circ\alpha)_t =\left\{\begin{array}{ll}
t+1, &  0\leq t+1\leq i\quad k+1\leq t +1 \leq n,\\
t+j-i+1, & i+1\leq t+1\leq k-j+i,\\
t+j-k +1, & k-j+i+1\leq t+1\leq k.\\
\end{array}
\right.
\end{equation}

In addition, $\alpha^n_t=\alpha^{-1}_t=t-1\pmod{n+1}$ for every $t\in [n]^0.$ Thus, \eqref{feb91} yields
$$
(\alpha^n\circ[0\,\sigma]\circ\alpha)_t =\left\{\begin{array}{ll}
t, &  0\leq t\leq i-1\quad k\leq t\leq n,\\
t+j-i, & i\leq t\leq k-j+i-1,\\
t+j-k, & k-j+i\leq t\leq k-1,\\
\end{array}
\right.
$$
hence the statement holds true for $i>0$, by \eqref{feb9}. Now, suppose $i=0$. Then, from \eqref{eq22ott12} it follows $\sigma_1=j+1$, and \eqref{eq2oct9} reads
$$
\f(\sigma(0,j,k))=\rho \Longleftrightarrow [0\rho]= \alpha^{n-j}\circ [0\,\sigma(0,j,k)] \circ \alpha.
$$
Replacing $i$ with $0$ in \eqref{feb91},
$$(\alpha^{n-j}\circ [0\,\sigma(0,j,k)]\circ\alpha)_t =
\left\{\begin{array}{ll}
t, &  0\leq t\leq k-j-1,\\
t-k, & k-j\leq t\leq k-1,\\
t-j, & k\leq t\leq n.\\
\end{array}
\right.$$
Therefore, the assertion for $\f
(\sigma(0,j,k))$ follows from \eqref{feb9}.

For the reverse map, \eqref{eq22ott12} yields \eqref{eq1oct11}.
\end{proof}

Since the toric maps are powers of $\f$,
Lemma~\ref{lemc19ott2013} has the following corollary.

\begin{corollary}
\label{th1}
$T_n$ is invariant under the action of the toric maps and the reverse map.
\end{corollary}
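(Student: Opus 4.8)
The plan is to deduce the statement directly from Lemma~\ref{lemc19ott2013}, together with two elementary observations: every toric map is a power of $\f$, and an injective self-map of a finite set is onto. Since the real content is already contained in Lemma~\ref{lemc19ott2013}, what remains is only bookkeeping.

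First I would handle $\f$ itself. By Lemma~\ref{lemc19ott2013}, for a block transposition $\sigma(i,j,k)$ on $[n]$ the permutation $\f(\sigma(i,j,k))$ is $\sigma(i-1,j-1,k-1)$ when $i>0$ and $\sigma(k-j-1,n-j,n)$ when $i=0$; in each branch I would verify that the prescribed triple is a legitimate triple of cut points, i.e. that it satisfies $0\le i'<j'<k'\le n$. For $i>0$ this is immediate from $0\le i<j<k\le n$, since subtracting $1$ preserves the strict inequalities and $k-1\le n-1<n$. For $i=0$ one uses $0<j<k\le n$: then $k-j-1\ge 0$, the inequality $k-j-1<n-j$ is equivalent to $k\le n$, and $n-j<n$ holds because $j>0$. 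Hence $\f(T_n)\subseteq T_n$. For the reverse map, Lemma~\ref{lemc19ott2013} gives $\g(\sigma(i,j,k))=\sigma(n-k,n-j,n-i)$, and $0\le i<j<k\le n$ yields $0\le n-k<n-j<n-i\le n$, so this is again a block transposition and $\g(T_n)\subseteq T_n$.

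Next I would pass from $\f$ to the toric maps $\f_r=\f^{\,r}$: since $\f(T_n)\subseteq T_n$, an immediate induction gives $\f^{\,r}(T_n)\subseteq T_n$ for every $r$, so each toric map sends $T_n$ into itself. Because $\f$ generates a cyclic group of order $n+1$ inside $\Sym_X$ with $X=\Sym_n$, it is a bijection of the finite set $\Sym_n$, and an injective self-map of the finite set $T_n$ is onto; therefore $\f_r(T_n)=T_n$ for all $r$. Similarly $\g$ is an involution, hence a bijection, so the inclusion $\g(T_n)\subseteq T_n$ is in fact an equality. This gives the asserted invariance.

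I do not anticipate a genuine obstacle here: the only mildly delicate point is checking the cut-point inequalities in the $i=0$ branch of the formula for $\f$, and the only conceptual step beyond Lemma~\ref{lemc19ott2013} is the finiteness argument upgrading each inclusion $h(T_n)\subseteq T_n$ to an equality.
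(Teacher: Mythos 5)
Your proposal is correct and follows essentially the same route as the paper, which derives the corollary directly from Lemma~\ref{lemc19ott2013} together with the fact that every toric map is a power of $\f$. The extra bookkeeping you supply (verifying the cut-point inequalities in each branch and upgrading the inclusions $\f(T_n)\subseteq T_n$, $\g(T_n)\subseteq T_n$ to equalities via bijectivity on a finite set) just makes explicit what the paper leaves implicit.
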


\section{The toric-reverse group $\boldsymbol{\D_{n+1}}$}

\label{sec:4}

Since $T_n$ is an inverse closed generator set of $\Sym_n$ which does not contain the identity permutation $\iota$, the Cayley graph $\Cay$ is an undirected connected simple graph, where $\{\pi,\rho\}$ is an edge if and only if $\rho=\pi\circ\sigma(i,j,k),$ for some $\sigma(i,j,k)\in T_n.$

By a result of Cayley, every $h\in\Sym_n$ defines
\emph{the left translation}, denoted by 
$L_h,$ which is the automorphism of $\Cay$ that maps the vertex
$\pi$ to the vertex $h\circ\pi$, and hence the edge $\{\pi,\rho\}$ to the edge $\{h\circ\pi,h\circ\rho\}.$
These automorphisms form the
\emph{left translation group} $L(\Sym_n)$.

Clearly, $\Sym_n\cong L(\Sym_n)$.
Furthermore, since $L(\Sym_n)$
acts regularly on $\Sym_n,$ every automorphism of $\Cay$ is the product of a left
translation by an automorphism fixing $\iota.$

We point out that the automorphism group $\Aut(\Cay)$ contains several nontrivial elements fixing $\iota.$ The reverse map $\g$ is one of them. In fact, \eqref{eq2oct9b} yields
\begin{equation}
\label{eq16aug2015}
\g(\rho\circ\pi)=\g(\rho)\circ\g(\pi),
\end{equation}
 whence if $\rho=\pi\circ\sigma(i,j,k)$ then $\g(\rho)=\g(\pi)\circ\g(\sigma(i,j,k))$ with $\g(\sigma(i,j,k))\in T_n$ by \eqref{eq1oct11}.

Further such automorphisms arise from the toric maps. To show this we need some results about the map
 $\bar{\f}$ defined by
 $$\bar{\f}(\pi)=(\f(\pi^{-1}))^{-1}\qquad \pi\in \Sym_n,$$
and more generally on the map $\bar{\f}_r$ be defined by
\begin{equation}
\label{March29+}
\bar{\f}_r(\pi)=(\f_r(\pi^{-1}))^{-1},\qquad \pi\in \Sym_n.
\end{equation}
In other words,
\begin{equation}\label{eq2jul27}
\bar{\f}_r(\pi)=\rho\Longleftrightarrow[0\rho]= \alpha^{n+1-r}\circ [0\,\pi] \circ \alpha^{(\pi^{-1})_r}.
\end{equation}Then $\bar{\f}_r=\bar{\f}^{\,r}$, as $\f_r=\f^{\,r}$ for any integer $r$ with $0\leq r\leq n$. Hence $\bar{\F}\cong\F,$ where
$\bar{\F}$ is the group generated by $\bar{\f},$ and the natural
mapping $\bar{\f}_{r} \mapsto\f_{r}$ is an isomorphism.

\begin{lemma}
\label{22marchC2015}
Let $\sigma(i,j,k)$ be any block transposition on $[n].$ Then
\begin{equation}
\label{22march2015}
\bar{\f}(\sigma(i,j,k))=\left\{\begin{array}{ll}
\sigma(i-1,j-1,k-1), & i >0 ,\\
\sigma(j-1,k-1,n), & i=0.
\end{array}
\right.
\end{equation}
\end{lemma}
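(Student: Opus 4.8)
The plan is to compute $\bar{\f}(\sigma(i,j,k))$ directly from the defining identity \eqref{March29+}, namely $\bar{\f}(\sigma) = \big(\f(\sigma^{-1})\big)^{-1}$, combining the inversion formula for block transpositions in \eqref{eqa18oct} with the formula for $\f$ on block transpositions already established in Lemma~\ref{lemc19ott2013}. The case $i>0$ should be immediate: by \eqref{eqa18oct} we have $\sigma(i,j,k)^{-1} = \sigma(i,k-j+i,k)$, which still has first cut point $i>0$, so Lemma~\ref{lemc19ott2013} gives $\f(\sigma(i,k-j+i,k)) = \sigma(i-1, k-j+i-1, k-1)$, and applying \eqref{eqa18oct} once more to invert this yields $\sigma(i-1, (k-1)-(k-j+i-1)+(i-1), k-1) = \sigma(i-1,j-1,k-1)$, exactly as claimed. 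This also matches the $i>0$ branch of Lemma~\ref{lemc19ott2013}, which is the expected behavior since $\f$ and $\bar{\f}$ agree on permutations fixing enough initial points.

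The case $i=0$ is where the real work lies, and I would split it according to whether $k=n$ or $k<n$, because the inverse $\sigma(0,j,k)^{-1} = \sigma(0,k-j,k)$ still has first cut point $0$, so to apply Lemma~\ref{lemc19ott2013} I must use its \emph{second} branch, $\f(\sigma(0,j',k)) = \sigma(k-j'-1, n-j', n)$ with $j' = k-j$. That gives $\f(\sigma(0,k-j,k)) = \sigma(j-1, n-k+j, n)$. Now I invert this using \eqref{eqa18oct}: since its first cut point is $j-1$, provided $j-1 \ge 1$ we stay in the ``generic'' inversion case and get $\sigma(j-1, n-(n-k+j)+(j-1), n) = \sigma(j-1,k-1,n)$, which is precisely the asserted $i=0$ value. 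The subtlety is the boundary $j=1$: then $\sigma(0,1,k)^{-1} = \sigma(0,k-1,k)$ and one must check the formula $\f(\sigma(0,k-1,k)) = \sigma(0, n-k+1, n)$ still holds and that inverting it gives $\sigma(0,k-1,n)$; this is consistent with the claimed answer $\sigma(j-1,k-1,n)$ at $j=1$, but it should be verified that none of the cut-point inequalities $0 \le i < j < k \le n$ degenerate.

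An equally clean alternative I would at least mention is to bypass Lemma~\ref{lemc19ott2013} and recompute $\bar{\f}(\sigma(i,j,k))$ from scratch using the coordinate description \eqref{eq2jul27}, i.e. $[0\,\bar{\f}(\sigma)] = \alpha^{n} \circ [0\,\sigma] \circ \alpha^{(\sigma^{-1})_1}$ when $r=1$; here $(\sigma^{-1})_1$ is the position of $1$ in $\sigma$, which by \eqref{cuppoints}/\eqref{eq22ott12} equals $1$ when $i>0$ and equals $k-j+1$ when $i=0$. Substituting and reading off the resulting step function on the intervals \eqref{function}, exactly as in the proof of Lemma~\ref{lemc19ott2013}, reproduces \eqref{22march2015}. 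I expect the main obstacle to be purely bookkeeping: tracking the cut points through one inversion, one application of $\f$, and a second inversion in the $i=0$ case, while making sure the strict inequalities among cut points are preserved at each step (in particular confirming $k-j+1 > 1$, i.e. $j<k$, so the exponent $(\sigma^{-1})_1$ lands where Lemma~\ref{lemc19ott2013}'s $i=0$ branch applies). No conceptual difficulty is anticipated beyond this careful case analysis.
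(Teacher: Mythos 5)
Your proposal is correct and takes essentially the same route as the paper: both compute $\bar{\f}(\sigma)=(\f(\sigma^{-1}))^{-1}$ from \eqref{March29+} using the inversion formula \eqref{eqa18oct} and Lemma~\ref{lemc19ott2013}, and your $i=0$ computation is word-for-word the paper's. The only cosmetic differences are that for $i>0$ the paper shortcuts via \eqref{lem3oct11} (since $\sigma_1=1$ gives $\bar{\f}(\sigma)=\f(\sigma)$ directly) rather than your explicit invert--apply--invert calculation, and your worry about a separate $j=1$ inversion case is unnecessary because \eqref{eqa18oct} holds uniformly for all cut points.
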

\begin{proof}
Let $\sigma=\sigma(i,j,k).$ For $i>0,$ we obtain $\sigma_1=1$ from \eqref{eq22ott12}. Therefore, $\bar{\f}(\sigma)=
\f(\sigma)$
by  \eqref{lem3oct11} and \eqref{March29+}. Hence the statement for $i>0$ follows from Lemma~\ref{lemc19ott2013}.

Now, suppose $i=0.$ By \eqref{March29+} and Lemma~\ref{lemc19ott2013},
$$\bar{\f}(\sigma)=(\f(\sigma^{-1}))^{-1}=(\f(\sigma(0,k-j,k)))^{-1}=\sigma(j-1,n-(k-j),n)^{-1}$$
which is equal to $\sigma(j-1,k-1,n),$ by \eqref{eqa18oct}. Therefore, the statement also holds for $i=0.$
\end{proof}

Since $\bar{\f}_r$ is a power of $\bar{\f}$ for every $2\leq r\leq n$,
Lemma~\ref{22marchC2015} has the following corollary.

\begin{corollary}
\label{cor1July29}
$T_n$ is invariant under the action of $\bar{\F}.$
\end{corollary}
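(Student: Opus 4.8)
The plan is to deduce Corollary~\ref{cor1July29} directly from Lemma~\ref{22marchC2015}, exactly as the preceding Corollary~\ref{th1} was deduced from Lemma~\ref{lemc19ott2013}. The key observation, already recorded in the text just above the statement, is that $\bar{\f}_r = \bar{\f}^{\,r}$ for every integer $r$ with $0\le r\le n$, so the cyclic group $\bar{\F}$ is generated by the single map $\bar{\f}$. Consequently it suffices to prove that $\bar{\f}(T_n)\subseteq T_n$: once the generator preserves $T_n$, every power of it does too, and since $\bar{\F}$ is finite this gives $\bar{\f}^{\,r}(T_n)=T_n$ for all $r$, i.e. $T_n$ is invariant under the whole group.

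First I would invoke Lemma~\ref{22marchC2015}: for an arbitrary block transposition $\sigma(i,j,k)\in T_n$, the lemma exhibits $\bar{\f}(\sigma(i,j,k))$ explicitly as either $\sigma(i-1,j-1,k-1)$ (when $i>0$) or $\sigma(j-1,k-1,n)$ (when $i=0$). In each case I must check that the displayed triple is again a legitimate cut-point triple, that is, that it satisfies $0\le i'<j'<k'\le n$. For $i>0$ we have $1\le i<j<k\le n$, hence $0\le i-1<j-1<k-1\le n-1<n$, so $\sigma(i-1,j-1,k-1)\in T_n$. For $i=0$ we have $0<j<k\le n$, hence $0\le j-1<k-1$ and $k-1\le n-1<n$, so $\sigma(j-1,k-1,n)\in T_n$. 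Thus $\bar{\f}(\sigma(i,j,k))\in T_n$ in both cases, which establishes $\bar{\f}(T_n)\subseteq T_n$.

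Finally I would close the argument by the elementary group-theoretic remark: since $\bar{\F}=\langle\bar{\f}\rangle$ is a finite cyclic group acting on the finite set $\Sym_n$ and $\bar{\f}(T_n)\subseteq T_n$ with $|\bar{\f}(T_n)|=|T_n|$ (as $\bar{\f}$ is a bijection of $\Sym_n$), we get $\bar{\f}(T_n)=T_n$, whence $\bar{\f}^{\,r}(T_n)=T_n$ for every $r$, and therefore every element of $\bar{\F}$ maps $T_n$ onto itself. This is precisely the assertion of the corollary.

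There is essentially no obstacle here: the corollary is a formal consequence of the lemma together with the fact that $\bar{\F}$ is generated by $\bar{\f}$, and the only thing to verify is the trivial inequality-preservation of cut points under the two substitution rules in \eqref{22march2015}. The proof is a two-line affair, parallel in structure to the derivation of Corollary~\ref{th1} from Lemma~\ref{lemc19ott2013}.
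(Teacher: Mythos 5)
Your proposal is correct and follows essentially the same route as the paper: the text deduces the corollary from Lemma~\ref{22marchC2015} using precisely the fact that every $\bar{\f}_r$ is a power of $\bar{\f}$, exactly as you do. Your extra verification that the new cut points satisfy $0\le i'<j'<k'\le n$ is a harmless elaboration of what the paper leaves implicit.
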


\begin{lemma}\label{mat27oct}
For every $\pi,\rho \in {\Sym}_n,$
$$\bar{\f}_r(\rho\circ\pi)=\bar{\f}_r(\rho)\circ\bar{\f}_s(\pi),$$
where $s=(\rho^{-1})_r;$
\end{lemma}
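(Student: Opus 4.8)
The plan is to verify the cocycle-type identity $\bar{\f}_r(\rho\circ\pi)=\bar{\f}_r(\rho)\circ\bar{\f}_s(\pi)$ with $s=(\rho^{-1})_r$ directly from the defining relation \eqref{eq2jul27}, using the fact that $\bar{\f}_r$ was built precisely to conjugate the ordinary toric map $\f_r$ through inversion. First I would rewrite everything in terms of $\f_r$ via \eqref{March29+}: since $\bar{\f}_r(\tau)=(\f_r(\tau^{-1}))^{-1}$ for all $\tau$, the claimed identity is equivalent, after taking inverses of both sides and replacing $\rho\circ\pi$ by its inverse $\pi^{-1}\circ\rho^{-1}$, to
\begin{equation}
\label{pl1}
\f_r(\pi^{-1}\circ\rho^{-1})=\f_s(\pi^{-1})\circ\f_r(\rho^{-1}),
\end{equation}
where $s=(\rho^{-1})_r=\pi^{-1}(\pi(\rho^{-1})_r)$; setting $a=\rho^{-1}$ and $b=\pi^{-1}$ this reads $\f_r(b\circ a)=\f_{a_r}(b)\circ\f_r(a)$, which is a purely $\f$-side statement about how the (non-homomorphic) toric maps interact with composition.

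Next I would prove \eqref{pl1} by unwinding \eqref{eq2oct9}, which says $[0\,\f_r(\tau)]=\alpha^{n+1-\tau_r}\circ[0\,\tau]\circ\alpha^{r}$. Applying this to $\tau=b\circ a$ gives $[0\,\f_r(b\circ a)]=\alpha^{\,n+1-(b\circ a)_r}\circ[0\,b\circ a]\circ\alpha^{r}$, and since $(b\circ a)_r=b_{a_r}$ and, on the extended ground set, $[0\,b\circ a]=[0\,b]\circ[0\,a]$ (both fix $0$), this is $\alpha^{\,n+1-b_{a_r}}\circ[0\,b]\circ[0\,a]\circ\alpha^{r}$. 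On the other side, $[0\,\f_{a_r}(b)]=\alpha^{\,n+1-b_{a_r}}\circ[0\,b]\circ\alpha^{a_r}$ and $[0\,\f_r(a)]=\alpha^{\,n+1-a_r}\circ[0\,a]\circ\alpha^{r}$, so the composite $\f_{a_r}(b)\circ\f_r(a)$ has $[0\,\cdot]$ equal to $\alpha^{\,n+1-b_{a_r}}\circ[0\,b]\circ\alpha^{a_r}\circ\alpha^{\,n+1-a_r}\circ[0\,a]\circ\alpha^{r}$. The middle factor $\alpha^{a_r}\circ\alpha^{n+1-a_r}=\alpha^{n+1}=\mathrm{id}$ collapses, leaving exactly $\alpha^{\,n+1-b_{a_r}}\circ[0\,b]\circ[0\,a]\circ\alpha^{r}$, matching the left-hand side. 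This is essentially the same telescoping computation already used in the excerpt to show $\f_s\circ\f_r=\f_{s+r}$ and to verify \eqref{eqoct15a}, so it should go through cleanly.

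Finally I would translate back: taking inverses in \eqref{pl1} and using $(\f_r(\tau))^{-1}=\bar{\f}_r(\tau^{-1})$ — which is just \eqref{March29+} read with $\tau^{-1}$ in place of $\tau$ — the right-hand side becomes $(\f_r(a))^{-1}\circ(\f_{a_r}(b))^{-1}=\bar{\f}_r(a^{-1})\circ\bar{\f}_{a_r}(b^{-1})=\bar{\f}_r(\rho)\circ\bar{\f}_{a_r}(\pi)$, while the left-hand side becomes $(\f_r(b\circ a))^{-1}=\bar{\f}_r((b\circ a)^{-1})=\bar{\f}_r(\rho\circ\pi)$. It remains only to identify the index $a_r=(\rho^{-1})_r$ with the stated $s$, which is immediate. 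The one point needing a word of care is the index bookkeeping modulo $n+1$: all exponents of $\alpha$ live in $\Z/(n+1)\Z$ by \eqref{alpha_powers}, and one must check that $(b\circ a)_r=b_{a_r}$ is the correct composition convention on the extended permutations $[0\,\cdot]$ and that $a_r$, a priori an element of $[n]$, is the right exponent — but since $a=\rho^{-1}\in\Sym_n$ maps $[n]$ to $[n]$ this is unproblematic. I expect this indexing verification, rather than any structural difficulty, to be the only mild obstacle.
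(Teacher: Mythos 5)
Your proof is correct, and at its core it is the same computation as the paper's: insert a cancelling pair $\alpha^{s}\circ\alpha^{n+1-s}$ (with $s=(\rho^{-1})_r$) in the middle of the factorization of $[0\,\cdot]$ and recognize the two resulting blocks via the defining relation. The only structural difference is where you run it: the paper works directly with $\bar{\f}_r$ through \eqref{eq2jul27}, writing $[0\,\bar{\f}_r(\rho\circ\pi)]=\alpha^{n+1-r}\circ[0\,\rho]\circ[0\,\pi]\circ\alpha^{t_r}$ with $t=\pi^{-1}\circ\rho^{-1}$ and splitting it in one line, whereas you first transfer the statement through \eqref{March29+} to the un-barred maps, prove the dual identity $\f_r(b\circ a)=\f_{a_r}(b)\circ\f_r(a)$ from \eqref{eq2oct9}, and then invert back. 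Your detour costs two routine inversion steps (and the small index checks you flag, $(b\circ a)_r=b_{a_r}$ and $[0\,b\circ a]=[0\,b]\circ[0\,a]$, which are fine), but it isolates an $\f$-side cocycle identity that is of some independent interest; the paper's direct route is marginally shorter because \eqref{eq2jul27} already packages the inverse into the exponent of the right-hand $\alpha$. Either way the index bookkeeping mod $n+1$ works out exactly as you describe, so there is no gap.
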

\begin{proof}
From \eqref{eq2jul27}, $\bar{\f}_r(\rho\circ\pi)=\mu$ with
$t=\pi^{-1}\circ\rho^{-1}$ and
$$\begin{array}{lll}
[0\,\mu]&=& \alpha^{n+1-r}\circ [0\,\rho] \circ [0\,\pi]\circ \alpha^{t_r}\\
{}&=&\alpha^{n+1-r}\circ [0\,\rho]\circ\alpha^{(\rho^{-1})_r} \circ\alpha^{n+1-(\rho^{-1})_r}\circ [0\,\pi]\circ \alpha^{t_r}.
\end{array}$$ The  assertion follows from \eqref{eq2jul27}.
\end{proof}

Now, we transfer our terminology from Sect.~\ref{sec:3}. In particular,
$\bar{\f}$ and its powers are the \emph{toric maps}, and
$\bar{\F}$ is the \emph{toric group}.

\begin{proposition}\label{prop10oct}
The toric maps and the reverse map are in the automorphism group\\
$\Aut(\Cay)$.
\end{proposition}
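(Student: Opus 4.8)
The plan is to show that each of the maps $\bar{\f}_r$ (for $r=1,\ldots,n$) and $\g$ is a graph automorphism of $\Cay$, i.e. a bijection of the vertex set $\Sym_n$ that sends edges to edges. Since the edge set is determined by right multiplication by elements of $T_n$, it suffices to prove that if $\rho=\pi\circ\sigma$ with $\sigma\in T_n$, then the image of $\rho$ equals the image of $\pi$ right-multiplied by some element of $T_n$. For the reverse map this is immediate: equation~\eqref{eq16aug2015} gives $\g(\rho)=\g(\pi)\circ\g(\sigma)$, and $\g(\sigma)\in T_n$ by Corollary~\ref{th1} (or directly by \eqref{eq1oct11}); bijectivity is clear since $\g$ is an involution. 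So the substance of the proposition is the toric maps.

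For a single toric map $\bar{\f}=\bar{\f}_1$, I would combine Lemma~\ref{mat27oct} with Corollary~\ref{cor1July29}. Suppose $\{\pi,\rho\}$ is an edge, so $\rho=\pi\circ\sigma$ for some $\sigma\in T_n$. Applying Lemma~\ref{mat27oct} with $r=1$ gives $\bar{\f}(\rho)=\bar{\f}_1(\pi\circ\sigma)=\bar{\f}_1(\pi)\circ\bar{\f}_s(\sigma)$ where $s=(\pi^{-1})_1$. Now $\bar{\f}_s(\sigma)=\bar{\f}^{\,s}(\sigma)$, and since $T_n$ is invariant under $\bar{\F}$ by Corollary~\ref{cor1July29}, we get $\bar{\f}_s(\sigma)\in T_n$. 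Hence $\{\bar{\f}(\pi),\bar{\f}(\rho)\}$ is again an edge. Bijectivity of $\bar{\f}$ follows because $\bar{\f}\in\bar{\F}$, which is a (finite cyclic) group of bijections of $\Sym_n$; alternatively $\bar{\f}(\pi)=(\f(\pi^{-1}))^{-1}$ is a composition of the bijection $\f$ with inversion. The same argument applied verbatim to $\bar{\f}_r$ in place of $\bar{\f}$ (using Lemma~\ref{mat27oct} with the given $r$) shows every toric map is an automorphism; or one simply notes that $\bar{\f}_r=\bar{\f}^{\,r}$ and the automorphisms of a graph form a group, so powers of $\bar{\f}$ are automorphisms once $\bar{\f}$ is.

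The only genuinely delicate point is bookkeeping the index shift in Lemma~\ref{mat27oct}: the second factor is $\bar{\f}_s(\sigma)$ with $s=(\pi^{-1})_r$ depending on $\pi$, not $\bar{\f}_r(\sigma)$, so one must be careful that it is still the $\bar{\F}$-invariance of $T_n$ (Corollary~\ref{cor1July29}) that is being invoked and not invariance under the single map $\bar{\f}_r$ — but since $\bar{\f}_s$ is a power of $\bar{\f}$ for every $s$, Corollary~\ref{cor1July29} covers exactly this. I expect this to be essentially the whole proof; there is no real obstacle beyond stating the combination cleanly.

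\begin{proof}
By \eqref{eq16aug2015}, if $\rho=\pi\circ\sigma(i,j,k)$ then $\g(\rho)=\g(\pi)\circ\g(\sigma(i,j,k))$, and $\g(\sigma(i,j,k))\in T_n$ by \eqref{eq1oct11}; since $\g$ is an involution on $\Sym_n$, it is a bijection, and hence an automorphism of $\Cay$.

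Now let $r$ be an integer with $1\le r\le n$ and let $\{\pi,\rho\}$ be an edge of $\Cay$, say $\rho=\pi\circ\sigma$ with $\sigma\in T_n$. By Lemma~\ref{mat27oct},
$$\bar{\f}_r(\rho)=\bar{\f}_r(\pi\circ\sigma)=\bar{\f}_r(\pi)\circ\bar{\f}_s(\sigma),\qquad s=(\pi^{-1})_r.$$
Since $\bar{\f}_s=\bar{\f}^{\,s}$, Corollary~\ref{cor1July29} gives $\bar{\f}_s(\sigma)\in T_n$, so $\{\bar{\f}_r(\pi),\bar{\f}_r(\rho)\}$ is again an edge. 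Moreover $\bar{\f}_r\in\bar{\F}$, a finite group of bijections of $\Sym_n$, so $\bar{\f}_r$ is a bijection of the vertex set. Hence $\bar{\f}_r$ is an automorphism of $\Cay$ for every such $r$, and together with $\g$ this proves the claim.
\end{proof}
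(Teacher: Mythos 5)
Your proof is correct and follows essentially the same route as the paper: handle $\g$ via \eqref{eq16aug2015} and \eqref{eq1oct11}, then apply Lemma~\ref{mat27oct} to an edge $\rho=\pi\circ\sigma$ and use the $\bar{\F}$-invariance of $T_n$ (Corollary~\ref{cor1July29}) to absorb the shifted index $s=(\pi^{-1})_r$. The only cosmetic difference is that the paper argues for $\bar{\f}$ alone and then deduces the other toric maps as its powers, while you run the same argument uniformly for each $\bar{\f}_r$.
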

\begin{proof}It has already been shown  that
the reverse map $\g$ is in $\Aut(\Cay),$ see \eqref{eq16aug2015}. 
Here, we deal with the toric maps.
Let $\pi,\rho \in {\Sym}_n$ be any two adjacent vertices of $\Cay.$ Then, $\rho=\pi\circ\sigma$ for some $\sigma=\sigma(i,j,k)\in T_n.$ From this $\bar{\f}(\rho)=\bar{\f}(\pi)\circ\bar{\f}_{(\pi^{-1})_1}(\sigma)$ by Lemma~\ref{mat27oct}. Therefore, the assertion for $\bar{\f}$ follows from Corollary~\ref{cor1July29}.
This implies that every toric map is in $\Aut(\Cay)$.
\end{proof}

From (\ref{eqoct15a}), for every integer $r$ with $0\le r\le n,$
\begin{equation}
\label{eq1July30}
\g\circ\bar{\f}_r\circ\g=\bar{\f}_{n+1-r}.
\end{equation}
Therefore, the set consisting of $\bar{\F}$ and its coset
$\bar{\F}\circ\g$ is a dihedral group of order $2(n+1).$
Clearly, every element of this group fixes $\iota.$ We denote this dihedral group by
$\D_{n+1}$, and from now on, the term of \emph{toric-reverse group} stands for $\textsf{D}_{n+1}$.
Now, Proposition~\ref{prop10oct} has the following corollary.

\begin{corollary}\label{teor1}
The automorphism group of $\Cay$ contains a dihedral subgroup of order $2(n+1)$ fixing the identity permutation.
\end{corollary}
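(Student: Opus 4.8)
The plan is to assemble Corollary~\ref{teor1} directly from Proposition~\ref{prop10oct} and the conjugation formula \eqref{eq1July30}, so that the work is organizational rather than computational. First I would record the two structural facts already available: $\bar{\F}$, being isomorphic to $\F$, is cyclic of order exactly $n+1$, and $\g$ is an involution. Next, specializing \eqref{eq1July30} to $r=1$ gives $\g\circ\bar{\f}\circ\g=\bar{\f}_{n}=\bar{\f}^{-1}$, so $\g$ normalizes $\bar{\F}$ and acts on its generator by inversion. Consequently the subgroup $\langle\bar{\F},\g\rangle$ of $\Sym_{\Sym_n}$ is a homomorphic image of the dihedral group of order $2(n+1)$, and it is precisely the set $\D_{n+1}=\bar{\F}\cup(\bar{\F}\circ\g)$ introduced just before the corollary.

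To see that this image has full order $2(n+1)$ — i.e.\ that $\D_{n+1}$ is genuinely dihedral and not degenerate — I would argue that $\g\notin\bar{\F}$. Indeed, if $\g$ were in $\bar{\F}$, then $\bar{\F}$ being abelian would force $\bar{\f}=\g\circ\bar{\f}\circ\g=\bar{\f}^{-1}$, hence $\bar{\f}^{2}=\mathrm{id}$ and $n+1\le 2$, contrary to $n\ge 2$ (the cases $n\le 3$ being trivial and excluded from the discussion). Thus the cosets $\bar{\F}$ and $\bar{\F}\circ\g$ are disjoint, $|\D_{n+1}|=2(n+1)$, and $\D_{n+1}$ is dihedral.

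It remains to check that $\D_{n+1}$ sits inside $\Aut(\Cay)$ and fixes $\iota$. The first is immediate from Proposition~\ref{prop10oct}: each toric map $\bar{\f}_{r}$ and the reverse map $\g$ belongs to $\Aut(\Cay)$, and since $\Aut(\Cay)$ is a group it contains $\langle\bar{\F},\g\rangle=\D_{n+1}$. For the second, it suffices to verify that the two generators fix $\iota$: from \eqref{eq2jul27}, since $(\iota^{-1})_1=1$, the image $\bar{\f}(\iota)$ is read off from $\alpha^{n}\circ[0\,\iota]\circ\alpha=\alpha^{n+1}=\mathrm{id}$ on $[n]^{0}$, so $\bar{\f}(\iota)=\iota$; and from \eqref{eq9octb}, $(\g(\iota))_t=n+1-\iota_{n+1-t}=t$, so $\g(\iota)=\iota$. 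Hence every element of $\D_{n+1}$ fixes $\iota$, which completes the plan.

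I do not anticipate a genuine obstacle here, as the statement is a packaging of facts established earlier; the only point that warrants a moment's care is the non-degeneracy claim $|\D_{n+1}|=2(n+1)$, which, as indicated above, reduces to the elementary observation that $\bar{\f}$ has order greater than $2$ whenever $n\ge 2$.
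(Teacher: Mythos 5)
Your proposal is correct and follows essentially the same route as the paper: the paper derives the corollary by combining Proposition~\ref{prop10oct} with the observation (from \eqref{eq1July30}) that $\bar{\F}\cup(\bar{\F}\circ\g)$ is a dihedral group of order $2(n+1)$ whose elements fix $\iota$. You merely make explicit two points the paper treats as clear, namely $\g\notin\bar{\F}$ (so the group has full order $2(n+1)$) and the verification that $\bar{\f}$ and $\g$ fix $\iota$, and these checks are accurate.
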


We conclude this section by showing that the product
$L(\Sym_n)\D_{n+1}$ is in fact a group, which is
isomorphic to the direct product $\Sym_{n+1} \times \Z_2$.

\begin{proposition}\label{29oct}
The product of the left
translation group by the
toric-reverse group is a group 
isomorphic to the direct product of ${\Sym}_{n+1}$ by a group of order $2.$
\end{proposition}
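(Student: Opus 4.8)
The plan is to realise the permutation group $L(\Sym_n)\D_{n+1}$, acting on the vertex set $\Sym_n$, as a more transparent permutation group acting on a coset space of $\Sym_{n+1}$, and to read off both the group property and the isomorphism type from that model. First I would observe that $L(\Sym_n)\cap\D_{n+1}=\{1\}$: every element of $\D_{n+1}$ fixes $\iota$, while, $L(\Sym_n)$ acting regularly, the only left translation fixing $\iota$ is the identity. Hence the product \emph{set} $G:=L(\Sym_n)\D_{n+1}$ has exactly $|L(\Sym_n)|\cdot|\D_{n+1}|=n!\cdot 2(n+1)=2(n+1)!$ elements, and it suffices to exhibit a group of this order that contains $G$ and is isomorphic to $\Sym_{n+1}\times\Z_2$.

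For the model, let $\alpha=[1\,2\cdots n\,0]$, regarded as the $(n+1)$-cycle $(0\,1\cdots n)$ in $\Sym_{[n]^0}=\Sym_{n+1}$, put $C=\langle\alpha\rangle$, and let $\omega=[0\,w]$. Since $C$ acts regularly on $[n]^0$, each left coset $xC$ contains a unique permutation fixing $0$, so
$$\Phi\colon\ \Sym_n\ \longrightarrow\ \Sym_{n+1}/C,\qquad \Phi(\pi)=[0\,\pi]\,C,$$
is a bijection (the composite of the standard identification $\Sym_n\cong\mathrm{Stab}_{\Sym_{n+1}}(0)$ with $s\mapsto sC$). Next I would transport the three generating families of $\langle L(\Sym_n),\D_{n+1}\rangle$ through $\Phi$, using the identities already recorded in Section~\ref{sec:3}. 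From $[0,\,h\circ\pi]=[0\,h]\circ[0\,\pi]$ one gets that $\Phi L_h\Phi^{-1}$ is left multiplication by $[0\,h]$. From \eqref{eq2jul27}, $[0,\bar{\f}_r(\pi)]=\alpha^{n+1-r}\circ[0\,\pi]\circ\alpha^{(\pi^{-1})_r}$, and since the right-hand factor $\alpha^{(\pi^{-1})_r}$ disappears modulo $C$, $\Phi\bar{\f}_r\Phi^{-1}$ is left multiplication by $\alpha^{-r}$; in particular $\bar{\f}$ becomes left multiplication by $\alpha^{-1}$. From \eqref{eq2oct9b}, $[0,\g(\pi)]=\omega\circ[0\,\pi]\circ\omega$, and since $\omega$ normalises $C$ (indeed $\omega\circ\alpha^r\circ\omega=\alpha^{n+1-r}$, cf.\ \eqref{eqoct15a}), $\Phi\g\Phi^{-1}$ is left multiplication by $\omega$ followed by right multiplication by $\omega$, the latter being well defined on $\Sym_{n+1}/C$.

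Finally I would identify the group generated by the transported families inside $\Sym(\Sym_{n+1}/C)$. The left multiplications by $\mathrm{Stab}_{\Sym_{n+1}}(0)$ together with left multiplication by $\alpha$ generate the group $L^{*}$ of all left multiplications by elements of $\langle\mathrm{Stab}_{\Sym_{n+1}}(0),\alpha\rangle=\Sym_{n+1}$; moreover the left‑multiplication action of $\Sym_{n+1}$ on $\Sym_{n+1}/C$ is faithful for $n\ge 3$, because its kernel is the largest normal subgroup of $\Sym_{n+1}$ contained in the cyclic (hence abelian) group $C$, which is trivial in that range. Thus $L^{*}\cong\Sym_{n+1}$. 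Right multiplication by $\omega$ is an involution $\eta$ that commutes with every left multiplication and has $\eta\notin L^{*}$ (an element of $L^{*}\cong\Sym_{n+1}$ centralising all of $L^{*}$ would be trivial, whereas $\eta$ is nontrivial since $\omega\notin C$). Since $\omega\in\Sym_{n+1}$ already lies in $L^{*}$, adjoining the transported $\g$ is the same as adjoining $\eta$, so the transported families generate $L^{*}\langle\eta\rangle\cong\Sym_{n+1}\times\Z_2$, a group of order $2(n+1)!$. Pulling this back through $\Phi$, the group $\langle L(\Sym_n),\D_{n+1}\rangle$ has order $2(n+1)!$ and is isomorphic to $\Sym_{n+1}\times\Z_2$; as it contains the set $G$, which already has $2(n+1)!$ elements, it equals $G$. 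Therefore $G=L(\Sym_n)\D_{n+1}$ is a group isomorphic to $\Sym_{n+1}\times\Z_2$.

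The parts I have skipped are the purely computational verifications of the three transport formulas; the one point that needs genuine care is the faithfulness of the $\Sym_{n+1}$‑action on $\Sym_{n+1}/C$, which is where the assumption that $n$ is not too small is used (the cases $n\le 2$ degenerate, in keeping with the paper's convention that small $n$ is treated separately).
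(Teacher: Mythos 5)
Your proof is correct, and it reaches the conclusion by a recognizably different route than the paper. The paper argues internally: it exhibits the involution $t=L_w\circ\g$ (acting as $\pi\mapsto\pi\circ w$), checks by direct computation with \eqref{eq2jul27} and \eqref{eqoct15a} that $t$ centralizes $L(\Sym_n)\bar{\F}$ and lies outside it, proves that $L(\Sym_n)\bar{\F}$ is a group via the closure relation coming from Lemma~\ref{mat27oct}, and finally defines the map $L_h\circ\bar{\f}^{\,r}\mapsto[0\,h]\circ\alpha^{n+1-r}$ and verifies by hand that it is an injective homomorphism onto $\Sym_{[n]^0}$. You instead conjugate the whole configuration onto the coset space $\Sym_{n+1}/\langle\alpha\rangle$ via $\pi\mapsto[0\,\pi]\langle\alpha\rangle$: there $L(\Sym_n)$ and $\bar{\F}$ become left multiplications generating a faithful copy of $\Sym_{n+1}$, the reverse map becomes left-times-right multiplication by $\omega=[0\,w]$, and your commuting involution $\eta$ (right multiplication by $\omega$ on cosets) is exactly the transport of the paper's $t$; the group property of the product set then drops out of the order count $|L(\Sym_n)|\cdot|\D_{n+1}|/|L(\Sym_n)\cap\D_{n+1}|=2(n+1)!$ rather than from a separate closure argument. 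What your version buys is that the homomorphism property and the inclusion $\bar{\F}L(\Sym_n)\subseteq L(\Sym_n)\bar{\F}$ never need explicit verification -- they are absorbed into the well-definedness of the coset action -- at the cost of two standard facts you assert briefly and should perhaps spell out: that $\langle\mathrm{Stab}_{\Sym_{[n]^0}}(0),\alpha\rangle=\Sym_{[n]^0}$ (immediate, since the subgroup is transitive and contains a point stabilizer), and that the core of $\langle\alpha\rangle$ in $\Sym_{[n]^0}$ is trivial for $n\geq 3$ (no nontrivial normal subgroup of $\Sym_{[n]^0}$ fits inside a cyclic group of order $n+1$, the case $n+1=4$ being excluded because $V_4$ is not cyclic). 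The three transport formulas you omit are indeed routine consequences of \eqref{eq2jul27}, \eqref{eq2oct9b}, and \eqref{eqoct15a}, so no essential gap remains.
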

\begin{proof} First, we find an involution $t$ contained in
$L(\Sym_n)\D_{n+1}$ that centralizes
$L(\Sym_n)
\bar{\F}.$ Two automorphisms of $\Cay$ arise from the reverse permutation, namely $\g$ and the  left
translation $L_w$. We choose $t$ to be the
automorphism $L_w\circ\g$, and hence
$t$ takes $\pi$ to $\pi\circ w.$
Obviously, $t \in L(\Sym_n)\D_{n+1},$ and it is an involution as
$\g$ and $L_w$ are commuting
involutions.

Now, we show that $t$ centralizes $L(\Sym_n)
\bar{\F}$. In order to do that, it suffices to prove $t\circ\bar{\f}=\bar{\f}\circ t$ since $t$ commutes with any left
translation. Now, for every $\pi\in \Sym_n,$
$$(\bar{\f}\circ t)(\pi)=\rho\Longleftrightarrow [0\,\rho]=\alpha^n\circ[0\,\pi]\circ[0\,w]\circ\alpha^{s},$$where
$s=[(\pi\circ w)^{-1}]_1= n+1-(\pi^{-1})_1$ by \eqref{eq2jul27}. On the other hand,
$$(t \circ\bar{\f})(\pi)=\rho' \Longleftrightarrow [0\,\rho']=\alpha^n\circ[0\,\pi]\circ\alpha^{(\pi^{-1})_1}
\circ[0\, w].$$Since $[0\, w]\circ\alpha^{n+1-(\pi^{-1})_{1}}=\alpha^{(\pi^{-1})_1}\circ[0\, w]$ by \eqref{eqoct15a}, this yields
that $t$ commutes with $\bar{\F}$.

Now, we show that $t$ is not contained in
$L(\Sym_n)
\bar{\F}.$ Suppose on the contrary that there exists some left
translation $L_h$ such that $t=L_h\circ\bar{\f}^{\,r}$ with
$0\le r \le n.$
Then it follows  $w=\iota\circ w=t(\iota)=
(L_h\circ\bar{\f}^{\,r})(\iota)=L_h(\iota)=h\circ\iota=h$.
Therefore, $t=L_w\circ\bar{\f}^{\,r},$ and hence
$\g=L_w\circ t=\bar{\f}^{\,r}$.
This implies that $\g$ commutes with every toric
map in $\bar{\F},$ which is in contradiction with  \eqref{eq1July30}.

By Lemma~\ref{mat27oct}, for every $h,\pi\in\Sym_n,$
$(\bar{\f}_r\circ L_h)(\pi)=\bar{\f}_{r}(h\circ\pi)=(L_{h'}\circ \bar{\f}_{s})(\pi),$
where $h'=\bar{\f}_{r}(h)$ and $s=(h^{-1})_r$. This yields
$\bar{\F}L(\Sym_n) \subseteq L(\Sym_n)\bar{\F},$  and thus the product $L(\Sym_n)\bar{\F}$ is indeed a group.
As $\g\circ t^{-1}= L_w\in L(\Sym_n)\bar{\F}$, the coset
$(L(\Sym_n)\bar{\F})\circ t$ is equal to the coset $(L(\Sym_n)\bar{\F})\circ\g$. Therefore,
$L(\Sym_n)\D_{n+1}=L(\Sym_n)(\bar{\F}
\langle\,\g\,\rangle)=(L(\Sym_n)\bar{\F})
\langle\,\g\,\rangle=(L(\Sym_n)\bar{\F})\langle\, t\,\rangle=
L(\Sym_n)\bar{\F}\times\langle\, t\,\rangle$.

In fact, a straightforward computation shows that
$$L_h\circ\bar{\f}^{-r}=L_{h'}\circ\bar{\f}^{\,r}\circ L_w,$$where
$h=h'\circ w,$ for any right translation $L_h$ and $0\le r\le n.$

To prove the isomorphism $L(\Sym_n)
\bar{\F} \cong \Sym_{n+1},$ let $\Phi$ be the map that takes
$L_h\circ\bar{\f}^{\,r}$ to $[0\,h]\circ\alpha^{n+1-r}.$
For any $h,k,\pi\in\Sym_n$ and $0\leq r,u \le n$, by Lemma~\ref{mat27oct}, 
$$
(L_h\circ\bar{\f}_r\circ L_k\circ\bar{\f}_u)(\pi)=
(L_h\circ\bar{\f}_r)(k\circ\bar{\f}_u(\pi))=
h\circ\bar{\f}_{r}(k)\circ\bar{\f}_{u+(k^{-1})_r}(\pi).
$$
This shows that $L_h\circ\bar{\f}_r\circ L_k\circ\bar{\f}_u=L_d\circ\bar{\f}_{u+(k^{-1})_r}$ with $d=h\circ\bar{\f}_r(k)$ and
$L_d\in  L(\Sym_n).$

Then,
$$\begin{array}{lll}
\Phi(L_h\circ\bar{\f}_r\circ L_k\circ\bar{\f}_u)&=&[0\,h]\circ[0\,\bar{\f}_r(k)]\circ\alpha^{n+1-u-(k^{-1})_r}\\
{}&=&[0\,h]\circ\alpha^{n+1-r}\circ[0\,k]\circ\alpha^{n+1-u}.
\end{array}$$On the other hand,
$$\Phi(L_h\circ\bar{\f}_r)\circ\Phi(L_k\circ\bar{\f}_u)=[0\,h]\circ\alpha^{n+1-r}\circ[0\,k]\circ\alpha^{n+1-u}.$$Hence, $\Phi$ is a group homomorphism from
$L(\Sym_n)
\bar{\F}$ into the symmetric group on $[n]^0$. Furthermore,
$\ker(\Phi)$ is trivial. In fact, $[0\,h]\circ \alpha^{n+1-r}=[0\,\iota]$ only occurs for $h=\iota$ and $r=0$ since the inverse of $\alpha^{n+1-r}$ is the permutation $\alpha^{r}$ not fixing $0$
whenever $r \ne 0$. This together with
$| L(\Sym_n)
\bar{\F}|=(n+1)!$ shows that $\Phi$ is bijective.
\end{proof}

\section{Combinatorial properties of the block transposition graph
$\boldsymbol{\Gamma}$}
\label{sec:5}

As mentioned in the introduction, a crucial step
to find $\Aut(\Cay)$ is 
the subgraph of $\Cay$ induced by the set $T_n$. We call this subgraph
the \emph{block transposition graph,} and denote it by $\Gamma$.
In this section we derive several combinatorial properties of the latter
graph.

We begin with some results on the components of the partition in Lemma~\ref{lem1oct9}.
Since $B$ consists of all nontrivial elements of a subgroup of $T_n$ of order $n,$ the block transpositions in $B$ are the vertices of a complete graph  of size $n-1.$ Lemma~\ref{lem1oct9} and \eqref{eq1oct11} give the following property.

\begin{corollary}\label{lem2oct11}
The reverse map preserves both $B$ and $S_{n-2}^\vartriangle$ while it switches $L$ and $F$.
\end{corollary}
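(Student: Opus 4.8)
The plan is to verify the claim by a direct computation, using the explicit action of the reverse map on block transpositions recorded in \eqref{eq1oct11}, namely $\g(\sigma(i,j,k))=\sigma(n-k,n-j,n-i)$, together with the descriptions of the four blocks of the partition of $T_n$ from Lemma~\ref{lem1oct9}. Recall that $B=\{\sigma(0,j,n)\mid 1\le j\le n-1\}$ (the nontrivial powers of $\beta=\sigma(0,1,n)$), that $L$ consists of all $\sigma(0,j,k)$ with $k\neq n$, that $F$ consists of all $\sigma(i,j,n)$ with $i\neq 0$, and that $S_{n-2}^\vartriangle$ consists of all $\sigma(i,j,k)$ with $i\neq 0$ and $k\neq n$. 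Since $\g$ is an involution (see the remark following \eqref{eq2oct9b}), it suffices to establish the four inclusions $\g(B)\subseteq B$, $\g(S_{n-2}^\vartriangle)\subseteq S_{n-2}^\vartriangle$, $\g(L)\subseteq F$ and $\g(F)\subseteq L$; the reverse inclusions, and hence the equalities $\g(B)=B$, $\g(S_{n-2}^\vartriangle)=S_{n-2}^\vartriangle$, $\g(L)=F$ and $\g(F)=L$, then follow automatically.

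First I would check $B$: by \eqref{eq1oct11}, $\g(\sigma(0,j,n))=\sigma(0,n-j,n)$, and $1\le n-j\le n-1$ whenever $1\le j\le n-1$, so $\g$ maps $B$ into itself. Next, for $S_{n-2}^\vartriangle$ I would apply \eqref{eq1oct11} to $\sigma(i,j,k)$ with $i\neq 0$ and $k\neq n$: the image $\sigma(n-k,n-j,n-i)$ has first cut point $n-k\neq 0$, last cut point $n-i\neq n$, and satisfies $0\le n-k<n-j<n-i\le n$, so it again lies in $S_{n-2}^\vartriangle$. For the pair $L,F$ I would argue in the same way: if $\sigma(0,j,k)\in L$, so $k\le n-1$, then $\g(\sigma(0,j,k))=\sigma(n-k,n-j,n)$ with $n-k\ge 1$, hence the image belongs to $F$; and if $\sigma(i,j,n)\in F$, so $i\ge 1$, then $\g(\sigma(i,j,n))=\sigma(0,n-j,n-i)$ with $n-i\le n-1$, hence the image belongs to $L$. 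Combining these four inclusions with the involution property of $\g$ completes the proof.

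I do not expect any real obstacle here: the statement is essentially a bookkeeping consequence of formula \eqref{eq1oct11} and of Lemma~\ref{lem1oct9}. The only point that needs a little care is tracking which of the two ``extreme'' cut points ($0$ or $n$) is forced in each block, and observing that $\g$ exchanges these two roles, turning the condition $i=0$ into $k=n$ and the condition $k=n$ into $i=0$. This is precisely why $B$ and $S_{n-2}^\vartriangle$ are each preserved, since for their members both extreme conditions hold in the case of $B$ and both fail in the case of $S_{n-2}^\vartriangle$, while $L$ and $F$ get interchanged, since for their members exactly one of the two conditions holds.
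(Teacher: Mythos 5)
Your proof is correct and follows exactly the route the paper intends: the corollary is stated there as an immediate consequence of Lemma~\ref{lem1oct9} and formula \eqref{eq1oct11}, and your argument simply carries out that bookkeeping explicitly, checking the cut-point conditions $i=0$ and $k=n$ block by block. The identification $B=\{\sigma(0,j,n)\mid 1\le j\le n-1\}$ and the use of the involution property of $\g$ to upgrade inclusions to equalities are both accurate.
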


\begin{lemma}\label{lem1oct11}
No edge of $\Cay$ has one endpoint in $B$ and the other in $S_{n-2}^\vartriangle$.
\end{lemma}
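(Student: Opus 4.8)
The plan is to argue by contradiction. Suppose some $\tau\in B$ is joined to some $\rho\in S_{n-2}^\vartriangle$ by an edge of $\Cay$. Since $T_n$ is inverse closed, this means precisely that $\sigma:=\tau^{-1}\circ\rho$ lies in $T_n$, say $\sigma=\sigma(a,b,c)$. I would then determine the outer cut points $a$ and $c$ by examining how $\sigma$ acts on the two extreme positions $1$ and $n$, conclude that $\sigma$ must be a power of $\beta$, and thereby force $\rho$ into the cyclic group $\langle\beta\rangle$ --- which contradicts the Partition Lemma.

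First I would record a handful of elementary facts, all read off from~\eqref{eq22ott12}. A block transposition $\sigma(a,b,c)$ satisfies $\sigma(a,b,c)_1=1$ exactly when $a\ge 1$, and $\sigma(a,b,c)_n=n$ exactly when $c\le n-1$; equivalently, $\sigma_1\ne 1$ forces $a=0$, and $\sigma_n\ne n$ forces $c=n$. Next, every $\rho\in S_{n-2}^\vartriangle$ is the extension to $[n]$ of a block transposition on $\{2,\dots,n-1\}$, so $\rho_1=1$ and $\rho_n=n$. Finally, writing $\tau=\beta^{\,j}=\sigma(0,j,n)$ with $1\le j\le n-1$, formula~\eqref{eqa18oct} gives $\tau^{-1}=\sigma(0,n-j,n)=\beta^{\,n-j}\in B$, and hence $(\tau^{-1})_1=n-j+1\ne 1$ and $(\tau^{-1})_n=n-j\ne n$.

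Now I would combine these observations. Because $\rho$ fixes both $1$ and $n$, we get $\sigma_1=(\tau^{-1}\circ\rho)_1=(\tau^{-1})_1\ne 1$ and $\sigma_n=(\tau^{-1}\circ\rho)_n=(\tau^{-1})_n\ne n$, so $a=0$ and $c=n$. Thus $\sigma=\sigma(0,b,n)=\beta^{\,b}$ for some $b$ with $1\le b\le n-1$, and therefore $\rho=\tau\circ\sigma=\beta^{\,j}\circ\beta^{\,b}=\beta^{\,j+b}$ lies in $\langle\beta\rangle=B\cupdot\{\iota\}$. Since $\rho\in T_n$ we have $\rho\ne\iota$, so $\rho\in B$; but $B\cap S_{n-2}^\vartriangle=\emptyset$ by Lemma~\ref{lem1oct9}, a contradiction. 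Hence no edge of $\Cay$ joins a vertex of $B$ to a vertex of $S_{n-2}^\vartriangle$.

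The argument is short, and there is no real obstacle: the only thing that needs care is the bookkeeping in the middle paragraph, namely the exact conditions under which a block transposition fixes position $1$ or position $n$ and the fact that $B$ is closed under taking inverses, both of which follow directly from~\eqref{feb9}, \eqref{eq22ott12} and~\eqref{eqa18oct}.
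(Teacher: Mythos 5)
Your proof is correct and follows essentially the same route as the paper: both arguments form the quotient of the $B$-vertex and the $S_{n-2}^\vartriangle$-vertex, observe that it is a block transposition not fixing $1$ or $n$ and hence lies in $B$, and then use the fact that $B\cup\{\iota\}=\langle\beta\rangle$ is a group to force the $S_{n-2}^\vartriangle$-vertex into $B$, contradicting the Partition Lemma.
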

\begin{proof}
Suppose on the contrary that $\{\sigma(i',j',k'),\sigma(0,j,n)\}$ with $i'\neq 0$ and $k'\neq n$ is an edge of $\Cay.$ By \eqref{eqa18oct},
$\rho=\sigma(0,n-j,n)\circ \sigma(i',j',k')\in T_n.$ Also, $\rho\in B$ as $\rho_1\neq 1$ and $\rho_n\neq n$. Since $B$ together with the identity is a group, $\sigma(0,j,n)\circ\rho$ is also in $B$. This yields $\sigma(i',j',k')\in B,$ a contradiction with Lemma~\ref{lem1oct9}.
\end{proof}

The proofs of the subsequent properties use a few more equations involving block transpositions which are stated in the following two lemmas.

\begin{lemma}\label{lem2oct13}
In each of the following cases $\{\sigma(i,j,k),\sigma(i',j',k')\}$ is an edge of $\Cay.$
\begin{itemize}
\item[\rm(i)]$(i',j')=(i,j)$ and $k\ne k'$;
\item[\rm(ii)]$(i',j')=(j,k)$ for $k<k';$
\item[\rm(iii)]$(j',k')=(j,k)$ and $i\ne i'$;
\item[\rm(iv)]$(j',k')=(i,j)$ for $i'<i;$
\item[\rm(v)]$(i',k')=(i,k)$ and $j\ne j'$.
\end{itemize}
\end{lemma}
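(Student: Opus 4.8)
The claim is that each of the five described pairs $\{\sigma(i,j,k),\sigma(i',j',k')\}$ is an edge of $\Cay$, i.e. that the product $\sigma(i,j,k)^{-1}\circ\sigma(i',j',k')$ lies in $T_n$. The cleanest route is to compute this product explicitly in each case using the permutation description \eqref{eq22ott12} together with the inverse formula $\sigma(i,j,k)^{-1}=\sigma(i,k-j+i,k)$ from \eqref{eqa18oct}, and to exhibit the resulting permutation as a concrete block transposition $\sigma(i'',j'',k'')$. In fact, since applying a block transposition on the right of a permutation $\pi$ just swaps two adjacent blocks of $\pi$, it is easier to work with the right action: for fixed $\sigma=\sigma(i,j,k)$, the neighbors of $\sigma$ in $\Cay$ are exactly the permutations $\sigma\circ\tau$ with $\tau\in T_n$, so I want to realize each $\sigma(i',j',k')$ as $\sigma(i,j,k)\circ\sigma(a,b,c)$ for suitable cut points $(a,b,c)$ with $0\le a<b<c\le n$. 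Writing $\sigma=[1\cdots i\;|\;j{+}1\cdots k\;|\;i{+}1\cdots j\;|\;k{+}1\cdots n]$ and then swapping two adjacent blocks of this sequence, one reads off which block transposition of $[n]$ the result is; the whole proof is then a finite case check.

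Concretely, here is how I expect the five cases to resolve. For (i), with $(i',j')=(i,j)$ and $k'\ne k$: $\sigma(i,j,k')$ and $\sigma(i,j,k)$ differ only in where the second block ends, and one checks $\sigma(i,j,k)\circ\sigma(a,b,c)=\sigma(i,j,k')$ by taking the cut points inside the relevant blocks — essentially applying a further block transposition that moves the boundary from $k$ to $k'$. For (ii), $(i',j')=(j,k)$ with $k<k'$: here $\sigma(j,k,k')$ should be obtainable from $\sigma(i,j,k)$ by swapping the block $[i{+}1\cdots j]$ (which sits in positions $k{-}j{+}i{+}1,\dots,k$ of $\sigma(i,j,k)$) with the block $[k{+}1\cdots k']$; the inequality $k<k'$ is exactly what makes those cut points legal. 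Cases (iii) and (iv) are the ``mirror images'' of (i) and (ii) respectively — indeed Corollary~\ref{lem2oct11} / the reverse map identity \eqref{eq1oct11} sends $\sigma(i,j,k)\mapsto\sigma(n{-}k,n{-}j,n{-}i)$ and interchanges the role of the ``$i$'' end with the ``$k$'' end, so (iii) follows from (i) and (iv) from (ii) by applying $\g$, which is a graph automorphism by Proposition~\ref{prop10oct}. For (v), $(i',k')=(i,k)$ with $j\ne j'$: both $\sigma(i,j,k)$ and $\sigma(i,j',k)$ agree outside positions $i{+}1,\dots,k$, and on that window each is obtained from $\iota$ by a single block transposition with cut points $(0,\,\cdot\,,k-i)$, so their ``difference'' is again a block transposition supported on that window — this is really the statement that $\sigma(i,i{+}1,k)$ generates a cyclic group all of whose nontrivial elements are the $\sigma(i,j,k)$, which is \eqref{eqa18oct}.

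Thus the skeleton is: (1) reduce to showing $\sigma(i,j,k)^{-1}\circ\sigma(i',j',k')\in T_n$; (2) handle (i), (ii), (v) by direct computation with \eqref{eq22ott12} and \eqref{eqa18oct}, displaying the explicit cut points; (3) deduce (iii) and (iv) from (i) and (ii) via the reverse map $\g$ and \eqref{eq1oct11}, using that $\g$ fixes $\iota$ and preserves $T_n$ (Corollary~\ref{th1}), hence is an automorphism of $\Gamma$. The main obstacle — really the only obstacle — is bookkeeping: one must be careful about the boundary/degenerate subcases $i=0$, $k=n$, $j'=j\pm 1$, and the ordering constraints among the cut points, since \eqref{eq22ott12} splits into four cases and an off-by-one error in an index range propagates. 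I would organize the computation so that the generic case $1\le i$, $k<n$ is done once in full, and the degenerate cases are dispatched by the same formulas with the convention that empty blocks are simply omitted, noting that \eqref{eq22ott12} already records exactly these degeneracies. No deeper idea is needed beyond the explicit block-swap picture.
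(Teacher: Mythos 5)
Your overall route is the same as the paper's: for each pair you exhibit the second block transposition as the first one right-multiplied by an explicit element of $T_n$, read off from the block picture of \eqref{eq22ott12} (the paper records exactly such identities, e.g.\ $\sigma(i,j,k)=\sigma(i,j,k')\circ\sigma(k'-j+i,k',k)$ for (i) and $\sigma(i,j,k)=\sigma(i,j',k)\circ\sigma(i,k-j'+j,k)$ for (v)). Your two deviations are legitimate and non-circular: deducing (iii) from (i) and (iv) from (ii) via the reverse map uses only \eqref{eq1oct11} and the fact that $\g\in\Aut(\Cay)$, which is established in Section~4, before this lemma (the paper instead treats (iii) and (iv) by direct identities); and handling (v) through the cyclic subgroup generated by $\sigma(i,i+1,k)$ is just a repackaging of \eqref{eqa18oct}. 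None of this changes the substance of the argument.

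There is, however, one concrete slip in your sketch of case (ii). Swapping, inside $\sigma(i,j,k)=[1\cdots i\;\,j{+}1\cdots k\;\,i{+}1\cdots j\;\,k{+}1\cdots n]$, the block $i{+}1\cdots j$ (positions $k-j+i+1,\ldots,k$) with the block $k{+}1\cdots k'$ (positions $k+1,\ldots,k'$) — i.e.\ right-multiplying by $\sigma(k-j+i,k,k')$ — produces $[1\cdots i\;\,j{+}1\cdots k'\;\,i{+}1\cdots j\;\,k'{+}1\cdots n]=\sigma(i,j,k')$, which is a case-(i) neighbor, not $\sigma(j,k,k')$. The swap you actually need is of the block $j{+}1\cdots k$ (positions $i+1,\ldots,k-j+i$) with the adjacent block occupying positions $k-j+i+1,\ldots,k'$ (whose entries are $i{+}1\cdots j\;k{+}1\cdots k'$); this gives
$$\sigma(j,k,k')=\sigma(i,j,k)\circ\sigma(i,\,k-j+i,\,k'),$$
which is exactly the inverse form of the paper's identity $\sigma(i,j,k)=\sigma(j,k,k')\circ\sigma(i,k'-k+j,k')$, the hypothesis $k<k'$ (together with $i<j<k$) guaranteeing that the cut points are admissible. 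With that correction, your plan goes through and is essentially the paper's proof.
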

\begin{proof}
(i) W.l.o.g.
$k'<k.$ By \eqref{eq22ott12}, $\sigma(i,j,k)=\sigma(i,j,k')\circ\sigma(k'-j+i,k',k).$
(iii) W.l.o.g.
$i'<i.$ From \eqref{eq22ott12},
$\sigma(i,j,k)=\sigma(i',j,k)\circ\sigma(i',k-j+i',k-j+i).$

In the remaining cases, from \eqref{eq22ott12},
$$\begin{array}{lll}
\sigma(i,j,k)=\sigma(j,k,k')\circ\sigma(i,k'-k+j,k'),\\
\sigma(i,j,k)=\sigma(i',i,j)\circ\sigma(i',j-i+i',k),\\
\sigma(i,j,k)=\sigma(i,j',k)\circ\sigma(i,k-j'+j,k),
\end{array}$$
where by the last equality we assume that $j<j'$. Hence the statements hold.
\end{proof}

\begin{lemma}\label{prop1oct12}
The following equations hold.
\begin{itemize}
\item[\rm(i)]$\sigma(i,j,n)=\sigma(0,j,n)\circ\sigma(0,n-j,n-j+i)$ for $i\neq 0;$
\item[\rm(ii)]$\sigma(i,j,n)=\sigma(0,i,j)\circ\sigma(0,j-i,n)$ for $i\neq 0;$
\item[\rm(iii)]$\sigma(0,j,n)=\sigma(i,j,n)\circ\sigma(0,i,n-j+i)$
for $i\neq 0;$
\item[\rm(iv)]$\sigma(0,j,n)=\sigma(0,j,j+i)\circ\sigma(i,j+i,n)$ for
$0< i < n-j.$
\end{itemize}
\end{lemma}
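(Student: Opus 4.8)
The plan is to read every identity as an equality of permutations written in one-line notation, exploiting the description of right multiplication by a block transposition as an interchange of two adjacent blocks. Since $\iota\circ\tau=\tau$ for each $\tau\in T_n$, the product $\sigma(d,e,\ell)\circ\sigma(g,h,m)$ is obtained from the string $\sigma(d,e,\ell)$, written explicitly by means of \eqref{eq22ott12}, by swapping the two middle blocks cut out at the points $g,h,m$. Hence I would prove each of (i)--(iv) by expressing the left factor of its right-hand side through \eqref{eq22ott12}, carrying out the prescribed block swap, and matching the result against the left-hand side, again read off from \eqref{eq22ott12}. The only preliminary to each swap is checking that the triple of cut points involved is admissible, i.e. lies strictly between $0$ and $n$; for example in (i) the triple $(0,n-j,n-j+i)$ satisfies $0<n-j<n-j+i<n$ since $0<i<j<n$, and equally elementary inequalities dispose of the other cases.

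In fact only one genuine computation is needed, since (ii), (iii), (iv) follow from (i) formally. Right-multiplying (i) by $\sigma(0,n-j,n-j+i)^{-1}$ and using the first identity of \eqref{eqa18oct}, which gives $\sigma(0,n-j,n-j+i)^{-1}=\sigma(0,i,n-j+i)$, yields exactly (iii). Taking the inverse of (i) and applying \eqref{eqa18oct} again gives $\sigma(i,n-j+i,n)=\sigma(0,i,n-j+i)\circ\sigma(0,n-j,n)$; since $j\mapsto n-j+i$ is an involution of the interval $i<j<n$, this is precisely (ii) after relabeling, with matching ranges. In the same way the inverse of (iii), after the substitution $j\mapsto n-j$ (which sends the range $0<i<j<n$ onto $0<i<n-j$), is exactly (iv). So the whole lemma reduces to a direct verification of (i).

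I expect the only real work to be bookkeeping rather than anything deep: one must respect the four possible shapes of a block transposition in \eqref{eq22ott12} (depending on whether its first cut point is $0$ and whether its last cut point is $n$) and keep the index ranges of the four blocks straight. Under the hypotheses of (i) all the permutations occurring have determined shapes, so this is a short, finite check. As a cross-check, one may instead verify each of (i)--(iv) directly from the functional definition \eqref{feb9}, comparing the two sides on each of the intervals listed in \eqref{function}; that route is more mechanical but sidesteps the case analysis built into \eqref{eq22ott12}.
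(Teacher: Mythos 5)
Your proposal is correct, and the steps all check out: the direct verification of (i) via \eqref{eq22ott12} does go through (right-multiplying $\sigma(0,j,n)=[j+1\cdots n\;1\cdots j]$ by $\sigma(0,n-j,n-j+i)$ swaps the blocks $1\cdots i$ and $j+1\cdots n$ into $[1\cdots i\;j+1\cdots n\;i+1\cdots j]=\sigma(i,j,n)$), and your formal derivations of (ii), (iii), (iv) via \eqref{eqa18oct} are sound, including the range bookkeeping under the relabelings $j\mapsto n-j+i$ and $j\mapsto n-j$. Your organization, however, differs from the paper's. The paper performs no fresh computation at all: it notes that (i), (ii) and (iv) are specializations (putting $i'=0$, $k=n$, $k'=i+j$) of the three product identities already written out in the proof of Lemma~\ref{lem2oct13}, and only (iii) is obtained formally, by exactly the right-multiplication by $\sigma(0,n-j,n-j+i)^{-1}=\sigma(0,i,n-j+i)$ that you also use. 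You instead reduce the whole lemma to the single block-swap verification of (i) and then exploit the anti-automorphism $\pi\mapsto\pi^{-1}$ together with the inversion formula of \eqref{eqa18oct}: the inverse of (i) is (ii) up to relabeling, and the inverse of (iii) is (iv). The paper's route buys brevity given that Lemma~\ref{lem2oct13} is already in hand; yours buys self-containedness and makes explicit a symmetry the paper leaves implicit, namely that the four identities fall into the two inversion-related pairs (i)/(ii) and (iii)/(iv). In substance both arguments rest on the same elementary manipulation of one-line forms via \eqref{eq22ott12} (or equivalently \eqref{feb9}), so there is no gap to repair.
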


\begin{proof}
(i),(ii) and (iv) are particular cases of the equations derived
in the previous proof. Namely,  if $i'< i,$ we have shown that 
$\sigma(i,j,k)=\sigma(i',j,k)\circ\sigma(i',k-j+i',k-j+i)=
\sigma(i',i,j)\circ\sigma(i',j-i+i',k)$.
Now, (i) and (ii) follow by substituting $k=n$ and $i'=0$ in
the first and the second equation, respectively. Also, if  $k'<k,$
then $\sigma(i,j,k)=\sigma(i,j,k')\circ\sigma(k'-j+i,k',k)$.
This yields (iv) by substituting at first $i=0$ and $k=n,$ and then
letting $k'=i+j$.
Finally,  we obtain (iii) by multiply both sides of the equation in (i) for 
$\sigma(0,n-j,n-j+i)^{-1}=\sigma(0,i,n-j+i)$.
\end{proof}

\begin{lemma}
\label{leaoct19}
Let $i$ be an integer with $0<i\leq n-2.$
\begin{itemize}
\item[\rm(i)] If $\sigma(i,j,n)=\sigma(0,\bar{j},n)\circ\sigma(i',j',k'),$ then $\bar{j}=j.$
\item[\rm(ii)] If $\sigma(i,j,n)=\sigma(i',j',k')\circ \sigma(0,\bar{j},n),$ then $\bar{j}=j-i.$ 
\end{itemize}
\end{lemma}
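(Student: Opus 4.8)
The plan is to prove part~(i) by a short evaluation argument and then to obtain part~(ii) from it by passing to inverses (these are, in effect, the uniqueness counterparts of the factorizations in Lemma~\ref{prop1oct12}). As preparation, recall that $\beta=\sigma(0,1,n)=[2\,3\cdots n\,1]$ and, by \eqref{eqa18oct}, $\sigma(0,m,n)=\beta^{\,m}$ for $1\le m\le n-1$; thus $\beta$ is an $n$-cycle and $(\beta^{\,m})_t\equiv t+m\pmod n$ with values taken in $[n]$, so in particular $\sigma(0,\bar j,n)_1=\bar j+1$ and $\sigma(0,\bar j,n)_n=\bar j$.

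For part~(i), put $\sigma=\sigma(i,j,n)$ and start from $\sigma=\sigma(0,\bar j,n)\circ\sigma(i',j',k')$. The first step is to exclude $k'=n$. If $k'=n$ and $i'\ge 1$, then $\sigma(i',j',n)_1=1$ by \eqref{feb9}, so evaluating the identity at position~$1$ gives $1=\sigma_1=\sigma(0,\bar j,n)_1=\bar j+1$, which is impossible. If $k'=n$ and $i'=0$, then $\sigma(0,\bar j,n)$ and $\sigma(0,j',n)$ are both powers of $\beta$, hence so is $\sigma$; but $\sigma$ fixes position~$1$ by \eqref{feb9} and is not the identity (for instance $\sigma_{i+1}=j+1\ne i+1$ by \eqref{cuppoints}), while a nontrivial power of the $n$-cycle $\beta$ has no fixed point, a contradiction. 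Hence $k'\ne n$, so \eqref{feb9} gives $\sigma(i',j',k')_n=n$. Evaluating the identity at position~$n$ then yields
\[
j=\sigma(i,j,n)_n=\bigl(\sigma(0,\bar j,n)\circ\sigma(i',j',k')\bigr)_n=\sigma(0,\bar j,n)_n=\bar j,
\]
using $\sigma(i,j,n)_n=j$ and $\sigma(0,\bar j,n)_n=\bar j$ from \eqref{cuppoints}. This proves~(i).

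For part~(ii), I would take inverses in $\sigma(i,j,n)=\sigma(i',j',k')\circ\sigma(0,\bar j,n)$. By \eqref{eqa18oct} this becomes
\[
\sigma(i,n-j+i,n)=\sigma(0,n-\bar j,n)\circ\sigma(i',k'-j'+i',k'),
\]
which has exactly the form treated in part~(i), with $j$ replaced by $n-j+i$ and $\bar j$ by $n-\bar j$ (note that $0<i\le n-2$ and $0<n-j+i<n$ are in force, and the last factor lies in $T_n$). Hence part~(i) gives $n-\bar j=n-j+i$, that is, $\bar j=j-i$.

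The only genuine difficulty is excluding the case $k'=n$ in part~(i); this is precisely where the hypothesis $i>0$ enters, since $\sigma(i,j,n)$ then fixes position~$1$ while being different from $\iota$, something no nontrivial power of $\beta$ can do, and neither can $\sigma(0,\bar j,n)\circ\sigma(i',j',n)$ with $i'\ge 1$. Once $k'=n$ is ruled out, a single evaluation at position~$n$ finishes the proof, and part~(ii) costs nothing beyond inverting.
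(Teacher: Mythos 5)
Your proof is correct, and for part (i) it takes a genuinely more direct route than the paper. The paper argues by contradiction: assuming $\bar{j}\neq j$, it combines the hypothesis with the factorization of Lemma~\ref{prop1oct12}~(i) to rewrite $\sigma(i',j',k')$ as $\sigma(0,j^*,n)\circ\sigma(0,n-j,n-j+i)$, then shows successively that $i'=0$ and $k'=n$, and finally derives a contradiction from $\sigma(0,n-j,n-j+i)$ having to lie in $B$. You instead argue forward: you first exclude $k'=n$ by evaluating at position $1$ (using that $\sigma(i,j,n)$ with $i>0$ fixes $1$, that $\sigma(0,\bar j,n)_1=\bar j+1$, and that nontrivial elements of $\langle\beta\rangle$ are fixed-point-free), and then a single evaluation at position $n$, where $\sigma(i',j',k')$ with $k'<n$ acts trivially, gives $\bar j=j$ directly via \eqref{cuppoints}. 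The tools are the same (the explicit formula \eqref{feb9}, the cut-point values, and the cyclic structure of $B$), but your argument avoids the detour through Lemma~\ref{prop1oct12} and the auxiliary exponent $j^*$, which makes it shorter and arguably cleaner; the paper's version has the minor advantage of staying entirely within manipulations of the factorization identities it has already set up. For part (ii) your argument (take inverses via \eqref{eqa18oct} and apply part (i), checking that the transformed cut points are admissible) is exactly the paper's.
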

\begin{proof} (i) Assume $\bar{j}\neq j.$ From Lemma~\ref{prop1oct12} (i) and \eqref{eqa18oct},
\begin{equation}\label{matoct25}
\sigma(i',j',k')=\sigma(0,j^*,n)\circ\sigma(0,n-j,n-j+i),
\end{equation}where $j^*$ denotes the smallest positive integer such that $j^*\equiv j-\bar{j} \pmod n$. First we prove $i'=0$. Suppose on the contrary, then $$(\sigma(0,j^*,n)\circ\sigma(0,n-j,n-j+i))_1=1.$$On the other hand, $\sigma(0,n-j,n-j+i)_1=n-j+1$ and $\sigma(0,j^*,n)_{n-j+1}=n-\bar{j}+1$ since
$\sigma(0,j^*,n)_t=t+j^*\pmod n$ by \eqref{feb9}. Thus, $n-\bar{j}+1=1,$ a contradiction since $\bar{j}<n.$

Now, from \eqref{matoct25}, $\sigma(0,j',k')_n\neq n.$ Hence $k'=n$. Therefore, $$\sigma(0,n-j,n-j+i)=\sigma(0, n-j^*
,n)\circ \sigma(0,j',n)\in B.$$A contradiction since $i\neq j$.
This proves the assertion.

(ii) Taking the inverse of both sides of the equation in (ii) gives by \eqref{eqa18oct}
$$\sigma(i,n-j+i,n)=\sigma(0,n-\bar{j},n)\circ \sigma(i',j',k')^{-1}.$$ Now, from (i), $n-\bar{j}=n-j+i$, and the assertion follows.
\end{proof}

\begin{proposition}\label{prova}
The bipartite graphs arising from the components of the partition in Lemma~\ref{lem1oct9} have the following properties.
\begin{itemize}
\item[\rm(i)] In the bipartite subgraph $(L\cup F,B)$ of $\Cay,$ every vertex in $L\cup F$ has degree $1$ while every vertex of $B$ has degree $n-2.$
\item[\rm(ii)] The bipartite subgraph $(L,F)$ of $\Cay$ is a $(1,1)$-biregular graph.
\end{itemize}
\end{proposition}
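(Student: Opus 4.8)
My plan is to do all of the real work on the ``$F$-side'' of the picture and then transport the conclusions to the ``$L$-side'' by means of the reverse map $\g$. Recall that $\g$ is an automorphism of $\Cay$ fixing $\iota$ (Proposition~\ref{prop10oct}), and by Corollary~\ref{lem2oct11} it preserves $B$ setwise and interchanges $L$ and $F$; since it maps edges to edges bijectively, it induces a bijection between the $L$--$B$ edges and the $F$--$B$ edges of $\Cay$, and it maps the set of $L$--$F$ edges onto itself, swapping the two sides. I shall use throughout that the words $\sigma(0,b,c)$ and $\sigma(i,j,k)$ are given explicitly by \eqref{eq22ott12}, that distinct parameters give distinct such permutations, and that on the right a block transposition $\sigma(i',j',k')$ acts on a word by switching the two adjacent blocks cut out by $i'<j'<k'$.

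For part~(i) I first claim that every $\phi=\sigma(i,j,n)\in F$ has exactly one neighbour in $B$. Existence: Lemma~\ref{prop1oct12}(i) gives $\sigma(i,j,n)=\sigma(0,j,n)\circ\sigma(0,n-j,n-j+i)$ with the second factor lying in $T_n$, so $\{\sigma(i,j,n),\sigma(0,j,n)\}$ is an edge. Uniqueness: since $0<i\le n-2$ for $\phi\in F$, Lemma~\ref{leaoct19}(i) applies, and it tells us that any $\sigma(0,\bar j,n)\in B$ adjacent to $\sigma(i,j,n)$ must satisfy $\bar j=j$. Applying $\g$, every vertex of $L$ also has exactly one neighbour in $B$, and the $L\cup F$ part of (i) is done. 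For the $B$ part, fix $\sigma(0,j,n)\in B$: by Lemma~\ref{leaoct19}(i) its neighbours inside $F$ are exactly the $j-1$ permutations $\sigma(i',j,n)$ with $1\le i'<j$ (all of which are neighbours, by Lemma~\ref{prop1oct12}(i)), while its neighbours inside $L$ correspond under $\g$ to the $F$-neighbours of $\g(\sigma(0,j,n))=\sigma(0,n-j,n)$, of which there are $(n-j)-1$. Hence $\sigma(0,j,n)$ has $(j-1)+(n-j-1)=n-2$ neighbours in $L\cup F$.

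For part~(ii), an $L$-neighbour of $\phi=\sigma(i,j,n)\in F$ exists by Lemma~\ref{prop1oct12}(ii): from $\sigma(i,j,n)=\sigma(0,i,j)\circ\sigma(0,j-i,n)$ we read off that $\sigma(0,i,j)\in L$ is a neighbour, and applying $\g$ every vertex of $L$ then has at least one neighbour in $F$. The content of (ii) is the uniqueness: I will show $\phi=\sigma(i,j,n)$ has \emph{at most} one neighbour in $L$, whence $(L,F)$ is $(1,1)$-biregular (transferring the $L$-side statement via $\g$). So suppose $\sigma(0,b,c)\in L$, i.e.\ $0<b<c<n$, is a neighbour, and write $\sigma(i,j,n)=\sigma(0,b,c)\circ\sigma(i',j',k')$ with $\sigma(i',j',k')\in T_n$. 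Comparing the entries in position $1$ forces $i'=0$ (otherwise the right side has $b+1\ne1$ there) and then $j'=c-b$ (the value $1$ occupies position $c-b+1$ of $\sigma(0,b,c)$); comparing the entries in position $n$ forces $k'=n$ (if $k'<n$ the right side would have $n$, not $j$, in position $n$). Thus $\sigma(i',j',k')=\sigma(0,c-b,n)\in B$ and $\sigma(i,j,n)=\sigma(0,b,c)\circ\sigma(0,c-b,n)$ is exactly the cyclic rotation of the word $\sigma(0,b,c)$ by $c-b$ positions. Its last entry, which is $j$, equals $\sigma(0,b,c)_{c-b}=c$, so $c=j$; and the entry $n$, which in $\sigma(0,b,c)$ sits in position $n$ and hence in position $n-c+b=n-j+b$ after the rotation, must sit in position $n-j+i$ of $\sigma(i,j,n)$ by \eqref{cuppoints}, so $b=i$. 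Hence $\sigma(0,b,c)=\sigma(0,i,j)$: the $L$-neighbour is unique, and in fact $(L,F)$ is the perfect matching pairing $\sigma(i,j,n)$ with $\sigma(0,i,j)$.

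The main obstacle is exactly this uniqueness step: one has to exclude all ``unexpected'' neighbours in $L$ of a given vertex of $F$. The key is to let the two extreme positions $1$ and $n$ do the work: they collapse the factorization $\sigma(i,j,n)=\sigma(0,b,c)\circ\sigma(i',j',k')$ to the case $\sigma(i',j',k')\in B$, after which right multiplication by an element of $B$ is merely a cyclic rotation, and a cyclic rotation of a given word is a single explicit word, so the parameters $(b,c)$ are pinned down. (One could instead package this as an $L$-analogue of Lemma~\ref{leaoct19}.) Everything else is routine bookkeeping with \eqref{feb9}, \eqref{cuppoints}, \eqref{eq22ott12}, and Lemma~\ref{prop1oct12}, together with the symmetry supplied by the reverse map $\g$.
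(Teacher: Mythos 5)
Your proof is correct, and it follows the paper's broad strategy (the partition of Lemma~\ref{lem1oct9}, the identities of Lemma~\ref{prop1oct12}, Lemma~\ref{leaoct19}(i), and the reverse-map symmetry of Corollary~\ref{lem2oct11}), but it diverges at the two points where exactness must be established, and the comparison is worth recording. For the degree of $\sigma(0,j,n)\in B$ in part (i), the paper only extracts the lower bounds $j-1$ and $n-j-1$ from Lemma~\ref{prop1oct12}(iii),(iv) and then forces equality by a global double count: since every vertex of $L\cup F$ has degree $1$, the bipartite graph $(L\cup F,B)$ has exactly $|L\cup F|=(n-1)(n-2)$ edges, which leaves no room for any of the $n-1$ vertices of $B$ to exceed degree $n-2$. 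You instead determine the neighbourhood exactly: Lemma~\ref{leaoct19}(i) pins the $F$-neighbours of $\sigma(0,j,n)$ down to $\{\sigma(i',j,n)\mid 1\le i'<j\}$, and $\g$ transfers the count $(n-j)-1$ from the $L$-side; this is a legitimate, more local argument that avoids the counting step. For part (ii) the paper simply cites Lemma~\ref{leaoct19}(ii) for uniqueness; that citation tacitly presupposes that the block transposition joining $\sigma(i,j,n)$ to an $L$-neighbour $\sigma(0,b,c)$ lies in $B$, a reduction the paper leaves implicit. Your comparison of positions $1$ and $n$, forcing $i'=0$, $j'=c-b$ and $k'=n$, supplies exactly this reduction, and your cyclic-rotation analysis then re-proves the content of Lemma~\ref{leaoct19}(ii) (yielding $c=j$ and $b=i$) rather than invoking it. So your version is longer but self-contained precisely where the paper's proof is terse, and it identifies the matching $\sigma(i,j,n)\leftrightarrow\sigma(0,i,j)$ explicitly; the paper's version buys brevity by outsourcing the uniqueness to Lemma~\ref{leaoct19}(ii) and the exact $B$-degree to an edge count.
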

\begin{proof} (i) Lemma~\ref{leaoct19} (i) together with Lemma~\ref{prop1oct12} (i) show that every vertex in $F$ has degree $1.$ Corollary~\ref{lem2oct11} ensures
that this holds true for $L.$

For every $1\le j \le n-1,$ Lemma~\ref{prop1oct12} (iii) shows that there exist at least $j-1$ edges incident with $\sigma(0,j,n)$ and a vertex in $F.$ Furthermore, from Lemma~\ref{prop1oct12} (iv), there exist at least $n-j-1$ edges incident with $\sigma(0,j,n)$ and a vertex in $L$. Therefore, at least $n-2$ edges incident with $\sigma(0,j,n)$ have a vertex in $L\cup F$. On the other hand, this number cannot exceed $n-2$ since $|L\cup F|=(n-1)(n-2)$ from Lemma~\ref{lem1oct9}. This proves the first assertion.

(ii) From Lemma~\ref{prop1oct12} (ii), there exists at least one edge with a vertex in $F$ and another in $L$. Also, Lemma~\ref{leaoct19} (ii) ensures the uniqueness of such an edge.
\end{proof}

From now on, $\Gamma(W)$ stays for the induced subgraph of $\Gamma$ on the vertex set $W.$

 \begin{corollary}
\label{cor3oct13ter}
$B$ is the unique maximal clique of $\Gamma$ of size $n-1$ containing an edge of $\Gamma(B).$
\end{corollary}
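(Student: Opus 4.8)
The plan is to show that any maximal clique of $\Gamma$ of size $n-1$ that contains an edge lying inside $\Gamma(B)$ must coincide with $B$ itself. I would start from the structural picture provided by Lemma~\ref{lem1oct9} and Proposition~\ref{prova}: the vertex set $T_n$ splits as $B\cupdot L\cupdot F\cupdot S_{n-2}^\vartriangle$, and Lemma~\ref{lem1oct11} tells us there are no edges between $B$ and $S_{n-2}^\vartriangle$, while Proposition~\ref{prova}(i) says that each vertex of $B$ has exactly $n-2$ neighbours in $L\cup F$, and each vertex of $L\cup F$ has exactly one neighbour in $B$. Since $|B|=n-1$ and $B$ induces a complete graph, every vertex of $B$ already has $n-2$ neighbours inside $B$; combined with the $n-2$ neighbours in $L\cup F$ and Lemma~\ref{lem1oct11}, the neighbourhood of a vertex $\beta^a\in B$ within $\Gamma$ is precisely $(B\setminus\{\beta^a\})\cup N_a$, where $N_a\subseteq L\cup F$ is a set of size $n-2$.

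Now let $K$ be a maximal clique of size $n-1$ containing an edge $\{\beta^a,\beta^b\}$ with $\beta^a,\beta^b\in B$. The key step is to argue that $K\subseteq B$. Suppose not; then $K$ contains some vertex $x\notin B$, and by Lemma~\ref{lem1oct11} together with the partition, $x\in L\cup F$. Since $x$ is adjacent to both $\beta^a$ and $\beta^b$ in $B$, this contradicts Proposition~\ref{prova}(i), which asserts $x$ has exactly one neighbour in $B$. Hence every vertex of $K$ other than the $B$-vertices already present must also lie in $B$, i.e. $K\subseteq B$. But $|K|=n-1=|B|$, so $K=B$, which is indeed a clique (the complete graph on the cyclic subgroup generated by $\beta$). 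For uniqueness, any maximal clique of size $n-1$ meeting $\Gamma(B)$ in an edge is forced by the same argument to equal $B$.

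The one point that needs a little care — and which I expect to be the main (though still mild) obstacle — is the very first reduction: ruling out that $K$ contains a vertex outside $B\cup(L\cup F)$, i.e. a vertex in $S_{n-2}^\vartriangle$. This is immediate from Lemma~\ref{lem1oct11}, since such a vertex would have to be adjacent to $\beta^a\in B$, contradicting the absence of $B$–$S_{n-2}^\vartriangle$ edges. After that, everything rests on the degree count in Proposition~\ref{prova}(i), so there is no real computation left; the statement follows by counting neighbours.
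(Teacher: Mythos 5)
Your proof is correct and follows essentially the same route as the paper: the paper's one-line argument combines Lemma~\ref{lem1oct11} and Proposition~\ref{prova}(i) to conclude that the endpoints of an edge of $\Gamma(B)$ have no common neighbour outside $B$, which is exactly the counting you carry out in more detail. Nothing is missing.
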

\begin{proof}
Proposition~\ref{prova} (i) together with
Lemma~\ref{lem1oct11}
show that the endpoints of an edge of $\Gamma(B)$ do not have a common neighbor outside $B.$
\end{proof}

Computations performed by using the package ``grape'' of GAP~\cite{gap} show that $\Gamma$ is a $6$-regular subgraph for $n=5$ and $8$-regular subgraph for $n=6,$ but $\Gamma$ is only $3$-regular for $n=4.$ This generalizes to the following result.

\begin{proposition}\label{thm1oct11}
$\Gamma$ is a $2(n-2)$-regular graph whenever $n\geq 5.$
\end{proposition}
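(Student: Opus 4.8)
The plan is to argue by induction on $n$, taking $n=5$ and $n=6$ — settled by the GAP computations quoted above — as the base, and proving the claim for $n\ge 7$ under the inductive hypothesis that the block transposition graph on $m$ letters is $2(m-2)$-regular for every $m$ with $5\le m<n$. For a vertex $\sigma$ of $\Gamma$ and a subset $X\subseteq T_n$ write $d_X(\sigma)$ for the number of neighbours of $\sigma$ lying in $X$; by the Partition lemma (Lemma~\ref{lem1oct9}) it suffices to show $d_B(\sigma)+d_L(\sigma)+d_F(\sigma)+d_S(\sigma)=2(n-2)$ for every $\sigma$, where $S$ abbreviates $S_{n-2}^\vartriangle$. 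The first observation — immediate from the way $T_{n-1}$, $S_{n-1}^\triangledown$ and $S_{n-2}^\vartriangle$ are embedded in $T_n$ (if $\pi,\rho\in T_{n-1}$ both fix $n$, then $\pi^{-1}\rho$ fixes $n$, so it lies in $T_n$ exactly when it lies in $T_{n-1}$, and similarly for the other two) — is that $\Gamma(T_{n-1})$ is precisely the block transposition graph on $[n-1]$, while $\Gamma(S_{n-1}^\triangledown)$ and $\Gamma(S_{n-2}^\vartriangle)$ are isomorphic to the block transposition graphs on the sets $\{2,\dots,n\}$ and $\{2,\dots,n-1\}$; hence, by the inductive hypothesis, these three graphs are regular of valencies $2(n-3)$, $2(n-3)$ and $2(n-4)$. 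I will also use that $L\cupdot S=T_{n-1}$ and $F\cupdot S=S_{n-1}^\triangledown$, both of which are immediate from Lemma~\ref{lem1oct9}.

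Next I would run through the four blocks of the partition. If $\sigma\in B$, no induction is needed: $\Gamma(B)$ is complete on $n-1$ vertices so $d_B(\sigma)=n-2$; Proposition~\ref{prova}(i) gives $d_L(\sigma)+d_F(\sigma)=n-2$; and Lemma~\ref{lem1oct11} gives $d_S(\sigma)=0$; whence $\deg_\Gamma(\sigma)=2(n-2)$. If $\sigma\in L$, then $d_B(\sigma)=1$ by Proposition~\ref{prova}(i), $d_F(\sigma)=1$ by Proposition~\ref{prova}(ii), and $d_L(\sigma)+d_S(\sigma)$ is the valency of $\sigma$ in $\Gamma(T_{n-1})$, i.e.\ $2(n-3)$; so $\deg_\Gamma(\sigma)=1+1+2(n-3)=2(n-2)$. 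If $\sigma\in F$, I would invoke the reverse map $\g$: by Proposition~\ref{prop10oct} it is an automorphism of $\Cay$, it fixes $\iota$ and hence preserves the neighbourhood $T_n$ of $\iota$, so it restricts to an automorphism of $\Gamma$, and by Corollary~\ref{lem2oct11} it carries $F$ onto $L$; thus $\deg_\Gamma(\sigma)=\deg_\Gamma(\g(\sigma))=2(n-2)$ by the case just treated.

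The remaining — and only genuinely delicate — case is $\sigma\in S=S_{n-2}^\vartriangle$. Here Lemma~\ref{lem1oct11} gives $d_B(\sigma)=0$. Because $L\cupdot S=T_{n-1}$ and $F\cupdot S=S_{n-1}^\triangledown$, and both $\Gamma(T_{n-1})$ and $\Gamma(S_{n-1}^\triangledown)$ are regular of the same valency $2(n-3)$, we get $d_L(\sigma)+d_S(\sigma)=2(n-3)=d_F(\sigma)+d_S(\sigma)$, hence $d_L(\sigma)=d_F(\sigma)$; and since $d_S(\sigma)$ equals the valency of $\sigma$ in $\Gamma(S_{n-2}^\vartriangle)$, which is $2(n-4)$ by the inductive hypothesis (this is where $n\ge 7$ is used), we conclude $d_L(\sigma)=d_F(\sigma)=2(n-3)-2(n-4)=2$, so $\deg_\Gamma(\sigma)=0+2+2+2(n-4)=2(n-2)$, closing the induction. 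I expect this last case to be the main obstacle: one must show simultaneously that a vertex of $S_{n-2}^\vartriangle$ has equally many neighbours toward $L$ and toward $F$ and that this common number is $2$. The device above reduces this to the regularity of $\Gamma(T_{n-1})$ and $\Gamma(S_{n-2}^\vartriangle)$, but for it one needs the base of the induction to cover both $n=5$ and $n=6$, since the step at $n=6$ would appeal to the block transposition graph for $n=4$, which is not among the cases $m\ge 5$ assumed. Alternatively one could bypass the inductive identification of $d_S(\sigma)$ and argue directly that the only neighbours in $L$ of $\sigma(i,j,k)\in S_{n-2}^\vartriangle$ are $\sigma(0,j,k)$ and $\sigma(0,i,j)$: that these are neighbours follows from Lemma~\ref{lem2oct13}(iii),(iv), and their uniqueness would be proved along the lines of Lemma~\ref{leaoct19}.
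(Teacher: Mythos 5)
Your proposal is correct and is essentially the paper's own proof: the same induction on $n$ using the embedded copies of the smaller block transposition graphs $\Gamma(T_{n-1})$, $\Gamma(S_{n-1}^\triangledown)$, $\Gamma(S_{n-2}^\vartriangle)$, combined with Proposition~\ref{prova}, Lemma~\ref{lem1oct11}, and the reverse-map symmetry of Corollary~\ref{lem2oct11}; the only differences are cosmetic (you treat $L$ directly and carry the count to $F$ by $\g$, the paper does the mirror image, and your inclusion--exclusion for vertices of $S_{n-2}^\vartriangle$ is the paper's computation $2(2n-6)-(2n-8)$ in disguise). Your explicit remark that both $n=5$ and $n=6$ must serve as base cases, since the step at $n=6$ would otherwise appeal to the $3$-regular graph for $n=4$, is a welcome clarification of a point the paper leaves implicit.
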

\begin{proof}
Since $B$ is a maximal clique of size $n-1$, every vertex of $B$ is incident with $n-2$ edges of $\Gamma(B)$ and has no neighbor in $S_{n-2}^\vartriangle,$ by Lemma~\ref{lem1oct11}.
From Proposition~\ref{prova} (i), as many as $n-2$ edges incident with a vertex in $B$ have an endpoint in $L\cup F,$ Thus, the assertion holds for the vertices in $B.$

In $\Gamma(F\cup S_{n-2}^\vartriangle)$ every vertex has degree $2(n-1)-4=2n-6,$ by induction on $n.$
This together with Proposition~\ref{prova} (ii) show that every vertex of $\Gamma(F)$ has degree $2n-5$ in $\Gamma(L\cup F\cup S_{n-2}^\vartriangle).$ By Corollary~\ref{lem2oct11}, this holds true for every vertex of $\Gamma(L).$ The degree increases to $2n-4$ when we also count the unique neighbor
in $\Gamma(B),$ according to the first assertion of Proposition~\ref{prova} (i).

In $\Gamma(S_{n-2}^\vartriangle)$ every vertex has degree $2n-8,$ by induction on $n.$  
Since $\Gamma(F\cup S_{n-2}^\vartriangle)$ has degree $2n-6$, then $\Gamma(L\cup S_{n-2}^\vartriangle)$ has degree $2n-6$, by Corollary~\ref{lem2oct11}. This together with Lemma~\ref{lem1oct11} show that every vertex in $S_{n-2}^\vartriangle$ is the endpoint of exactly $2(2n-6)-(2n-8)$ edges in $\Gamma.$
\end{proof}

Our next step is to determine the set of all maximal cliques of
$\Gamma$ of size $2.$ From now on, we will be referring to the edges of the complete graph arising from a clique as the edges of the clique.
According to Lemma~\ref{lem2oct13} (v), let $\Lambda$ be the set of all edges $$e_l=\{\sigma(l,l+1,l+3),\sigma(l,l+2,l+3)\},$$ where $l$ ranges over $\{0,1,\ldots n-3\}$. From \eqref{eqa18oct}, the endpoints of such an edge are the inverse of one another.

\begin{proposition}\label{thm2oct11}
Let $n\geq 5.$ The edges in $\Lambda$ together with three more
edges
\begin{equation}
\label{eq1aoct12}
\begin{array}{lll}
e_{n-2}&=&\{\sigma(0,n-2,n-1),\sigma(0,n-2,n)\};\\
e_{n-1}&=&\{\sigma(1,n-1,n),\sigma(0,1,n-1)\};\\
e_{n}&=&\{\sigma(0,2,n),\sigma(1,2,n)\};\\
\end{array}
\end{equation}are pairwise disjoint edges of maximal cliques of
$\Gamma$ of size $2.$
\end{proposition}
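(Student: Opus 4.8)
The plan is to establish Proposition~\ref{thm2oct11} in two stages: first, verify that each listed pair is genuinely an edge of $\Gamma$; second, show that each such edge is a \emph{maximal} clique of size $2$, i.e.\ its two endpoints share no common neighbour in $T_n$; and finally, check pairwise disjointness. For the first stage, the edges $e_l\in\Lambda$ with $0\le l\le n-3$ are edges of $\Cay$ directly by Lemma~\ref{lem2oct13}(v) (take $(i,j,k)=(l,l+1,l+3)$ and $(i',j',k')=(l,l+2,l+3)$), and since both endpoints lie in $T_n$ they are edges of $\Gamma$. For $e_{n-2}$ one applies Lemma~\ref{lem2oct13}(i); for $e_{n-1}$ and $e_n$ one writes down the appropriate factorisation from \eqref{eq22ott12}, e.g.\ $\sigma(1,n-1,n)^{-1}\circ\sigma(0,1,n-1)\in T_n$, verifying it is a single block transposition. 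The observation following $\Lambda$ that endpoints are mutual inverses via \eqref{eqa18oct} is a useful sanity check and will also help organise the maximality argument.

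The core of the proof is the maximality of each clique, which amounts to showing that for an edge $e=\{\sigma,\sigma^{-1}\}$ there is no $\tau\in T_n$ adjacent to both $\sigma$ and $\sigma^{-1}$. Adjacency of $\sigma$ and $\tau$ means $\sigma^{-1}\circ\tau\in T_n$, and adjacency of $\sigma^{-1}$ and $\tau$ means $\sigma\circ\tau\in T_n$; so one must rule out the existence of $\tau=\sigma(i'',j'',k'')$ with both $\sigma\circ\tau$ and $\sigma^{-1}\circ\tau$ block transpositions. The strategy here is a direct combinatorial case analysis on the cut points, using the explicit form \eqref{eq22ott12} together with \eqref{function}--\eqref{cuppoints}: a product of two block transpositions is again a block transposition only in the restricted configurations catalogued in Lemma~\ref{lem2oct13} and Lemma~\ref{prop1oct12}, so one enumerates, for the specific $\sigma$ in each of $e_l, e_{n-2}, e_{n-1}, e_n$, all $\tau$ with $\sigma^{-1}\circ\tau\in T_n$ (there are $O(n)$ of them, coming from the five patterns of Lemma~\ref{lem2oct13}), and checks that none of them simultaneously satisfies $\sigma\circ\tau\in T_n$. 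Because $\sigma$ and $\sigma^{-1}$ are so symmetric here (they differ only by $j\leftrightarrow k-j+i$), the two conditions will typically force contradictory constraints on the middle cut point $j''$, which is exactly where the argument closes.

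Finally, pairwise disjointness: the $2(n+1)$ endpoints listed are $\sigma(l,l+1,l+3)$ and $\sigma(l,l+2,l+3)$ for $0\le l\le n-3$, plus the six permutations appearing in \eqref{eq1aoct12}. Disjointness among the $e_l$ is immediate since the cut points $(l,\cdot,l+3)$ determine $l$, and then the first and third cut points distinguish the two endpoints of a given $e_l$ for $n\ge 5$. For the remaining three edges one inspects the cut-point triples directly — e.g.\ $\sigma(0,n-2,n-1)$ has $k=n-1$ and $j=n-2=k-1$, which does not match any $(l,l+1,l+3)$ or $(l,l+2,l+3)$ unless $n$ is small — so disjointness reduces to a short check that the finitely many explicit triples are distinct from one another and from the generic family; this is where the hypothesis $n\ge 5$ is used to avoid the degenerate coincidences that occur at $n=4$.

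\textbf{Main obstacle.} I expect the bulk of the work, and the only real difficulty, to be the maximality claim: ruling out a common neighbour $\tau$ of $\sigma$ and $\sigma^{-1}$ requires carefully traversing the case list for when a product of two block transpositions is again a block transposition, and doing so for each of the four types of edge. The cases $e_{n-2}, e_{n-1}, e_n$ are boundary cases (involving cut point $0$ or $n$) and will each need the ad hoc identities from Lemma~\ref{prop1oct12} and Lemma~\ref{leaoct19} rather than the uniform patterns of Lemma~\ref{lem2oct13}, so they must be handled separately and are the most delicate part.
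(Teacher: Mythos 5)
There is a genuine gap in the central maximality step. You propose to rule out a common neighbour $\tau$ of the two endpoints by ``enumerating all $\tau$ with $\sigma^{-1}\circ\tau\in T_n$'' on the grounds that a product of two block transpositions is again a block transposition \emph{only} in the configurations catalogued in Lemma~\ref{lem2oct13} and Lemma~\ref{prop1oct12}. That completeness claim is nowhere established: those lemmas give sufficient conditions (explicit factorisations), not a classification of all adjacent pairs in $\Gamma$, and proving such a classification is a substantial piece of work in its own right. Without it your enumeration of candidate common neighbours is not known to be exhaustive, so the contradiction never closes. The paper avoids any such classification: it first observes (via Lemma~\ref{22marchC2015}, equations \eqref{eq1oct13}) that the toric map $\bar{\f}$, already known to be an automorphism of $\Gamma$, permutes the $n+1$ edges $e_0,\dots,e_n$ in a single cycle, so maximality need only be checked for the one edge $e_n$; and for $e_n$ it uses the exact degree information of Proposition~\ref{prova} together with Lemma~\ref{lem1oct11} (a common neighbour of $\sigma(0,2,n)\in B$ and $\sigma(1,2,n)\in F$ would have to lie in $L\cup F$, would have to be one of the $\sigma(0,2,l)$ by Lemma~\ref{prop1oct12}(iv), yet by the $(1,1)$-biregularity of $(L,F)$ and Lemma~\ref{prop1oct12}(ii) it could only be $\sigma(0,1,2)$ --- a contradiction). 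Those counting results are precisely what certifies that the neighbour lists used are complete, which is the ingredient your sketch lacks.

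A smaller inaccuracy: you frame every edge as $\{\sigma,\sigma^{-1}\}$, but only the edges in $\Lambda$ have mutually inverse endpoints; for $e_{n-2}$, $e_{n-1}$, $e_n$ the endpoints are not inverses of one another (e.g.\ $\sigma(0,2,n)^{-1}=\sigma(0,n-2,n)\neq\sigma(1,2,n)$), so the symmetry ``$j\leftrightarrow k-j+i$'' you intend to exploit is unavailable exactly in the three cases you yourself identify as the most delicate. The edge-verification and pairwise-disjointness parts of your plan are fine and essentially match the paper (the paper dispatches disjointness with one line using $n\ge 5$), but as it stands the maximality argument either needs the missing classification of adjacency in $\Gamma$, or should be replaced by the reduction via $\bar{\f}$ plus the degree counts of Proposition~\ref{prova}.
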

\begin{proof} Since $n\geq 5,$ the above edges are pairwise disjoint.

Now, by \eqref{22march2015}, the following equations
\begin{equation}\label{eq1oct13}
\begin{array}{llllll}
\bar{\f}(\sigma(l,l+1,l+3))&=&\sigma(l-1,l,l+2)& \mbox{ for } l\geq 1;\\
\bar{\f}(\sigma(l,l+2,l+3))&=&\sigma(l-1,l+1,l+2) & \mbox{ for } l\geq 1; \\
\bar{\f}(\sigma(0,1,3))&=&\sigma(0,2,n);\\
\bar{\f}(\sigma(0,2,3))&=&\sigma(1,2,n);\\
\bar{\f}(\sigma(0,2,n)&=&\sigma(1,n-1,n);\\
\bar{\f}(\sigma(1,2,n))&=&\sigma(0,1,n-1);\\
\bar{\f}(\sigma(1,n-1,n)&=&\sigma(0,n-2,n-1); \\
\bar{\f}(\sigma(0,1,n-1))&=&\sigma(0,n-2,n);\\
\bar{\f}(\sigma(0,n-2,n-1))&=&\sigma(n-3,n-2,n); \\
\bar{\f}(\sigma(0,n-2,n))&=&\sigma(n-3,n-1,n).
\end{array}
\end{equation}
hold. This shows that $\bar{\f}$ leaves the set $\Lambda\cup\{e_{n-2},e_{n-1},e_n\}$ invariant acting on it as the cycle permutation
$(e_n,\,e_{n-1},\cdots, e_1,\,e_0).$

Now, it suffices to verify that $e_n$ is a maximal clique of $\Gamma.$ Assume on the contrary that $\sigma=\sigma(i,j,k)$ is adjacent to both $\sigma(1,2,n)$ and $\sigma(0,2,n).$ As $\sigma(0,2,n)\in B,$ Lemma~\ref{lem1oct11} and Proposition~\ref{prova} (i) imply
that $\sigma\in L\cup F$. Also, Proposition~\ref{prova} (i) shows that $\sigma(0,2,n)$ has degree $n-2$ in $L\cup F$. In particular, in the proof of Proposition~\ref{prova} (i), we have seen that $\sigma(0,2,n)$ must be adjacent to $n-3$ vertices of $L,$ as $\sigma(1,2,n)\in F.$ Then, by Lemma~\ref{prop1oct12} (iv), $\sigma=\sigma(0,2,l)$ for some $l$ with $3\leq l < n.$

On the other hand, Proposition~\ref{prova} (ii) shows that $\sigma\in L$ is uniquely determined by $\sigma(1,2,n)\in F,$ and, by Lemma~\ref{prop1oct12} (ii), $\sigma=\sigma(0,1,2),$ a contradiction.
\end{proof}

From now on, $V$ denotes the set of the vertices of the edges $e_m$ with $m$ ranging over $\{0,1,\ldots,n\}.$ For $n=4,$ the edges $e_m$ are not pairwise disjoint, but computations show that they are also edges of maximal cliques of $\Gamma$ of size $2.$

\begin{lemma}\label{lem1oct18}
The toric maps and the reverse map preserve $V.$
 If $n \ge 5,$ then
the toric-reverse group is regular on $V,$ and $\Gamma(V)$ is a vertex-transitive graph.
\end{lemma}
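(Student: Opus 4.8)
The plan is to establish the three claims of Lemma~\ref{lem1oct18} in sequence, leveraging the explicit cycle structure already exhibited in the proof of Proposition~\ref{thm2oct11}. First I would show that $V$ is preserved by the toric maps and the reverse map. For the toric map $\bar{\f}$, the list of equations \eqref{eq1oct13} shows that $\bar{\f}$ permutes the edge set $\Lambda\cup\{e_{n-2},e_{n-1},e_n\}$ as the $(n+1)$-cycle $(e_n,e_{n-1},\ldots,e_1,e_0)$; since $V$ is by definition the union of the vertex sets of these edges, $\bar{\f}(V)=V$, and the same holds for every power of $\bar{\f}$, hence for all of $\bar{\F}$. For the reverse map $\g$, I would use \eqref{eq1oct11}: a direct substitution into the definitions of the $e_m$ shows that $\g$ maps each edge in the list to another edge in the list (one checks, for instance, that $\g$ sends $\sigma(l,l+1,l+3)$ to $\sigma(n-l-3,n-l-1,n-l)$, which is the other endpoint of $e_{n-l-3}$, so $\g(e_l)=e_{n-l-3}$, and similarly traces the three exceptional edges among themselves). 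Hence $\g(V)=V$, and since $\D_{n+1}$ is generated by $\bar{\F}$ and $\g$, the whole toric-reverse group preserves $V$.

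Next, assuming $n\ge 5$, I would prove that $\D_{n+1}$ acts regularly on $V$. By Proposition~\ref{thm2oct11} the $n+1$ edges $e_0,\ldots,e_n$ are pairwise disjoint, so $|V|=2(n+1)=|\D_{n+1}|$; thus it suffices to show the action is transitive, or equivalently that it has trivial point stabilizers, and by a counting argument transitivity alone forces regularity. Transitivity I would get in two steps: $\bar{\f}$ acts on the set of $n+1$ edges as a single $(n+1)$-cycle, so $\bar{\F}$ is already transitive on the edges $\{e_m\}$; and within any one edge $e_m$ the two endpoints are interchanged by some element of $\D_{n+1}$. For this last point, recall from \eqref{eqa18oct} and the remark after the definition of $\Lambda$ that the two endpoints of each $e_m$ are mutually inverse permutations, and the map $\pi\mapsto\pi^{-1}$ restricted to the relevant edges is realized inside $\D_{n+1}$ — concretely, for an edge $e_l\in\Lambda$ with endpoint $\sigma(l,l+1,l+3)$ fixing both $1,\ldots,l$ and the tail, one has $(\f_r(\sigma))^{-1}=\f_r(\sigma^{-1})$ for a suitable $r$ by the special case of \eqref{lem3oct11}, and composing with an appropriate power of $\bar{\f}$ and/or $\g$ produces an element of $\D_{n+1}$ swapping the two endpoints of $e_l$; the three exceptional edges are handled by inspecting \eqref{eq1oct13} directly. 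Combining edge-transitivity of $\bar{\F}$ with endpoint-swapping inside $\D_{n+1}$ yields transitivity on $V$, and then $|V|=|\D_{n+1}|$ upgrades this to regularity.

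Finally, vertex-transitivity of $\Gamma(V)$ is immediate once we know $\D_{n+1}$ acts transitively on $V$: every element of $\D_{n+1}$ is an automorphism of $\Cay$ (Proposition~\ref{prop10oct}) fixing $\iota$, hence an automorphism of $\Gamma$, and since it preserves the subset $V$ it restricts to an automorphism of the induced subgraph $\Gamma(V)$; a transitive group of automorphisms makes $\Gamma(V)$ vertex-transitive.

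The main obstacle I anticipate is the middle step — verifying that for each $e_m$ some element of $\D_{n+1}$ actually interchanges its two endpoints, rather than merely that the endpoints are mutually inverse. The inversion map $\pi\mapsto\pi^{-1}$ is not in general an automorphism of $\Cay$, so one cannot just invoke it; instead one must exhibit the swap as a concrete word in $\bar{\f}$ and $\g$, using the fixed-point hypothesis $\pi_r=r$ in \eqref{lem3oct11} to get inversion "for free" on the right edge and then conjugating that edge back by a power of $\bar{\f}$. Bookkeeping the indices here, especially for the three exceptional edges $e_{n-2},e_{n-1},e_n$ where the pattern of \eqref{eq1oct13} changes, will require care, though each individual check is a short computation. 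An alternative that sidesteps the explicit swap: show directly that the stabilizer in $\D_{n+1}$ of a point of $V$ is trivial by noting that a nontrivial element of $\bar{\F}$ moves every edge $e_m$ (it is an $(n+1)$-cycle with $n\ge 5$), while the reflections in $\D_{n+1}$ — being of the form $\bar{\f}^{\,s}\circ\g$ — can be checked to fix no vertex of $V$; triviality of all point stabilizers plus $|V|=|\D_{n+1}|$ then gives regularity, and transitivity follows.
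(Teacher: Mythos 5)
Your proposal is correct in outline and follows the paper's proof quite closely: preservation of $V$ under $\bar{\F}$ via \eqref{eq1oct13}, preservation under $\g$ via \eqref{eq1oct11}, regularity from transitivity together with $|V|=2(n+1)=|\D_{n+1}|$, and vertex-transitivity of $\Gamma(V)$ by restricting the automorphisms of $\Gamma$ that preserve $V$.

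The one place you diverge, and where you anticipate the ``main obstacle,'' is the transitivity step, and there the obstacle is not really there. The paper's argument is: the equations \eqref{eq1oct13} show not only that $\bar{\f}$ permutes the edges $e_m$ in a single $(n+1)$-cycle, but that it maps first endpoints to first endpoints and second endpoints to second endpoints, so $\bar{\F}$ has exactly two orbits on $V$, each of size $n+1$ and each containing one endpoint of every $e_m$; and the computed $\g$-images (your own computation $\g(e_l)=e_{n-3-l}$ with the two endpoint types exchanged, plus the four exceptional values) show that $\g$ maps one $\bar{\F}$-orbit into the other. Hence $\D_{n+1}=\bar{\F}\cup\bar{\F}\circ\g$ is transitive on $V$ at once, and the counting gives regularity. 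In particular, the element of $\D_{n+1}$ interchanging the two endpoints of a given $e_l$, which you propose to build via the fixed-point case of \eqref{lem3oct11} and inversion, is simply $\bar{\f}^{\,k}\circ\g$ for the power $k$ that carries $e_{n-3-l}$ back to $e_l$; no appeal to \eqref{lem3oct11} or to realizing $\pi\mapsto\pi^{-1}$ is needed, and the index bookkeeping you worry about disappears. Your alternative route via trivial point stabilizers also closes for the same reason: nontrivial elements of $\bar{\F}$ move every edge (and the edges are disjoint for $n\ge 5$), while every reflection $\bar{\f}^{\,s}\circ\g$ exchanges the two endpoint classes and so fixes no vertex of $V$ --- but once one has the two-orbit observation, the direct transitivity argument is shorter. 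So complete the middle step this way and your proof coincides with the paper's.
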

\begin{proof}
Since $\bar{\F}$ is the subgroup generated by $\bar{\f}$, from \eqref{eq1oct13} follows that $\bar{\F}$ preserves $V$ and has two orbits on $V,$ each of them containing one of the two endpoints of the edges $e_m$ with $0\le m \le n.$

In addition, by \eqref{eq1oct11}, the reverse map $\g$ interchanges
the endpoints of $e_m$ with $0\le m\le n-3$ and $m=n-1$ while
$$\begin{array}{lll}
\g(\sigma(0,n-2,n-1))&=&\sigma(1,2,n);\\
\g(\sigma(0,n-2,n)&=&\sigma(0,2,n);\\
\g(\sigma(1,2,n))&=&\sigma(0,n-2,n-1);\\
\g(\sigma(0,2,n))&=&\sigma(0,n-2,n).
\end{array}$$This implies that $\g$ preserves $V$, and
$\D_{n+1}$ acts transitively on $V.$

Now, since $|V|=2(n+1)$ and $\D_{n+1}$ has order $2(n+1)$, then
$\D_{n+1}$ is regular on $V.$
\end{proof}

In the action of $\D_{n+1}$ on $\Sym_n,$
the size of any $\D_{n+1}$-orbit $X$ is a divisor of $2(n+1),$ by the Orbit-Stabilizer Theorem. $X$ is a
\emph{long $\D_{n+1}$-orbit} if $|X|=2(n+1)$.
Now, Lemma~\ref{lem1oct18}  has the following consequence.

\begin{corollary}\label{propB19apr2015}
$V$ is a long $\D_{n+1}$-orbit whenever $n\ge 5$.
\end{corollary}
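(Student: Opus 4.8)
The plan is to derive Corollary~\ref{propB19apr2015} directly from Lemma~\ref{lem1oct18} together with the Orbit--Stabilizer Theorem, so the work is essentially bookkeeping rather than a fresh computation. First I would recall that, by Lemma~\ref{lem1oct18}, both the toric maps and the reverse map preserve the set $V$, hence the whole toric-reverse group $\D_{n+1}$ leaves $V$ invariant; in particular $V$ is a union of $\D_{n+1}$-orbits on $\Sym_n$. Next I would invoke the second part of Lemma~\ref{lem1oct18}: for $n\ge 5$ the group $\D_{n+1}$ acts transitively on $V$. A transitive action on $V$ that is also a union of orbits forces $V$ itself to be a single $\D_{n+1}$-orbit.

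It then remains to check that this orbit has the maximal possible size $2(n+1)$. Here I would simply note that $|V|=2(n+1)$, which was established when the edges $e_0,\dots,e_n$ were shown to be pairwise disjoint for $n\ge 5$ in Proposition~\ref{thm2oct11} (each $e_m$ contributes two distinct vertices, giving $2(n+1)$ vertices in total). Since $|\D_{n+1}|=2(n+1)$ and the action on $V$ is transitive, the Orbit--Stabilizer Theorem gives $|V|=|\D_{n+1}|/|\D_{n+1}{}_v|$ for any $v\in V$, whence the stabilizer $\D_{n+1}{}_v$ is trivial and $|V|=2(n+1)$; equivalently, by the definition of a long $\D_{n+1}$-orbit given just before the corollary, $V$ is a long $\D_{n+1}$-orbit. (Indeed Lemma~\ref{lem1oct18} already records that $\D_{n+1}$ is \emph{regular} on $V$, which is exactly the statement that $V$ is a single orbit of full size $2(n+1)$.)

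There is no real obstacle here: the corollary is an immediate repackaging of the regularity assertion in Lemma~\ref{lem1oct18} in the language of long orbits, and the only thing to be careful about is the hypothesis $n\ge 5$, which is needed both for the transitivity claim and for the edges $e_m$ to be pairwise disjoint (for $n=4$ they overlap, as remarked in the text, so $|V|<2(n+1)$ there). Accordingly I would keep the proof to one or two sentences: \emph{By Lemma~\ref{lem1oct18}, $\D_{n+1}$ preserves $V$ and, for $n\ge 5$, acts transitively on it; since $|V|=2(n+1)=|\D_{n+1}|$, the orbit $V$ has the maximal possible length, i.e.\ $V$ is a long $\D_{n+1}$-orbit.}
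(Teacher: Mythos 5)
Your proposal is correct and follows exactly the paper's (implicit) argument: the corollary is stated as an immediate consequence of Lemma~\ref{lem1oct18}, whose regularity assertion already says that $\D_{n+1}$ acts transitively on the $2(n+1)$-element set $V$, so $V$ is a single orbit of length $2(n+1)=|\D_{n+1}|$, i.e.\ a long orbit. Your extra bookkeeping via the Orbit--Stabilizer Theorem and the disjointness of the edges $e_m$ from Proposition~\ref{thm2oct11} is accurate but just unpacks what the lemma already records.
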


Our next step is to show that the $e_m$ with $0\le m \le n$ are the edges of all maximal cliques of $\Gamma$ of size $2.$ Computations performed by using the package ``grape'' of GAP~\cite{gap} show that the assertion is true for $n =4,5,6.$

\begin{lemma}\label{cor1oct13}
The edge of an arbitrary
maximal clique of $\Gamma$ of size $2$ is one of the edges $e_m$ with $0\le m \le n.$
\end{lemma}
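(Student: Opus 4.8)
The plan is to show that if $e=\{\sigma,\sigma'\}$ is the edge of a maximal clique of size $2$, then $\sigma$ and $\sigma'$ have no common neighbour in $\Gamma$, and then to rule out every configuration except the listed $e_m$. The first reduction is to use the toric-reverse group. By Proposition~\ref{thm2oct11} the set $\Lambda\cup\{e_{n-2},e_{n-1},e_n\}$ is a single orbit under $\bar{\f}$ (acting as the $(n+1)$-cycle $(e_n,e_{n-1},\dots,e_0)$), and these are edges of maximal $2$-cliques; so it suffices to prove that \emph{any} maximal $2$-clique is $\bar{\f}^{\,r}$-equivalent, for some $r$, to one whose edge lies in a convenient ``normal form''. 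Concretely, I would use Lemma~\ref{lemc19ott2013}/Lemma~\ref{22marchC2015}: repeated application of $\bar{\f}$ decreases the first cut point $i$ until it becomes $0$, and then one more application turns $\sigma(0,j,k)$ into $\sigma(j-1,k-1,n)$, an element of $F$. Thus every block transposition is $\bar{\F}$-equivalent to one in $F$ (indeed to one of the form $\sigma(i,j,n)$ with $i\neq 0$), so I may assume $\sigma\in F$, say $\sigma=\sigma(i,j,n)$.

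Next I would exploit the structure theorems of Section~\ref{sec:5} to enumerate the neighbours of $\sigma(i,j,n)$. By Proposition~\ref{prova}(i) a vertex of $F$ has exactly one neighbour in $B$ (namely $\sigma(0,j,n)$, via Lemma~\ref{leaoct19}(i) and Lemma~\ref{prop1oct12}(i)); by Proposition~\ref{prova}(ii) it has exactly one neighbour in $L$ (via Lemma~\ref{prop1oct12}(ii), the vertex $\sigma(0,i,j)$); and by induction on $n$ it has $2n-6$ neighbours inside $\Gamma(F\cup S_{n-2}^\vartriangle)$, whose combinatorial description is governed by Lemma~\ref{lem2oct13}(i)--(v) applied with $k=k'=n$. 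For $e=\{\sigma,\sigma'\}$ to be a maximal $2$-clique, the neighbourhoods $N(\sigma)$ and $N(\sigma')$ must be \emph{disjoint}. The neighbour in $B$ is $\sigma(0,j,n)$ for $\sigma$ and $\sigma(0,j',n)$ for $\sigma'$, and Lemma~\ref{lem2oct13}(v) together with Lemma~\ref{leaoct19} forces any neighbour $\sigma'$ of $\sigma$ with $\sigma'\in F$ to share cut-point data with $\sigma$ in one of the five patterns of Lemma~\ref{lem2oct13}; a short case analysis of each pattern shows that disjointness of the two $B$-neighbours and the two $L$-neighbours, plus the absence of a common neighbour inside $S_{n-2}^\vartriangle$ (using Lemma~\ref{lem1oct11} and the inductive description of $\Gamma(S_{n-2}^\vartriangle)$), pins $\{i,j\}$ down to the very small set of values that, after translating back by $\bar{\f}$, yields precisely the edges $e_{n-2},e_{n-1},e_n$ together with the members of $\Lambda$ of the form $e_l$, $0\le l\le n-3$. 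The base cases $n=4,5,6$ are handled by the GAP computation already cited.

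The main obstacle I expect is the case analysis inside $\Gamma(F\cup S_{n-2}^\vartriangle)$: there $\sigma(i,j,n)$ has many neighbours, and one must verify that whenever $\sigma'$ is adjacent to $\sigma$ but the pair is \emph{not} one of the $e_m$, the two vertices do have a common neighbour. The cleanest way to organise this is probably to split on whether $\sigma'\in B$, $\sigma'\in L$, or $\sigma'\in F\cup S_{n-2}^\vartriangle$: the first is impossible by Lemma~\ref{lem1oct11} (since $\sigma\in F$), the second is governed entirely by Proposition~\ref{prova}(ii) and Lemma~\ref{prop1oct12}(ii)--(iii), and the third reduces, by the embedding $T_{n-1}\hookrightarrow T_n$ and Corollary~\ref{lem2oct11}, to an edge of $\Gamma$ on at most $n-1$ points, to which the inductive hypothesis applies after checking that a maximal $2$-clique of $\Gamma$ restricting to $F\cup S_{n-2}^\vartriangle$ stays maximal only for the edges $e_l$. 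Keeping careful track of which neighbours are ``new'' at level $n$ versus inherited from level $n-1$ is the delicate bookkeeping step; once that is in place, the enumeration closes.
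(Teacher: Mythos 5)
Your overall strategy is recognizably the same as the paper's (induction on $n$ with computer-checked base cases $n=4,5,6$, use of the toric--reverse symmetry to normalize, the partition $B\cupdot L\cupdot F\cupdot S_{n-2}^\vartriangle$, and ruling out non-$e_m$ edges by producing a common neighbour), but the decisive step is missing rather than merely compressed. First, you treat Lemma~\ref{lem2oct13} as if it \emph{characterized} adjacency in $\Gamma$: you write that any neighbour $\sigma'$ of $\sigma(i,j,n)$ ``must share cut-point data with $\sigma$ in one of the five patterns of Lemma~\ref{lem2oct13}''. That lemma only gives sufficient conditions for an edge, and nothing in the paper (or in your sketch) establishes the converse; without a complete description of the neighbourhoods, the proposed ``disjointness of $N(\sigma)$ and $N(\sigma')$'' analysis cannot even be set up. The paper's proof deliberately avoids needing such a characterization: it combines the uniqueness/degree statements (Proposition~\ref{prova}, Lemma~\ref{leaoct19}) with the inductive hypothesis applied to the induced subgraphs $\Gamma(T_{n-1})$ and (via $\g$) $\Gamma(S_{n-1}^\triangledown)$.

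Second, the enumeration that carries the weight of the proof is never carried out. The substantive cases are the mixed edges: an $F$--$L$ edge is forced to be $\{\sigma(i,j,n),\sigma(0,i,j)\}$ and has the common neighbour $\sigma(i,j,n-1)$ unless $j=n-1$, in which case one must pass to $\g(e)=\{\sigma(0,n-j,n-i),\sigma(n-j,n-i,n)\}$ and find the common neighbour there; and a $B$--$F$ edge with $B$-endpoint $\sigma(0,j,n)$ forces $j>2$ and then lies in the triangle on $\sigma(0,j,n),\sigma(i',j,n),\sigma(\bar{i},j,n)$ by Lemma~\ref{lem2oct13}(iii). None of this appears in your proposal beyond ``a short case analysis shows''. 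Likewise, in the inductive case (edge inside $T_{n-1}$, or inside $S_{n-1}^\triangledown$ after applying $\g$) the real content is to check that the three exceptional level-$(n-1)$ edges, which are \emph{not} among the level-$n$ edges $e_m$, acquire a common neighbour at level $n$ (the paper adjoins $\sigma(0,n-3,n)$, $\sigma(1,n-2,n)$, $\sigma(2,n-1,n)$); your phrase ``after checking that a maximal $2$-clique \ldots stays maximal only for the edges $e_l$'' is precisely the statement that needs proof, not a routine verification. A smaller point: your normalization ``every block transposition is $\bar{\F}$-equivalent to one in $F$'' is true but requires an argument (orbits can pass through $B$, e.g.\ $\bar{\f}(\sigma(0,1,n))=\sigma(0,n-1,n)$), and the paper's proof does not need it, since it normalizes only up to the four parts of the partition using $\g$.
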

\begin{proof}
On the contrary take an edge $e$ of a maximal clique of $\Gamma$ of size $2$ other than the edges $e_m$.
 We proceed by induction on $n$ and assume
$n\ge 7$. Computations show that the lemma holds for $n=4,5,6$.
First, suppose that $e$ is an edge of
$\Gamma(T_{n-1})$. Then by induction on $n,$ $e$ must be one of
the following edges:
\begin{enumerate}[]
\item $\{\sigma(0,n-3,n-2),\sigma(0,n-3,n-1,)\};$
\item $\{\sigma(1,n-2,n-1),\sigma(0,1,n-2,)\};$
\item $\{\sigma(0,2,n-1),\sigma(1,2,n-1)\}.$
\end{enumerate}
Now, by Lemma~\ref{lem2oct13}, it is straightforward to check that these edges extend to a clique of
$\Gamma$ of size $3$ by adding the vertices
$\sigma(0,n-3,n), \sigma(1,n-2,n),$ and $\sigma(2,n-1,n).$
From Corollary~\ref{lem2oct11} and Lemma~\ref{lem1oct18}, it also follows
that $e$ cannot be an edge of $\Gamma(S_{n-1}^\triangledown)$.

Suppose $e$ connects a vertex in $F$ with
a vertex in $L$.  By Proposition~\ref{prova} (ii) and Lemma~\ref{prop1oct12} (ii), the edge $e$ is of the form $\{ \sigma(i,j,n),\sigma(0,i,j)\}$. Notice that $(i,j)\ne (1,n-1),$ as $e\ne e_{n-1}$. If $j \ne n-1,$ then the vertex
$\sigma(i,j,n-1)$ is adjacent to the endpoints of $e,$
see Lemma~\ref{lem2oct13}, a contradiction. Let $j=n-1$. Then $i > 1$.
Consider the image $\g(e)$. By \eqref{eq1oct11},  $\g(e)=\{ \sigma(0,n-j,n-i),\sigma(n-j,n-i,n)\}$. Now, $n-i < n-1,$ and we conclude that $\g(e)$ extends
to a clique of size $3$ by adding the vertex $\sigma(n-j,n-i,n-1),$ a
contradiction.

We are left with the case when $e$ has one endpoint in $B$.
Notice that the other endpoint of $e$ cannot be in $B,$ as $B$ is a
clique. Furthermore, by Corollary~\ref{lem2oct11}  and Lemma~\ref{lem1oct18}, we may assume that the other endpoint of $e$ is in $F$.

Now, let the endpoint of $e$ be in $B$ with $e=\sigma(0,j,n)$ for some $1\leq j\leq n-1.$ Then, by the proof of the first assertion of Proposition~\ref{prova} (i), the vertex $\sigma(0,j,n)$ is adjacent to $\sigma(\bar{i},j,n)$ for any $0\le \bar{i} < j.$
Since $\sigma(0,j,n)$ is adjacent to a vertex in $F$ and
$e \ne e_n,$  $j > 2$ holds.
As the vertices $\sigma(\bar{i},j,n)$ for $0\le \bar{i} < j$ are adjacent, by Lemma~\ref{lem2oct13} (iii), $e$ is an edge of the triangle of vertices $\sigma(0,j,n),\,\sigma(i',j,n),$ and $\sigma(\bar{i},j,n)$ with $i'\neq\bar{i}$ and $0 < 
i',\bar{i} < j,$ a contradiction.
\end{proof}

Lemma~\ref{cor1oct13} shows that $V$ consists of the endpoints of the edges of $\Gamma$ which are the edges of maximal cliques of size $2.$ Thus $\Gamma(V)$ is relevant for the study of $\Cay.$ We show some properties of $\Gamma(V).$

\begin{proposition}\label{lemgoct12}
$\Gamma(V)$ is a $3$-regular graph whenever $n\ge 5$.
\end{proposition}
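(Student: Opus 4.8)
The plan is to show that every vertex of $\Gamma(V)$ has exactly three neighbours in $V$. By Lemma~\ref{lem1oct18}, when $n\ge 5$ the toric-reverse group $\D_{n+1}$ acts transitively on $V$, and it acts as automorphisms of $\Gamma$, hence as automorphisms of $\Gamma(V)$. Therefore it suffices to compute the degree in $\Gamma(V)$ of a single, conveniently chosen vertex; I would pick one endpoint of $e_n$, say $\sigma(0,2,n)\in B$, or the ``generic'' vertex $\sigma(l,l+1,l+3)$ appearing in the definition of $\Lambda$. The first step is thus to reduce to this single orbit computation.

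The second step is to exhibit three neighbours. One neighbour is immediate: the other endpoint of the edge $e_m$ containing the chosen vertex is adjacent to it by construction (Proposition~\ref{thm2oct11} and the definition of $\Lambda$, via Lemma~\ref{lem2oct13}(v)). For the remaining two, I would use the explicit product formulas of Lemma~\ref{lem2oct13} and Lemma~\ref{prop1oct12} to write down two further block transpositions in $V$ adjacent to the chosen vertex; the cyclic action of $\bar{\f}$ on the $e_m$ recorded in \eqref{eq1oct13} makes it easy to keep track of which vertices of $V$ are ``adjacent'' in the cyclic arrangement, and one expects each vertex to be joined to endpoints of the two neighbouring edges $e_{m\pm 1}$. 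So the picture to verify is that $\Gamma(V)$ looks like a ``prism-type'' graph on the $2(n+1)$ vertices: the $n+1$ clique-edges $e_m$, plus edges linking consecutive $e_m$'s.

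The third and main step is the upper bound: no vertex of $V$ has a fourth neighbour inside $V$. This is the delicate part. Here I would argue that any vertex of $\Gamma(V)$ adjacent to three other vertices of $V$ cannot have a fourth, because a fourth neighbour would, together with suitable existing neighbours, force a triangle or larger clique inside $\Gamma$ contradicting the fact (Lemma~\ref{cor1oct13}) that the $e_m$ are precisely the edges of the maximal $2$-cliques. Concretely: if $v\in V$ had neighbours $x,y,z,w\in V$, then since each of $x,y,z,w$ lies on some clique-edge $e_m$ and $v$ itself lies on one $e_{m_0}$, I would examine the possible cut-point patterns using Lemma~\ref{lem2oct13} and rule out the extra adjacency by showing it produces a common neighbour of the two endpoints of some $e_m$ — i.e. a $3$-clique extending $e_m$ — contradicting maximality. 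The main obstacle is the case analysis needed to handle the three ``special'' edges $e_{n-2},e_{n-1},e_n$ and the vertices in $B$ versus those in $L\cup F\cup S_{n-2}^\vartriangle$ uniformly; the transitivity of $\D_{n+1}$ on $V$ is what ultimately collapses all these cases into one, so the cleanest route is to do the adjacency bookkeeping only for the single orbit representative and invoke Lemma~\ref{lem1oct18} for the rest.
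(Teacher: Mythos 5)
Your first two steps match the paper exactly: reduce to a single vertex via the regular action of $\D_{n+1}$ on $V$ (Lemma~\ref{lem1oct18}), and exhibit three explicit neighbours of the representative using Lemma~\ref{lem2oct13}; the paper takes $v=\sigma(0,2,n)$ and finds $\sigma(0,2,3)$, $\sigma(1,2,n)$, $\sigma(0,n-2,n)$. (Incidentally, your ``prism'' picture is not quite right: the third neighbour of the $e_n$-endpoint $\sigma(0,2,n)$ lies on $e_{n-2}$, not on a cyclically adjacent edge $e_{m\pm1}$; but since you only offered this as an expectation, it is not the real issue.)

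The genuine gap is in your upper bound. You propose to exclude a fourth neighbour $w\in V$ of $v$ by arguing that the extra adjacency would ``produce a common neighbour of the two endpoints of some $e_m$,'' contradicting the maximality of the $2$-cliques. That implication does not hold: an edge of $\Gamma(V)$ joining vertices that lie on different clique-edges creates no triangle containing any $e_m$ unless the new neighbour happens to be adjacent to \emph{both} endpoints of that $e_m$, and nothing forces this. Indeed $\Gamma(V)$ already contains many such cross edges (two per vertex, since it is cubic), e.g.\ $\sigma(0,2,n)$ is adjacent to $\sigma(0,2,3)\in e_0$ but not to the other endpoint $\sigma(0,1,3)$ of $e_0$; a hypothetical fourth cross edge would be no more contradictory than the existing ones, so maximality of the $e_m$ (Proposition~\ref{thm2oct11}, Lemma~\ref{cor1oct13}) cannot close the argument. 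The paper instead gets the upper bound from the partition $T_n=B\cupdot L\cupdot F\cupdot S_{n-2}^\vartriangle$ and the degree information attached to it: the representative is chosen in $B$, Lemma~\ref{lem1oct11} rules out any neighbour in $V\cap S_{n-2}^\vartriangle$, Proposition~\ref{prova}(i) says each vertex of $L\cup F$ has a \emph{unique} neighbour in $B$ (so one checks, via Lemma~\ref{prop1oct12} and Lemma~\ref{leaoct19}, that among the vertices of $V\cap(L\cup F)$ only $\sigma(0,2,3)$ and $\sigma(1,2,n)$ have $\sigma(0,2,n)$ as that neighbour), and $V\cap B$ contains only one further vertex, $\sigma(0,n-2,n)$. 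You would need this partition/bipartite-degree bookkeeping (or an equivalent count) to make the bound of $3$ stick; as written, your step three would fail.
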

\begin{proof} First, we prove the assertion for the endpoint $v=\sigma(0,2,n)$ of $e_{n}.$ By Lemma~\ref{lem2oct13} (i) (iii) (v), $\sigma(0,2,3),\sigma(1,2,n),$ and $\sigma(0,n-2,n)$ are neighbors of $v$. Since $\sigma(1,2,n)\in F$ and $\sigma(0,2,3)\in L,$ from the first assertion of Proposition~\ref{prova} (i), $v\in B$ is not adjacent to any other vertex in either $V\cap F$ or $V\cap L.$ Also, Lemma~\ref{lem1oct11} yields that no vertex in $V\cap S_{n-2}^\vartriangle$ is adjacent to $\sigma(0,2,n).$ Thus, $v$ has degree $3$ in $\Gamma(V).$

Now the claim follows from Lemma~\ref{lem1oct18}.
\end{proof}

\begin{remark} By a famous conjecture of Lov\'asz, every finite, connected, and  vertex-transitive graph contains a Hamiltonian cycle, except the five known counterexamples, see~\cite{Ba,L}.
Then, the second assertion of Lemma~\ref{lem1oct18} and Proposition~\ref{thm1oct13} show that the Lov\'asz conjecture holds for the graph $\Gamma(V).$
\end{remark}

\begin{proposition}\label{thm1oct13}
$\Gamma(V)$ is a Hamiltonian graph whenever $n\geq 5.$
\end{proposition}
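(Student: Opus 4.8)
The plan is to first pin down the graph $\Gamma(V)$ completely, and then to read a Hamiltonian cycle off its structure. For $n\ge 5$ the edges $e_0,\ldots,e_n$ are pairwise disjoint (Proposition~\ref{thm2oct11}), so $V$ is the set of their $2(n+1)$ distinct endpoints. By the proof of that proposition $\bar{\f}$ permutes these edges as the cycle $(e_n,e_{n-1},\ldots,e_1,e_0)$, i.e.\ $\bar{\f}(e_m)=e_{m-1}$ with subscripts read modulo $n+1$; and by Lemma~\ref{lem1oct18} the group $\bar{\F}$ has exactly two orbits on $V$, each meeting every $e_m$ in a single point. Accordingly I would write $e_m=\{a_m,b_m\}$, where $a_m$ lies in the $\bar{\F}$-orbit of $\sigma(0,2,n)$ and $b_m$ in the other orbit; since $\bar{\f}$ preserves orbits and sends $e_m$ to $e_{m-1}$, we get $\bar{\f}(a_m)=a_{m-1}$ and $\bar{\f}(b_m)=b_{m-1}$ for all $m$.

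The heart of the argument is to describe all adjacencies of $\Gamma(V)$. For the vertex $a_n=\sigma(0,2,n)$ this is already contained in the proof of Proposition~\ref{lemgoct12}: its three neighbours in $\Gamma(V)$ are $\sigma(1,2,n)$, $\sigma(0,2,3)$ and $\sigma(0,n-2,n)$. Using the explicit formulas \eqref{eq1oct13} one places each of these in the above labelling. The vertex $\sigma(1,2,n)$ is the second endpoint of $e_n$, hence it is $b_n$. From $\bar{\f}(\sigma(0,1,3))=\sigma(0,2,n)$ we get $\sigma(0,1,3)=a_0$, so the second endpoint of $e_0$ is $\sigma(0,2,3)=b_0$. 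From $\bar{\f}(\sigma(0,2,n))=\sigma(1,n-1,n)$ and $\bar{\f}(\sigma(1,n-1,n))=\sigma(0,n-2,n-1)$ we get $a_{n-2}=\sigma(0,n-2,n-1)$, so the second endpoint of $e_{n-2}$ is $\sigma(0,n-2,n)=b_{n-2}$. Thus $a_n$ is adjacent precisely to $b_n$, $b_0$ and $b_{n-2}$. Since the powers of $\bar{\f}$ are automorphisms of $\Gamma(V)$ sending $a_m\mapsto a_{m-1}$ and $b_m\mapsto b_{m-1}$, applying them yields that, for every $m$, the neighbours of $a_m$ in $\Gamma(V)$ are exactly $b_m$, $b_{m+1}$ and $b_{m-2}$, all indices modulo $n+1$. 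In particular $\Gamma(V)$ is bipartite with parts $\{a_0,\ldots,a_n\}$ and $\{b_0,\ldots,b_n\}$; concretely it is the bipartite circulant on $\Z_{n+1}$ in which $a_m$ is joined to $b_m$, $b_{m+1}$ and $b_{m-2}$.

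With this description a Hamiltonian cycle is immediate. Using only the ``partner'' edges $\{a_m,b_m\}$ (the $e_m$ themselves) and the edges $\{a_m,b_{m+1}\}$, consider the closed walk
$$a_0,\; b_1,\; a_1,\; b_2,\; a_2,\; \ldots ,\; a_{n-1},\; b_n,\; a_n,\; b_0,\; a_0 .$$
Consecutive vertices are adjacent --- alternately by an edge $\{a_m,b_{m+1}\}$ and by the partner edge $\{a_{m+1},b_{m+1}\}$, the last stretch $a_n,b_0,a_0$ being of the same two kinds since $b_{n+1}=b_0$ --- and because the $e_m$ are pairwise disjoint the $2(n+1)$ vertices listed are exactly those of $V$, each appearing once. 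Hence $\Gamma(V)$ is Hamiltonian, as claimed.

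The only genuine work is the bookkeeping of the second paragraph: reading off correctly from \eqref{eq1oct13} which endpoint of each $e_m$ lies in the $\bar{\F}$-orbit of $\sigma(0,2,n)$, so that the pattern $a_m\sim\{b_m,b_{m+1},b_{m-2}\}$ is obtained without index errors; once that is in place the cycle above works uniformly for every $n\ge 5$. As a cross-check, and an alternative conclusion, one may observe that by Lemma~\ref{lem1oct18} the dihedral group $\D_{n+1}$ acts regularly on $V$ through automorphisms of $\Gamma(V)$, so that $\Gamma(V)$ is a connected cubic Cayley graph on $\D_{n+1}$; Hamiltonicity then also follows from the theorem of Alspach and Zhang~\cite{AZ} on cubic Cayley graphs over dihedral groups.
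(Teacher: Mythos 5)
Your proposal is correct, and it reaches the goal by a somewhat different route than the paper. The paper simply writes down two explicit paths ($\mathcal{P}$ through the vertices of $\Lambda$ and $\mathcal{P}'$ through the remaining eight vertices) and verifies each consecutive adjacency directly from the multiplication identities of Lemma~\ref{lem2oct13}. You instead determine the whole graph $\Gamma(V)$: labelling the endpoints of $e_m$ by the two $\bar{\F}$-orbits (proof of Lemma~\ref{lem1oct18}), using \eqref{eq1oct13} to get $\bar{\f}(a_m)=a_{m-1}$, $\bar{\f}(b_m)=b_{m-1}$, and transporting the single neighbourhood computation already done for $\sigma(0,2,n)$ in Proposition~\ref{lemgoct12} by the automorphisms $\bar{\f}^{\,r}$ to obtain $a_m\sim\{b_m,b_{m+1},b_{m-2}\}$ for all $m$; your bookkeeping ($b_n=\sigma(1,2,n)$, $b_0=\sigma(0,2,3)$, $b_{n-2}=\sigma(0,n-2,n)$) is accurate. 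This buys a complete structural description of $\Gamma(V)$ at the cost of one neighbourhood plus equivariance, and avoids repeated edge checks; amusingly, the cycle you then read off, $a_0,b_1,a_1,\ldots,b_n,a_n,b_0,a_0$, is (up to starting point) exactly the cycle the paper constructs. Two small remarks: the statement that $\bar{\F}$ has two orbits each meeting every $e_m$ once is proved inside Lemma~\ref{lem1oct18}, so you are entitled to it; and your stronger claim that $\Gamma(V)$ is precisely the bipartite circulant also needs the (immediate) observation that each $b_m$ already has three $a$-neighbours $a_m,a_{m-1},a_{m+2}$, so $3$-regularity rules out $b$--$b$ edges --- though this is not needed for the Hamiltonian cycle itself. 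Your closing alternative via Alspach--Zhang~\cite{AZ} is exactly the remark the paper makes after the proposition.
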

\begin{proof}
Let $v_1=\sigma(n-4,n-3,n-1),\quad v_2=\sigma(n-4,n-2,n-1)$ be the endpoints of $e_{n-4}$. We start by exhibiting a path $\mathcal{P}$ in $V$ beginning with $\sigma(0,2,3)$ and ending with $v_1$ that visits all vertices $\sigma(l,l+1,l+3),\sigma(l,l+2,l+3)\in\Lambda$ with $0\leq l\leq n-4.$

For $n=5,\,v_1=\sigma(1,2,4),$ and $$\mathcal{P}=\sigma(0,2,3),\sigma(0,1,3),\sigma(1,3,4),v_1.$$

Assume $n>5.$ For every $l$ with $0\leq l\leq n-4,$ Lemma~\ref{lem2oct13} (ii) (v) show that both edges below are incident to $\sigma(l,l+1,l+3)$:
$$\{\sigma(l,l+1,l+3),\sigma(l+1,l+3,l+4)\},\quad\{\sigma(l,l+2,l+3),\sigma(l,l+1,l+3)\}.$$Therefore,
$$\begin{array}{ll}
\sigma(0,2,3),\sigma(0,1,3),\sigma(1,3,4),\ldots,\sigma(l,l+2,l+3),\sigma(l,l+1,l+3),\\\sigma(l+1,l+3,l+4),\ldots,\,v_1
\end{array}$$is a path $\mathcal{P}$ with the requested property.

By Lemma~\ref{lem2oct13}, there also exists a path $\mathcal{P'}$ beginning with $v_1$ and ending with $\sigma(0,2,3)$ which visits the other vertices of $V,$ namely
$$\begin{array}{lll}
v_1,\sigma(n-3,n-1,n),\sigma(n-3,n-2,n),\sigma(0,n-2,n),\sigma(0,n-2,n-1),\\\sigma(0,1,n-1),\sigma(1,n-1,n),\sigma(1,2,n),
\sigma(0,2,n),\sigma(0,2,3).
\end{array}$$
Since $n\ge 5,$
the vertices are all pairwise distinct.
Therefore, the union of $\mathcal{P}$ and $\mathcal{P'}$ is a cycle in $V$ that visits all vertices. This completes the proof.
\end{proof}

\begin{remark}
 By Proposition~\ref{lemgoct12} and Lemma~\ref{lem1oct18},
Proposition~\ref{thm1oct13} also follows from a result of Alspach and Zhang~\cite{AZ} who proved that all cubic Cayley graphs on dihedral groups have Hamilton cycles.
\end{remark}

\section{The proofs of Theorem~1 and Theorem~2}
\label{sec:6}

We are in a position to give a proof of Theorem~\ref{main1}. From Proposition~\ref{prop10oct}, the toric-reverse group
$\D_{n+1}$ induces a subgroup of $\Aut(\Gamma)$.
Also, $\textsf{D}_{n+1}$ is regular on $V,$ by the second assertion of Lemma~\ref{lem1oct18}. Therefore, Theorem~\ref{main1} is a corollary of the following lemma.

\begin{lemma}
\label{lemfoct12} The identity is the only automorphism of $\Gamma$ fixing a vertex of $V$ whenever $n\geq 5.$
\end{lemma}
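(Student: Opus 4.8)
The plan is to show that an automorphism $\theta$ of $\Gamma$ fixing a vertex $v\in V$ must fix every vertex of $\Gamma$, proceeding by induction on $n$ with the small cases $n=4,5,6$ settled by the GAP computations already mentioned. The central structural tool is the partition $T_n = B\cupdot L\cupdot F\cupdot S_{n-2}^\vartriangle$ of Lemma~\ref{lem1oct9} together with the rigidity results of Section~\ref{sec:5}: $B$ is the unique maximal clique of size $n-1$ meeting an edge of $\Gamma(B)$ (Corollary~\ref{cor3oct13ter}), the bipartite structure between $B$ and $L\cup F$ and between $L$ and $F$ is $(n-2)$-regular resp.\ $(1,1)$-biregular (Proposition~\ref{prova}), and $V$ consists exactly of the endpoints of the edges $e_m$, $0\le m\le n$, which are the edges of the maximal $2$-cliques (Lemma~\ref{cor1oct13}). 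Since $\D_{n+1}$ is regular on $V$ and contained in $\Aut(\Gamma)$, after composing $\theta$ with a suitable element of $\D_{n+1}$ I may assume $\theta$ fixes a convenient representative, say $v=\sigma(0,2,n)\in B$, the endpoint of $e_n$.

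First I would pin down the image of the clique $B$. Because $v=\sigma(0,2,n)\in B$ and $\theta$ preserves maximal clique sizes, $\theta(B)$ is again a clique of size $n-1$; by Corollary~\ref{cor3oct13ter} and Proposition~\ref{prova}(i) the only such clique through $v$ whose members have the right neighbourhood pattern into $L\cup F$ is $B$ itself, so $\theta(B)=B$. Next, among the three neighbours of $v$ in $\Gamma(V)$ (namely $\sigma(0,2,3)\in L$, $\sigma(1,2,n)\in F$, and $\sigma(0,n-2,n)\in B$, from Proposition~\ref{lemgoct12}), the one lying in $B$ is distinguished, and the edge $\{\sigma(0,2,n),\sigma(1,2,n)\}=e_n$ is the unique maximal $2$-clique at $v$, so $\theta$ fixes $\sigma(1,2,n)$ and hence, by the involutive structure within $e_n$ (the endpoints are mutual inverses), the whole edge $e_n$ pointwise. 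Then I would walk along the Hamiltonian-type path structure exhibited in the proof of Proposition~\ref{thm1oct13}: each $e_m$ is joined to $e_{m\pm1}$ in a rigid way (the explicit adjacencies from Lemma~\ref{lem2oct13} used there), and since $\theta$ must fix $e_n$ pointwise and preserve the set of maximal $2$-cliques, an easy propagation argument forces $\theta$ to fix every $e_m$, $0\le m\le n$, pointwise; in particular $\theta$ fixes all of $V$.

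With $V$ fixed pointwise, I would bootstrap to the rest of $\Gamma$ using the embeddings $T_{n-1}\hookrightarrow T_n$ and $S_{n-1}^\triangledown\hookrightarrow T_n$ and the induction hypothesis. Concretely: $\theta$ fixes $\sigma(0,2,n)\in B$ together with its neighbours $\sigma(0,2,3)\in L$ and $\sigma(1,2,n)\in F$; since $\Gamma(T_{n-1})$ and $\Gamma(S_{n-1}^\triangledown)$ are induced subgraphs isomorphic to block transposition graphs on $n-1$ points (via $\bar\sigma$ and $\sigma'$), and each contains fixed vertices of $\theta$ lying in its own copy of "$V$", the induction hypothesis applied inside these subgraphs forces $\theta$ to fix $T_{n-1}$ and $S_{n-1}^\triangledown$ pointwise, hence $B\cup L\cup F\cup S_{n-2}^\vartriangle = T_n$ pointwise. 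Here one must be careful that $\theta$ genuinely restricts to an automorphism of the relevant induced subgraph — this is immediate since $T_{n-1}$, $S_{n-1}^\triangledown$ are $\theta$-invariant once enough of their distinguished vertices are fixed, using Lemma~\ref{lem1oct11} and Corollary~\ref{lem2oct11} to control how $\theta$ treats $B$, $L$, $F$, $S_{n-2}^\vartriangle$.

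The main obstacle I anticipate is the bookkeeping in the propagation step: showing that fixing $e_n$ pointwise, rather than merely setwise, together with preservation of the maximal $2$-clique set, really does force every $e_m$ to be fixed pointwise. This requires checking that no nontrivial automorphism can "rotate" or "reflect" the cyclic arrangement $(e_n,e_{n-1},\dots,e_0)$ while fixing one $e_m$ — i.e.\ that the local adjacency data from Lemma~\ref{lem2oct13} around each $e_m$ is asymmetric enough to break any such symmetry — and then that fixing $V$ pointwise suffices to trigger the induction. The rest is routine once the base cases are verified computationally and the invariance of the two "$(n-1)$-point" subgraphs under $\theta$ is established.
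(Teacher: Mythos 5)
There are genuine gaps at the two places where your sketch defers the work. First, the step that pins down the three $\Gamma(V)$-neighbours of $v=\sigma(0,2,n)$ individually is not justified. Your argument for $\theta(B)=B$ appeals to ``the right neighbourhood pattern into $L\cup F$'', but at that stage nothing guarantees that $\theta$ respects the partition $B\cupdot L\cupdot F\cupdot S_{n-2}^\vartriangle$, so this pattern is not a $\theta$-invariant; and Corollary~\ref{cor3oct13ter} only asserts uniqueness of the maximal $(n-1)$-clique \emph{through an edge of $\Gamma(B)$}, which you may invoke only after you already know that $\theta$ fixes a second vertex of $B$ (there are other $(n-1)$-cliques in $\Gamma$, e.g.\ $\{\sigma(0,1,k)\mid 2\le k\le n\}$, so ``$\theta(B)$ is an $(n-1)$-clique through $v$'' does not by itself force $\theta(B)=B$). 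The paper resolves exactly this point by proving that the clique $C=\{\sigma(0,2,k)\mid 3\le k\le n\}$ of size $n-2$ is \emph{maximal}, so the two edges $\{\sigma(0,2,n),\sigma(0,n-2,n)\}$ and $\{\sigma(0,2,n),\sigma(0,2,3)\}$ lie in maximal cliques of different sizes and cannot be swapped; that argument is missing from your proposal. Likewise the ``easy propagation'' around the cyclic arrangement of the $e_m$'s is asserted, not proved: with the generic adjacencies of $\Gamma(V)$ (an endpoint $\sigma(l,l+1,l+3)$ is joined to $\sigma(l+1,l+3,l+4)$ and to $\sigma(l-2,l,l+1)$, besides its clique partner), a naive two-fixed-neighbours propagation skips whole residue classes of indices and its success depends on the seam behaviour of $e_{n-2},e_{n-1},e_n$; indeed the paper does \emph{not} fix $V$ by propagation but fixes only the vertices near $e_n,e_{n-2},e_{n-3},e_0$ directly, obtains the vertices of $e_l$ for $0<l<n-3$ from the induction hypothesis, and rules out a swap of the endpoints of $e_{n-1}$ by a separate no-common-neighbour argument.

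Second, the bootstrap from ``$V$ fixed pointwise'' to ``$T_n$ fixed pointwise'' does not work as you describe. You apply the induction hypothesis inside $T_{n-1}$ and $S_{n-1}^\triangledown$, but there is no argument that these vertex sets are $\theta$-invariant: Corollary~\ref{lem2oct11} concerns only the reverse map, and Lemma~\ref{lem1oct11} only separates $B$ from $S_{n-2}^\vartriangle$ (also note $T_{n-1}\cup S_{n-1}^\triangledown$ misses $B$ altogether). The only sub-block-transposition graph whose invariance can be derived at this stage is $S_{n-2}^\vartriangle$: once $\theta(B)=B$ is known, Lemma~\ref{lem1oct11} and Proposition~\ref{prova} give $\theta(S_{n-2}^\vartriangle)=S_{n-2}^\vartriangle$, and the induction therefore descends by two (whence the base cases $n=5,6$). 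More importantly, even after $V$ and $S_{n-2}^\vartriangle$ are fixed pointwise, fixing $L\cup B$ is not automatic: the paper shows that the stabilizer $H$ coincides with the kernel of the action of $\Aut(\Gamma)$ on $V$, hence is normal in $\Aut(\Gamma)$, and then transports the fixed vertices of $S_{n-2}^\vartriangle$ to $L\cup B$ by conjugating with toric maps, using the explicit toric-equivalence equations \eqref{eqa14oct}; finally $F$ is fixed via the $(1,1)$-biregular bipartite graph $(L,F)$ of Proposition~\ref{prova}~(ii). None of this machinery (normality, toric conjugation, the bipartite step) appears in your sketch, and without it the conclusion does not follow. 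A minor point: the reduction to $v=\sigma(0,2,n)$ should be phrased via conjugacy of vertex stabilizers under the transitive group $\D_{n+1}$, not by composing $\theta$ with an element of $\D_{n+1}$.
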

\begin{proof} We prove the assertion by induction on $n.$ Computation shows that the assertion is true for $n=5,6.$ Therefore, we assume $n\geq 7.$

First, we prove that any automorphism of $\Gamma$ fixing a vertex $v\in V$ actually fixes all vertices in $V$.
Since $\D_{n+1}$ is regular on $V,$ we may
limit ourselves to take $\sigma(0,2,n)$ for $v.$ Let $H$ be the
subgroup of all elements in $\Aut(\Gamma)$ fixing
$\sigma(0,2,n).$

By Lemma~\ref{cor1oct13}, $H$ preserves
$\Gamma(V)$. Now, we look inside the action of $H$ on $\Gamma(V)$ and show that
$H$ fixes the edge $\{\sigma(0,2,n),\sigma(0,n-2,n)\}.$
By Proposition~\ref{lemgoct12}, $\Gamma(V)$ is $3$-regular. More precisely, the endpoints of the edges of $\Gamma(V)$ which are incident with $\sigma(0,2,n)$ are $\sigma(0,2,3),\,\sigma(1,2,n),$ and $\sigma(0,n-2,n),$ see Lemma~\ref{prop1oct12} (i) (iii) (v). Also, by Proposition~\ref{thm2oct11}, the edge $e_n
=\{\sigma(0,2,n),\sigma(1,2,n)\}$ is the edge of a maximal clique of $\Gamma$ of size $2$, and no two distinct edges of maximal cliques of $\Gamma$ of size $2$ have a common vertex. Thus, $H$ fixes $\sigma(1,2,n).$ Now, from Corollary~\ref{cor3oct13ter}, the edge $\{\sigma(0,2,n),\sigma(0,n-2,n)\}$ lies in a unique maximal clique of size $n-1$. By Lemma~\ref{lem2oct13} (i), the edge $\{\sigma(0,2,n),\sigma(0,2,3)\}$ lies on a clique of size $n-2$ whose set of vertices is $\{\sigma(0,2,k) \mid 3\le k\le n\}.$ Here, we prove that $C$ is a maximal clique.

By the first assertion of Proposition~\ref{prova} (i), $\sigma(0,2,3)$ is adjacent to a unique vertex in $B$, namely $\sigma(0,2,n).$ On the other hand, among the $2(n-2)$ neighbors of $\sigma(0,2,n)$ off $B$ exactly one vertex is off $C,$ namely $\sigma(0,1,2)$.
Since this vertex is not adjacent to $\sigma(0,2,3),$
we conclude that
$C$ does not extend to a clique of size $n-1$. Therefore, $H$ cannot interchange the edges
$\{\sigma(0,2,n),\sigma(0,n-2,n)\}$ and $\{\sigma(0,2,n),\sigma(0,2,3)\}$ but fixes both.

Also, by Proposition~\ref{lemgoct12} and Lemma~\ref{prop1oct12} (i) (iii), $\sigma(0,n-2,n)$ is adjacent to $\sigma(0,n-2,n-1)$ and $\sigma(n-3,n-2,n).$ Since $e_{n-2}$ is the edge of a maximal clique of $\Gamma$ of size $2$, $H$ fixes $e_{n-2}=\{\sigma(0,n-2,n-1),\sigma(0,n-2,n)\}$. This together with what we have proven so far shows that $H$ fixes $\sigma(n-3,n-2,n),$ and then the edge $e_{n-3}=\{\sigma(n-3,n-2,n),\sigma(n-3,n-1,n)\}.$

Now, as the edge $\{\sigma(0,2,n),\sigma(0,n-2,n)\}$ is in $\Gamma(B)$, Corollary~\ref{cor3oct13ter} implies that
$H$ preserves $B.$ Therefore, $H$ preserves also the
set of all vertices adjacent to a vertex in $B$, which implies that $H$
preserves $S_{n-2}^\vartriangle$ as well.
And, as $H$ fixes $\{\sigma(0,2,n),\sigma(0,2,3)\}$, $H$ must fix $\sigma(0,2,n)\in B$ and $\sigma(0,2,3)\notin B.$ Also, $e_0=\{\sigma(0,1,3),\sigma(0,2,3)\}$ is preserved by $H$, as we have seen above. Therefore,  $\sigma(0,1,3)$ is also fixed by $H.$ Furthermore, $\sigma(2,3,5)\in S_{n-2}^\vartriangle$ is adjacent to $\sigma(0,2,3)$ in $\Gamma(V)$, by Proposition~\ref{lemgoct12} and Lemma~\ref{prop1oct12} (ii); and then it is fixed by $H$, as $H$ preserves $S_{n-2}^\vartriangle$.
Therefore, we have that $H$ induces an automorphism group of $\Gamma(S_{n-2}^\vartriangle)$ fixing a vertex $\sigma(2,3,5)\in S_{n-2}^\vartriangle.$ Then $H$ fixes every block transpositions in $S_{n-2}^\vartriangle\cong T_{n-2},$ by the inductive hypothesis. In particular, $H$ fixes all the vertices in $V \cap S_{n-2}^\vartriangle,$ namely all vertices in $\Lambda$ belonging to $e_l$ with $0<l<n-3.$

This together with what proven so far shows that $H$ fixes all vertices of $V$ with only two possible exceptions, namely the endpoints of the edge $e_{n-1}=\{\sigma(1,n-1,n),\sigma(0,1,n-1)\}.$ In this exceptional case, $H$ would swap $\sigma(0,1,n-1)$ and $\sigma(1,n-1,n).$ Actually, this exception cannot occur since $\sigma(0,1,n-1)$ and $\sigma(1,n-1,n)$ do not have a common neighbor, and $H$ fixes their neighbors in $V.$ Therefore, $H$ fixes every vertex in $V$.
Hence, $H$ is the kernel of the permutation representation of
$\Aut(\Gamma)$ on $V$. Thus $H$ is a normal subgroup of $\Aut(\Gamma)$.

Our next step is to show that the block transpositions in $L\cup B$ are also fixed by $H$. Take any block transposition $\sigma(0,j,k).$ Then the toric class of $\sigma(0,j,k)$ contains a block transposition $\sigma(i',j',k')$ from $S_{n-2}^\vartriangle$. This is a consequence of the equations below which are obtained by using \eqref{22march2015}
\begin{equation}\label{eqa14oct}
\begin{array}{llll}
\bar{\f}^{\,2}(\sigma(0,j,k))&=&\sigma(j-2,k-2,n-1),& j\geq 3; \\
\bar{\f}^{\,3}(\sigma(0,1,k))&=&\sigma(k-3,n-2,n-1),&k\geq 4;\\
\bar{\f}^{\,4}(\sigma(0,1,2))&=&\sigma(n-3,n-2,n-1);&{}\\
\bar{\f}^{\,5}(\sigma(0,1,3))&=&\sigma(n-4,n-3,n-1);&{}\\
\bar{\f}^{\,4}(\sigma(0,2,k))&=&\sigma(k-4,n-3,n-1),& k\geq 5;\\
\bar{\f}^{\,5}(\sigma(0,2,3))&=&\sigma(n-4,n-2,n-1);&{}\\
\bar{\f}^{\,6}(\sigma(0,2,4))&=&\sigma(n-5,n-3,n-1).&{}\\
\end{array}
\end{equation}
Since $\sigma(i',j',k')\in S_{n-2}^\vartriangle,$ we know that
$H$ fixes $\sigma(i',j',k').$ From this we infer that $H$ also fixes $\sigma(0,j,k).$ In fact, as $\sigma(0,j,k)$ and $\sigma(i',j',k')$ are torically equivalent, $\bar{u}(\sigma(i',j',k'))=\sigma(0,j,k)$ for some
$\bar{u}\in\bar{\F}$. Take any $h\in H.$

As $H$ is a normal subgroup of $\Aut(\Gamma),$ 
there exists $h_1\in H$ such that $\bar{u}\circ h_1=h \circ \bar{u}.$
Hence
$$\sigma(0,j,k)=\bar{u}(\sigma(i',j',k'))=
(\bar{u}\circ h_1)
(\sigma(i',j',k'))=(h 
\circ\bar{u})(\sigma(i',j,',k')),$$
whence $\sigma(0,j,k)=h
(\sigma(0,j,k)).$ Therefore, $H$ fixes every block transposition in
$L\cup B.$

Also, this holds true for $F,$ by the second assertion of Proposition~\ref{prova}. Thus, by Lemma~\ref{lem1oct9}, $H$ fixes
every block transposition. This completes the proof.
\end{proof}

\begin{remark}
Lemma~\ref{lemfoct12} yields Theorem~\ref{main1} for $n\geq 5.$ For $n=4,$ computations performed by using the package ``grape'' of GAP~\cite{gap} show that Theorem~\ref{main1} is also true.
\end{remark}

As a corollary to Theorem~\ref{main1}, we obtain
\begin{equation}\label{eq14sept}
\Aut(\Cay)= L(\Sym_n)(N 
\rtimes \D_{n+1}),
\end{equation}
where $N$ denotes the kernel of
the stabilizer of $\iota$ in $\Aut(\Cay)$ acting on
$T_n=V(\Gamma)$. In fact, we are going to derive
Theorem~\ref{main2} by showing that $N$ is a trivial group.

The proof of Theorem~\ref{main2} requires a property of the set $V$ that involves the alternating group $\Alt_n$ on $[n]$.

\begin{lemma}\label{10apr2015}
Let  $H$ be the subgroup of $\Sym_n$ generated by the permutations in $V$. Then
 $H$ is either $\Sym_n$ or $\Alt_n$ according to
as $n$ is odd or even.
\end{lemma}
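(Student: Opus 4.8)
The plan is to pin down the parity of the permutations in $V$ and show they generate a transitive (indeed $2$-transitive) subgroup of $\Sym_n$, then invoke a classical classification result. First I would compute the sign of each generator. A block transposition $\sigma(i,j,k)$ swaps two adjacent blocks of lengths $a=j-i$ and $b=k-j$ without permuting entries inside a block, so as a permutation it is conjugate to the block-swap on $a+b$ points fixing the other $n-(a+b)$ points; its sign is $(-1)^{ab}$. Applying this to the three ``exceptional'' edges $e_{n-2},e_{n-1},e_n$ of Proposition~\ref{thm2oct11} and to a generic $e_l\in\Lambda$ (where the two endpoints are $\sigma(l,l+1,l+3)$ and $\sigma(l,l+2,l+3)$, with block length products $1\cdot 2=2$ and $2\cdot 1=2$, both even), one sees that every vertex in $V$ with block lengths $a,b$ has $ab$ even \emph{except} possibly those where both blocks have odd length. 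A quick check of the explicit list shows which endpoints are even and which are odd; in particular not all of $V$ lies in $\Alt_n$ precisely when $n$ is odd (so $ab$ can be odd for a block transposition moving all $n$ points, e.g.\ when the two blocks have odd lengths summing to $n$), whereas when $n$ is even every element of $V$ turns out to be even. This already forces $H\le \Alt_n$ when $n$ is even and $H\not\le\Alt_n$ when $n$ is odd.

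Next I would establish that $H$ is transitive on $[n]$, and in fact primitive or $2$-transitive. Transitivity is cheap: the cyclic subgroup $\langle\beta\rangle$ with $\beta=\sigma(0,1,n)$ acts as an $n$-cycle on $[n]$ (it is the permutation $[2\,3\,\cdots\,n\,1]$), and $\sigma(0,2,n)\in V$ generates this same cyclic group together with $\sigma(0,1,n)$? — more directly, $e_n$ contains $\sigma(0,2,n)$, and one checks $\sigma(0,2,n)$ and $\sigma(1,2,n)$ together already move enough points; alternatively $e_{n-1}$ and $e_n$ between them give an $n$-cycle. So $H$ is transitive. For primitivity/$2$-transitivity, I would exhibit in $V$ (or in the group generated) a transposition or a $3$-cycle: for instance when the relevant block lengths are $(1,2)$ or $(2,1)$ the block transposition is a $3$-cycle, and $e_l=\{\sigma(l,l+1,l+3),\sigma(l,l+2,l+3)\}$ consists of two $3$-cycles moving $\{l+1,l+2,l+3\}$; their product/combination yields a shorter cycle. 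Getting a $3$-cycle inside the transitive group $H$, by Jordan's theorem (a primitive group containing a $3$-cycle contains $\Alt_n$), would finish things — but I still need primitivity, which I would get by noting that $H$ contains the $n$-cycle $\beta$ and an element that cannot preserve the block system of a proper divisor, or more simply by directly producing a $3$-cycle and a transitive action and appealing to the classical fact that a transitive group generated by $3$-cycles is $\Alt_n$ or $\Sym_n$ (for $n\ge 5$), combined with the sign computation above to decide which.

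A cleaner route, which I would actually prefer, avoids primitivity: show directly that $H$ contains $\Alt_n$ by exhibiting enough $3$-cycles whose supports overlap to generate $\Alt_n$. From $\Lambda$ we get, for each $l$ with $0\le l\le n-3$, elements supported on $\{l+1,l+2,l+3\}$ (the two endpoints of $e_l$ are the two $3$-cycles $(l+1\;l+2\;l+3)^{\pm1}$, or at worst products of such). The $3$-cycles $(l+1\;l+2\;l+3)$ for $l=0,\dots,n-3$ are the standard generators of $\Alt_n$, so $\langle V\rangle\supseteq\Alt_n$. Then the sign computation decides the rest: if $n$ is even, every element of $V$ is even (verify each of the extra edges $e_{n-2},e_{n-1},e_n$ has both endpoints of sign $+1$), so $H=\Alt_n$; if $n$ is odd, at least one vertex in $V$ is an odd permutation, so $H=\Sym_n$. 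I expect the main obstacle to be purely bookkeeping: correctly reading off the cycle structure (hence the sign and the support) of each of the handful of special block transpositions $\sigma(0,n-2,n-1)$, $\sigma(0,n-2,n)$, $\sigma(1,n-1,n)$, $\sigma(0,1,n-1)$, $\sigma(0,2,n)$, $\sigma(1,2,n)$ appearing in $e_{n-2},e_{n-1},e_n$, and confirming that in the $\Lambda$-edges one genuinely recovers $3$-cycles (not longer cycles) so that the standard generating set of $\Alt_n$ really sits inside $\langle V\rangle$; the small cases $n=4$ (where the $e_m$ are not disjoint) should be checked separately, consistently with the excerpt's use of GAP for small $n$.
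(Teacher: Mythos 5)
Your preferred route is correct and is essentially the paper's own argument: the paper likewise observes that the $\Lambda$-endpoints $\sigma(l,l+1,l+3)$, $l=0,\dots,n-3$, generate $\Alt_n$ (proving this standard fact by induction rather than quoting it), and then settles the parity of the six remaining vertices of $V$ (via their cycle structure, e.g.\ $\sigma(0,1,n-1)$ and $\sigma(1,2,n)$ are $(n-1)$-cycles and $\sigma(0,2,n)=\sigma(0,1,n)^2$, rather than your $(-1)^{ab}$ formula) to conclude $H=\Alt_n$ for $n$ even and $H=\Sym_n$ for $n$ odd. Your preliminary detour through transitivity and Jordan's theorem is unnecessary, as you yourself note, but the sign formula and the final argument are sound.
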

\begin{proof} By the definition of $V$, the block transpositions $\sigma_l=\sigma(l,l+1,l+3)$ with $l=0,\ldots,n-3$ are all in $V$. First, we prove by induction on $n$ that they generate $\Alt_n$. If $n=3$, then $\sigma_0=[2\,3\,1]$, and hence it has order $3$. Therefore $\langle\sigma_0\rangle=\Alt_3$.

Let $n\geq 4$. Since $\sigma_i$ with $0\le i \le n-4$ fixes $n$, we may assume by induction that $\langle \sigma_0,\sigma_1,\ldots,\sigma_{n-4}\rangle=\Alt_{n-1}$.
Since $\sigma_{n-3}=\sigma(n-3,n-2,n)=[1\,\cdots\,(n-1)\, n\, (n-2)],$ 
by \eqref{eq22ott12}, we have $\sigma_{n-3}\in\Alt_n$.
Furthermore $\sigma_{n-3}$ moves $n$. Therefore $\langle \sigma_0,\ldots,\sigma_{n-3}\rangle=\Alt_n$.

The remaining six block transpositions of $V$ are
$\sigma(0,1,n-1),
\,\sigma(1,2,n),$\\ $\sigma(0,2,n),$ and their inverses. By \eqref{eq22ott12},
both $\sigma(0,1,n-1)$ and $\sigma(1,2,n)$ are cycles
of length $n-1,$ and
$\sigma(0,2,n)=\sigma(0,1,n)^2.$ 
Then $\sigma(0,2,n)\in\Alt_n$, while
$\sigma(0,1,n),\,\sigma(0,2,n)\in \Alt_n$ if and only if $n$ is even. Therefore, $H=\Alt_n$ if $n$ is even, and
$H=\Sym_n$ otherwise.
This completes the proof.
\end{proof}

Before presenting our proof of Theorem~\ref{main2} we need
one more auxiliary lemma.

\begin{lemma}\label{L2}
Let $U$ be a symmetric $\D_{n+1}$-orbit contained in $T_n$, and let $t$ be an
automorphism of $\Cay$ fixing point-wise an edge
$\{k,l\}$ of $\UCay$. If $U$ is a long orbit, then $t(\pi) = \pi$ whenever $\pi$ lies on the same component of $k$ in $\UCay$.
\end{lemma}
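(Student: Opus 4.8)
The plan is to argue by induction along a path in $\UCay$ starting from the fixed edge $\{k,l\}$, exploiting the fact that $U$ is a long $\D_{n+1}$-orbit to transfer the ``fixing'' property from one edge to the next. First I would set up the Cayley graph $\UCay$ where $U$ is the symmetric $\D_{n+1}$-orbit, so that $\{k,l\}$ being an edge means $l = k\circ u$ for some $u\in U$, and $t$ fixes both $k$ and $l$. The goal is to propagate $t(\pi)=\pi$ to every vertex $\pi$ in the connected component of $k$ in $\UCay$. The natural mechanism is: if $t$ already fixes two adjacent vertices $x$ and $y$ of $\UCay$ with $y = x\circ u$, then I want to conclude that $t$ fixes all the $\UCay$-neighbors of $x$ (or at least enough of them to keep the induction going), and then move along an arbitrary path.

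The key step is the local argument at a fixed edge. Here is where $U$ being a \emph{long} orbit enters: because $\D_{n+1}$ is regular on $U$ (length $2(n+1)$), the set $U$ together with the edges of the clique structure of $\Gamma(U)$ — in particular the maximal cliques of size $2$ studied in Lemma~\ref{cor1oct13} and the long-orbit property of $V$ in Corollary~\ref{propB19apr2015} — gives a rigid local configuration around each vertex that an automorphism fixing one edge cannot scramble. Concretely, I would consider the neighborhood of $k$ in $\UCay$: since $t$ is a graph automorphism of $\Cay$ it permutes these neighbors, it already fixes $l$, and the rigidity of the induced subgraph on $U$ (coming from Theorem~\ref{main1}, i.e.\ $\Aut(\Gamma)=\D_{n+1}$ acting regularly on the relevant long orbit) forces $t$ to fix each neighbor individually rather than permute them nontrivially. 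This is essentially the same mechanism as in the proof of Lemma~\ref{lemfoct12}: an automorphism fixing one vertex of a long $\D_{n+1}$-orbit, together with the clique/edge data, is pinned down completely.

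Then I would do the induction: let $\pi$ be an arbitrary vertex in the component of $k$ in $\UCay$, choose a path $k = x_0, x_1, \dots, x_m = \pi$ with consecutive vertices joined by edges $x_{i+1} = x_i \circ u_i$, $u_i \in U$. Using the base case (the edge $\{x_0,x_1\} = \{k,l\}$ is fixed point-wise) and the local step, I propagate: from $t$ fixing the edge $\{x_{i-1},x_i\}$ I get $t$ fixing all neighbors of $x_i$, in particular $x_{i+1}$, and I also get $t$ fixing enough of the local structure at $x_i$ to know the \emph{edge} $\{x_i, x_{i+1}\}$ is fixed point-wise — which is what the next step needs. Iterating along the path yields $t(\pi)=\pi$.

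The main obstacle I anticipate is the local step — proving that fixing one incident edge forces $t$ to fix \emph{every} neighbor of that vertex in $\UCay$, not merely to stabilize the neighbor set. This requires using the internal combinatorics of $\Gamma(U)$: the fact that $U$ is a long orbit means the stabilizer in $\D_{n+1}$ of a vertex of $U$ is trivial, so there is no ``symmetry'' of the local picture that $t$ could realize; but to make this bite on an \emph{arbitrary} automorphism $t$ of $\Cay$ (a priori not in $\D_{n+1}$) I will need to invoke Theorem~\ref{main1} — that $\Aut(\Gamma) = \D_{n+1}$ — applied to the restriction of $t$ to $\Gamma = \Cay(\Sym_n, T_n)$, or more precisely to identify how $t$ acts on $T_n$ near the edge. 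A secondary technical point is making sure the induction hypothesis is stated at the level of edges (fixed point-wise), not just vertices, so that the step is genuinely re-applicable; I would phrase the inductive claim as ``$t$ fixes point-wise every edge of $\UCay$ lying in the component of $k$,'' which is equivalent to the stated conclusion once connectivity is used.
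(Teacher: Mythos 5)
Your proposal is correct and follows essentially the same route as the paper: the local step is precisely the paper's observation that an automorphism fixing a vertex together with one of its $\UCay$-neighbours must, by Theorem~\ref{main1} combined with the regular (hence fixed-point-free) action of $\D_{n+1}$ on the long orbit $U$, fix all of that vertex's $\UCay$-neighbours, and the conclusion then propagates along paths with the inductive hypothesis carried on point-wise fixed edges, just as you state. The only detail you leave implicit is how Theorem~\ref{main1} is brought to bear at a vertex $k\neq\iota$: the paper conjugates $t$ by the left translation $L_k$, so that $h=L_{k^{-1}}\circ t\circ L_k$ fixes $\iota$, preserves $T_n$, and hence acts on $T_n$ as an element of $\D_{n+1}$ fixing a point of $U$, which forces it to be trivial on $T_n$.
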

\begin{proof}
For the sake of simplicity, let $A=\Aut(\Cay)$.
For $\pi\in\Sym_n,$ let $A_\pi$ denote the vertex stabilizer of $\pi$ in
$A;$ furthermore, let $\widehat{A}_\pi$ denote the subgroup of
$A_\pi$ consisting of those elements fixing also each neighbor of $\pi$ in $\UCay$, that is
$$\widehat{A}_\pi =\big\{ t \in A \, \mid \,
t(\rho)=\rho \text{ if } \rho=\pi \text{ or }
\rho \sim \pi \text{ in } \UCay \big\}.$$
Since $U$ is a long $\D_{n+1}$-orbit contained in $T_n$ and
$A_\iota$
acts on $T_n$ as $\D_{n+1}$, we have
\begin{equation}\label{hat}
\mbox{$h \in A_\iota,
\, h(\nu)= \nu$
and  $\nu \in U$ imply $h \in
\widehat{A}_{\iota}$}.
\end{equation}
Now, let us consider an automorphism $t\in A$
fixing both endpoints of the edge $\{k,l\}$ of $\UCay.$
Observe that 
$A_k = L_k\circ A_{\iota}\circ L_{k^{-1}}.$ We show that $t \in \widehat{A}_k$.

Let $h=L_{k^{-1}}\circ t\circ L_k,$ 
and let $\nu = k^{-1}\circ l$.
Thus, $h \in A_\iota, 
\,\nu \in U$, and $h(\nu)=\nu$. Hence
$h \in  \widehat{A}_{\iota},$ 
by \eqref{hat}. In particular, if $\rho$ is any neighbor of $k$, then $h(k^{-1}\circ\rho)=k^{-1}\circ\rho.$ Therefore,
$$t(\rho)=(L_k\circ h\circ L_{k^{-1}})(\rho)=k\circ h(k^{-1}\circ\rho)=k\circ(k^{-1}\circ \rho)=\rho$$ showing that
$t \in  \widehat{A}_{k}$. 
This also yields that $t(\pi)=\pi$
whenever there is a path from $k$ to $\pi$.
\end{proof}

Now, we are in a position to prove Theorem~\ref{main2}.
\medskip

\noindent{\it Proof of Theorem~\ref{main2}.}
We have to show that the kernel $N$ given in \eqref{eq14sept} is
trivial Assume on the contrary that $N$ is non-trivial, that is, some non-trivial element of 
$\Aut(\Cay)$ fixes every vertex in $T_n$. Choose such an element
$t$, and take for $U$ the set $V$, as in Lemma~\ref{L2}.
Also, choose an element $v\in U$. Then $\iota$ and $v$ are adjacent in $\UCay$, and Lemma~\ref{L2} applies to $k=\iota,\,l=v$. It turns out that $t$ fixes
not only all block transpositions, but also every permutation $\pi\in\Sym_n$ whenever $v$ and $\pi$ are in the same component in $\UCay$. The latter property means that $\pi \in H$, where
$H$ is the subgroup of $\Sym_n$ generated by $U$.

Then $t$ must fix every permutation in $\Alt_n$ and $n$ must be even, by Lemma~\ref{10apr2015}. Actually, $t$ also fixes
 two adjacent odd permutations, for instance $\sigma(0,1,2)$
and $\sigma(1,2,3)$. Therefore, $t$ fixes every permutation in $\Sym_n$, that is, $t$ is the trivial element of
$\Aut(\Cay)$, a contradiction. \hfill $\Box$

\section{Regular Cayley maps and the toric groups}
\label{sec:7}

Throughout this section, we adopt the notation used in \cite{CJT,KK}
and restrict our attention to Cayley maps with simple underlying
graphs.

In particular, let $\Cayg(G,X)$  be a connected Cayley graph
for a finite group $G,$ and let $p$ be a cyclic permutation of $X$. Then, the \emph{Cayley map} $\CM(G,X,p)$ is the $2$-cell
embedding of the graph $\Cayg(G,X)$ in an orientate surface, where   the orientation induced by a local ordering of the arcs emanating
from any vertex $g\in G$ is always the same as the order of
the generators in $X$ induced by $p$. 
In other words, if $X$ has $k$ elements, 
then the neighbors of any vertex $g$  are always spread counterclockwise around $g$ in
the order $(gx,gp(x),\ldots,gp^{k-1}(x))$.

For an \emph{arc} of $\Cayg(G,X)$ we consider an ordered pair
$(u,v)$ of vertices such that $\{u,v\}$ is an edge of $\Cayg(G,X)$.
And, we will also refer to the arcs of $\Cayg(G,X)$ as
the \emph{darts} of the Cayley map.

In order to define the faces,
we need two permutations of the dart
set, namely the \emph{rotation} $R$ which maps the dart
$(g,gx)$ to the dart $(g,gp(x))$ and the \emph{dart-reversing involution} $T$ which swaps $(g,xg)$ with $(xg,g)$. The product
$R\circ T$ divides the darts into certain orbits, and the
cycle sequences of vertices appearing in these orbits define the
\emph{faces} of $\CM(G,X,p)$.

\begin{example}\label{octahedron}
Let $\M=\CM(\Sym_3,X,p)$ be the Cayley map, where
\begin{eqnarray*}
X &=& \{\sigma(0,1,3),\sigma(0,2,3),\sigma(0,1,2),\sigma(1,2,3)\}{\color{blue};} \\
p &=& \big(\sigma(0,1,3),\sigma(0,2,3),\sigma(1,2,3),\sigma(0,1,2)\big).
\end{eqnarray*}
The $24$ darts of $\M$ are partitioned in
$8$ faces, and each of them forms a triangle.
The resulting map is the
well-known octahedron embedded in the sphere, see Fig.~1.
\end{example}

\begin{figure}[ht!]
\centering
\begin{tikzpicture}[thick,scale=4.5]
\coordinate (A1) at (0,0);
\coordinate (A2) at (0.6,0.2);
\coordinate (A3) at (1,0);
\coordinate (A4) at (0.4,-0.2);
\coordinate (B1) at (0.5,0.5);
\coordinate (B2) at (0.5,-0.5);

\begin{scope}[thick,dashed,,opacity=0.6]
\draw (A1) -- (A2) -- (A3);
\draw (B1) -- (A2) -- (B2);
\end{scope}
\draw (A1) -- (A4) -- (B1);
\draw (A1) -- (A4) -- (B2);
\draw (A3) -- (A4) -- (B1);
\draw (A3) -- (A4) -- (B2);
\draw (B1) -- (A1) -- (B2) -- (A3) --cycle;
\draw (B1) node[above]{$\iota=[1\,2\,3]$};
\draw (A1) node[left]{$\sigma(0,1,3)=[2\,3\,1]$};
\draw (A4) node[left]{$\sigma(0,2,3)=[3\,1\,2]$};
\draw (A3) node[right]{$\sigma(1,2,3)=[1\,3\,2]$};
\draw (A2) node[right]{$\sigma(0,1,2)=[2\,1\,3]$};
\draw (B2) node[below]{$[3\,2\,1]$};
\end{tikzpicture}
\caption{The octahedron as a regular Cayley map on $\Sym_3$.}
\end{figure}
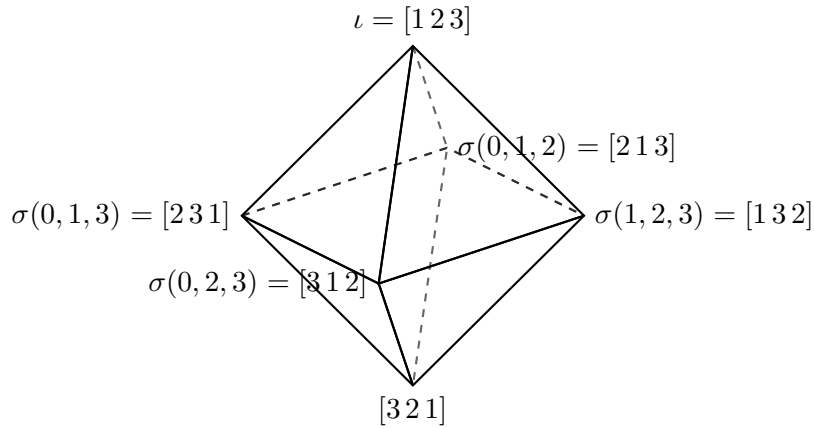

For an \emph{automorphism} of a Cayley map $\M=\CM(G,X,p)$ we refer to a bijective map $\varphi$ of the vertex set which preserves both the dart and the face sets, equivalently, both $\varphi\circ R=R\circ\varphi$ and $\varphi\circ T=T\circ\varphi$ hold, where $R$ and $T$
are the rotation and dart-reversing involution associated with $\M$.
Here, the group of all automorphisms of $\M$ is denoted by $\Aut(\M)$. This group always acts semiregularly on the darts of $\M$, and 
when the action is also transitive (and hence regular), we name
$\M$ a \emph{regular Cayley map}.

Jajcay and \v{S}ir\'a\v{n} \cite{JS} characterized regular Cayley
maps using skew-morphisms. For an arbitrary finite group $G,$ let $\psi$ be a permutation of $G$ of order $r,$ and let $\pi$ be any function from $G$ to $\{0,\ldots,r-1\}$.  The permutation $\psi$ is a
\emph{skew-morphism}  of $G$ with \emph{power function} $\pi$
if $\psi(1_G)=1_G$ and $\psi(xy)=\psi(x)\psi^{\pi(x)}(y)$ for
all $x,y \in G$ ($1_G$ denotes the identity element of $G$).
The relation between regular Cayley maps and skew-morphisms
is captured in the following theorem.

\begin{theorem}[Jajcay and \v{S}ir\'a\v{n} \cite{JS}]\label{JS}
A Cayley map $\CM(G,X,p)$ is regular if and only if there exists a
sew-morphisms $\psi$ of $G$ such that $p(x)=\psi(x)$ for
all $x\in X$.
\end{theorem}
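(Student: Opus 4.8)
\medskip
\noindent\emph{Proof proposal.} The plan is to prove the two implications separately, using at the outset the standard \emph{semiregularity lemma} for maps: any automorphism of $\M=\CM(G,X,p)$ fixing a single dart fixes every dart, hence every vertex, and so is the identity. Indeed, such an automorphism commutes with $R$ and $T$, so fixing one dart $(u,v)$ forces it to fix $R(u,v),R^2(u,v),\dots$ and $T(u,v)=(v,u)$, and then all darts by connectedness of $\Cayg(G,X)$. Thus $\M$ is regular exactly when $\Aut(\M)$ contains a subgroup of order $|G|\cdot|X|$, equivalently a subgroup transitive on the darts. I would also record that every left translation $L_g\colon v\mapsto gv$ is an automorphism of $\M$, since $L_g$ visibly commutes with both $R$ and $T$; hence $L(G)\le\Aut(\M)$ in all cases. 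Write $k=|X|$ and $X=\{x_1,\dots,x_k\}$ with $p=(x_1,\dots,x_k)$.

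For the forward implication, assume $\M$ is regular, so $\Aut(\M)$ is regular on darts. Then the vertex stabiliser $\Aut(\M)_{1_G}$ is transitive, hence (by the semiregularity lemma) regular, on the $k$ darts $(1_G,x_1),\dots,(1_G,x_k)$. An element $\theta\in\Aut(\M)_{1_G}$ commutes with $R$, so if $\theta(1_G,x_1)=(1_G,x_j)$ then $\theta(1_G,x_i)=(1_G,x_{i+j-1})$ for all $i$; hence $\Aut(\M)_{1_G}=\langle\psi\rangle$ is cyclic of order $k$, where $\psi$ is the unique automorphism with $\psi(1_G,x_i)=(1_G,x_{i+1})$, i.e.\ the vertex permutation $\psi$ fixes $1_G$ and satisfies $\psi(x)=p(x)$ for all $x\in X$. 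Since $L(G)\cap\langle\psi\rangle=\{1\}$ and $|L(G)|\cdot|\langle\psi\rangle|=|G|\cdot k=|\Aut(\M)|$, we get $\Aut(\M)=L(G)\langle\psi\rangle$ with unique factorisation. Consequently, for each $g\in G$ the automorphism $\psi\circ L_g$ can be written uniquely as $L_{g'}\circ\psi^{\pi(g)}$ with $g'\in G$ and $\pi(g)\in\{0,\dots,k-1\}$; evaluating at $1_G$ gives $g'=\psi(g)$, and evaluating at an arbitrary $h\in G$ gives $\psi(gh)=\psi(g)\,\psi^{\pi(g)}(h)$. Together with $\psi(1_G)=1_G$ this says $\psi$ is a skew-morphism of $G$ with power function $\pi$, and $p(x)=\psi(x)$ for all $x\in X$.

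For the reverse implication, suppose $\psi$ is a skew-morphism of $G$ with power function $\pi$ and $p(x)=\psi(x)$ for all $x\in X$. Since $p$ is a cyclic permutation of $X$, the map $\psi$ permutes $X$, so $\psi^m(x)\in X$ for every $m\ge 0$ and $x\in X$. I would first verify that $\psi$, acting on darts by $(u,v)\mapsto(\psi(u),\psi(v))$, is an automorphism of $\M$: it sends darts to darts because $\psi(gx)=\psi(g)\,\psi^{\pi(g)}(x)$ with $\psi^{\pi(g)}(x)\in X$; it commutes with $R$ because $\psi^{\pi(g)}(p(x))=\psi^{\pi(g)+1}(x)=p\bigl(\psi^{\pi(g)}(x)\bigr)$; and it commutes with $T$, which merely reverses edges. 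Moreover $\psi$ fixes $1_G$ and carries $(1_G,x_i)$ to $(1_G,p(x_i))$, so it realises the rotation at $1_G$. Hence $\langle L(G),\psi\rangle\le\Aut(\M)$ is transitive on darts, since the base dart $(1_G,x_1)$ is mapped to an arbitrary dart $(g,gx_i)$ by $L_g\circ\psi^{\,i-1}$; by the semiregularity lemma this subgroup has order exactly $|G|\cdot k$ and acts regularly, so $\M$ is regular.

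I expect the only delicate point to be the bookkeeping in the forward implication: showing that $\Aut(\M)_{1_G}$ is cyclic, generated by the rotation-realising automorphism $\psi$, and that the factorisation $\Aut(\M)=L(G)\langle\psi\rangle$ is unique so that the power function $\pi$ is well defined. The remainder reduces to the semiregularity lemma together with routine manipulation of the identity $\psi(xy)=\psi(x)\psi^{\pi(x)}(y)$.
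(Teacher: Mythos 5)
Your proposal addresses a statement that the paper itself does not prove: Theorem~\ref{JS} is quoted from Jajcay and \v{S}ir\'a\v{n} \cite{JS} as a known result, so there is no internal proof to compare against. Judged on its own, your argument is correct and is essentially the standard proof of this theorem. Both directions are sound: the semiregularity lemma (an automorphism commuting with $R$ and $T$ and fixing a dart fixes the whole $\langle R,T\rangle$-orbit, i.e.\ everything, by connectedness) is applied correctly; in the forward direction the vertex stabiliser of $1_G$ is indeed regular and cyclic on the $k$ darts at $1_G$, generated by the rotation-realising automorphism $\psi$, and the exact factorisation $\Aut(\M)=L(G)\langle\psi\rangle$ with $L(G)\cap\langle\psi\rangle=\{1\}$ makes the power function $\pi(g)$ well defined, after which evaluating $\psi\circ L_g=L_{\psi(g)}\circ\psi^{\pi(g)}$ at an arbitrary $h$ yields precisely the skew-morphism identity; in the reverse direction the check that $(u,v)\mapsto(\psi(u),\psi(v))$ preserves darts and commutes with $R$ and $T$ is complete, using that $\psi$ restricts to $p$ on $X$ and hence preserves $X$, and transitivity of $\langle L(G),\psi\rangle$ on darts combined with semiregularity of $\Aut(\M)$ gives regularity. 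Two minor bookkeeping points you implicitly use and could state explicitly: the order of $\psi$ as a permutation of $G$ equals $k=|X|$, so $\pi$ takes values in $\{0,\dots,k-1\}$ as the definition of a skew-morphism requires; and $\psi^m(x)=p^m(x)$ for $x\in X$ (by induction), which is what makes $L_g\circ\psi^{\,i-1}$ carry the base dart to $(g,gx_i)$. Neither is a gap of substance.
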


The skew-morphism $\psi$ in the above theorem and its
power function are determined uniquely by the regular Cayley map $\CM(G,X,p).$  Therefore, we will refer to them as the skew-morphism and the power function associated with
$\CM(G,X,p)$. Also, the automorphism group of 
$\CM(G,X,p)$ satisfies 
$$ \Aut(\CM(G,X,p))=L(G)\langle\,\psi\,\rangle.$$ 

Following \cite{CJT}, we say that a regular 
Cayley map $\CM(G,X,p)$ is \emph{$t$-balanced} for some integer
$1\le t\le |X|-1$ if the associated power function $\pi$ satisfies
$\pi(x)=t$ for all  $x\in X$. In the extremal case when 
$t=1,$ the regular Cayley map is also called \emph{balanced},
and when $t=|X|-1$ it is called \emph{anti-balanced}.
\medskip

In the next proposition we describe a regular Cayley map on the
group $\Sym_n$ whose
underlying graph is a subgraph of $\Cayg(\Sym_n,T_n),$ and
whose automorphism group is equal to $L(\Sym_n)\bar{\F}$.
In fact, this Cayley map for $n=3$ coincides with the octahedron
given in Example~\ref{octahedron}.

\begin{proposition}\label{prop11sept}
Let $n\ge 3$ and $\M=\CM\big(\Sym_n,X,p)$ be the Cayley map, where
\begin{eqnarray*}
X &=& \{ \sigma(0,1,n),\sigma(0,n-1,n), \sigma(i,i+1,i+2) \mid i=0,\ldots,n-2 \}, \\
p &=& \big(\sigma(0,1,n),\sigma(0,n-1,n),\sigma(n-2,n-1,n),
\ldots,\sigma(0,1,2)\big).
\end{eqnarray*}
Then, $\M$ is a regular Cayley map which is non-$t$-balanced for
any $t.$ Also, it has valency $n+1,$ face size $n,$ and 
$\Aut(\M)=L(\Sym_n)\bar{\F}\cong\Sym_{n+1}$.
\end{proposition}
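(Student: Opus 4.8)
The plan is to verify, in order, the four assertions of Proposition~\ref{prop11sept}: that $\M$ is a well-defined Cayley map, that it is regular with $\Aut(\M)=L(\Sym_n)\bar{\F}\cong\Sym_{n+1}$, that it is non-$t$-balanced for every $t$, and that it has valency $n+1$ and face size $n$. The valency is immediate: $|X|=n+1$ since the $n$ block transpositions $\sigma(i,i+1,i+2)$ for $i=0,\dots,n-2$ together with the two extra generators $\sigma(0,1,n)$ and $\sigma(0,n-1,n)$ are pairwise distinct (this needs $n\ge 3$, and $n=3$ is the octahedron of Example~\ref{octahedron}). One should also note $X$ is inverse-closed: $\sigma(i,i+1,i+2)$ is an involution by \eqref{eqa18oct}, while $\sigma(0,1,n)^{-1}=\sigma(0,n-1,n)$, again by \eqref{eqa18oct}; and $X$ generates $\Sym_n$ because it already contains all the ``adjacent'' block transpositions $\sigma(i,i+1,i+2)$, which generate $\Sym_n$. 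So $\Cayg(\Sym_n,X)$ is a connected Cayley graph and $\M$ is a genuine Cayley map.

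The core step is to produce a skew-morphism $\psi$ of $\Sym_n$ with $\psi(x)=p(x)$ for all $x\in X$, and to identify it with an element of $\bar{\F}$; then Theorem~\ref{JS} gives regularity and $\Aut(\M)=L(\Sym_n)\langle\psi\rangle$. The natural candidate is $\psi=\bar{\f}$. By Lemma~\ref{mat27oct}, $\bar{\f}_r$ satisfies $\bar{\f}_r(\rho\circ\pi)=\bar{\f}_r(\rho)\circ\bar{\f}_s(\pi)$ with $s=(\rho^{-1})_r$; specializing $r=1$, $\bar{\f}(\rho\circ\pi)=\bar{\f}(\rho)\circ\bar{\f}^{\,(\rho^{-1})_1}(\pi)$, which is exactly the skew-morphism identity $\psi(\rho\pi)=\psi(\rho)\psi^{\pi(\rho)}(\pi)$ with power function $\pi(\rho)=(\rho^{-1})_1$ (reduced mod the order of $\bar{\f}$, which is $n+1$); and $\bar{\f}(\iota)=\iota$ is clear from \eqref{eq2jul27}. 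Thus $\bar{\f}$ is a skew-morphism of $\Sym_n$. It then remains to check that $\bar{\f}$ sends $X$ to $X$ cyclically in the order prescribed by $p$: using Lemma~\ref{22marchC2015}, $\bar{\f}(\sigma(i,i+1,i+2))=\sigma(i-1,i,i+1)$ for $i\ge 1$, $\bar{\f}(\sigma(0,1,2))=\sigma(1,2,n)$... — here I must recompute carefully from \eqref{22march2015} with $(i,j,k)=(0,1,2)$, $(0,1,n)$, $(0,n-1,n)$ to confirm the cycle closes up as $\big(\sigma(0,1,n),\sigma(0,n-1,n),\sigma(n-2,n-1,n),\dots,\sigma(0,1,2)\big)$ and back to $\sigma(0,1,n)$; this is the one genuinely fiddly verification. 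Given that, $p(x)=\bar{\f}(x)$ on $X$, so by Theorem~\ref{JS} the map is regular with $\Aut(\M)=L(\Sym_n)\langle\bar{\f}\rangle=L(\Sym_n)\bar{\F}$, and the isomorphism with $\Sym_{n+1}$ is Proposition~\ref{29oct} (or rather the sub-statement $L(\Sym_n)\bar{\F}\cong\Sym_{n+1}$ proved there via the map $\Phi$).

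Non-$t$-balancedness is then essentially free: the associated power function is $\pi(x)=(x^{-1})_1 \bmod (n+1)$, and evaluating it on two elements of $X$ with different first-coordinate-of-inverse shows it is non-constant on $X$. For instance $\sigma(0,1,2)=[2\,1\,3\cdots n]$ has inverse $[2\,1\,3\cdots n]$ with first entry $2$, while $\sigma(n-2,n-1,n)$ fixes $1$ so its inverse has first entry $1$; since $n\ge 3$ these are distinct mod $n+1$, so $\pi$ is not constant on $X$ and $\M$ is non-$t$-balanced for every $t$. (One checks the two exceptional generators similarly if needed, but one pair of witnesses already suffices.) Finally, the face size equals the order of the orbit of a dart under $R\circ T$; since $\Aut(\M)$ acts regularly on darts and $|X|=n+1$, the face length $f$ satisfies $|\Aut(\M)|=2|E|=|\Sym_n|(n+1)$ and Euler/incidence counting, or more directly the order of $R\circ T$, gives $f=n$ — concretely, the face containing the dart $(\iota,\sigma(0,1,n))$ is traced out as $\iota,\sigma(0,1,n),\sigma(0,1,n)\circ(\text{next generator}),\dots$ and returns after $n$ steps because $\bar{\f}$ has order $n+1$ and the relevant product collapses; this can be read off from the $n=3$ picture (triangular faces) and generalized. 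The main obstacle is the bookkeeping in matching $p$ with $\bar{\f}|_X$ and in pinning down the face length $n$ rigorously rather than by analogy with the octahedron; everything else reduces cleanly to Lemma~\ref{mat27oct}, Lemma~\ref{22marchC2015}, Theorem~\ref{JS}, and Proposition~\ref{29oct}.
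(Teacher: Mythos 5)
Your route is the same as the paper's: realize $\bar{\f}$ as a skew-morphism of $\Sym_n$ via Lemma~\ref{mat27oct} with power function $\rho\mapsto(\rho^{-1})_1$, check $p(x)=\bar{\f}(x)$ for $x\in X$ via Lemma~\ref{22marchC2015}, invoke Theorem~\ref{JS} for regularity and $\Aut(\M)=L(\Sym_n)\langle\,\bar{\f}\,\rangle=L(\Sym_n)\bar{\F}$, quote Proposition~\ref{29oct} for the isomorphism with $\Sym_{n+1}$, and get non-$t$-balancedness from non-constancy of the power function on $X$ (your witnesses, power values $2$ at $\sigma(0,1,2)$ and $1$ at $\sigma(n-2,n-1,n)$, work just as well as the paper's values $n$ at $\sigma(0,1,n)$ and $1$ at $\sigma(1,2,3)$).

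Two spots you left open need attention. First, the deferred verification does close up, but the one value you wrote down, $\bar{\f}(\sigma(0,1,2))=\sigma(1,2,n)$, is wrong (it is not even in $X$): by \eqref{22march2015} with $i=0$ one has $\bar{\f}(\sigma(0,j,k))=\sigma(j-1,k-1,n)$, so $\bar{\f}(\sigma(0,1,2))=\sigma(0,1,n)$, $\bar{\f}(\sigma(0,1,n))=\sigma(0,n-1,n)$, $\bar{\f}(\sigma(0,n-1,n))=\sigma(n-2,n-1,n)$, and $\bar{\f}(\sigma(i,i+1,i+2))=\sigma(i-1,i,i+1)$ for $i\ge 1$; this is exactly the cycle $p$, with $\sigma(0,1,2)$ mapping back to the starting generator. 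Second, the face size: your ``Euler/incidence counting'' does not determine it, since regularity together with $|X|=n+1$ only fixes the number of darts, not how $R\circ T$ partitions them (equivalently, not the genus), so face size $n$ cannot be read off from $|\Aut(\M)|$ alone. What is needed, and what the paper does, is to use regularity to conclude all faces have equal size and then trace one $R\circ T$-orbit explicitly: starting from $(\iota,\sigma(0,1,n))$, each application of $R\circ T$ multiplies the head of the dart by $p$ applied to the inverse of the generator just used, giving the closed walk through the $n$ vertices $\iota,\ \sigma(0,1,n),\ \sigma(0,1,n)\circ\sigma(n-2,n-1,n),\ \ldots,\ \sigma(0,1,n)\circ\sigma(n-2,n-1,n)\circ\cdots\circ\sigma(1,2,3)$. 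With those two computations written out, your proof coincides with the paper's.
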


\begin{proof}
To show that $\M$ is regular, it suffices to observe that Lemma~\ref{mat27oct} is equivalent to say that the toric map
$\bar{\f}$ is a skew-morphism of $\Sym_n$ whose power function
$\pi$ satisfies $\pi(\rho)=(\rho^{-1})_1$ for all $\rho\in\Sym_n$.
By Lemma~\ref{22marchC2015}, it is straightforward to check that 
$\bar{\f}(x)=p(x)$ for all $x\in X,$  and hence $\M$ is indeed regular according to Theorem~\ref{JS}.
The automorphism group $\Aut(\M)=L(\Sym_n)\langle\,\bar{\f}\,\rangle=L(\Sym_n)\bar{\F}$ which is isomorphic to
$\Sym_{n+1},$ see Proposition~\ref{29oct}. Also,
$\pi(\sigma(0,1,n))=n$ and $\pi(\sigma(1,2,3))=1,$ thus $\M$ is
not $t$-balanced for any possible $t$.

Finally, since $\M$ is regular, every face of $\M$ has the same size.
Here, we compute the face incident with the dart
$(\iota,\sigma(0,1,n)).$ For this purpose let $\sigma=\sigma(0,1,n)$.
Let $R$ and $T$ be the rotation and dart-reversing involution associated with $\M$. A direct computation gives that
the orbit of $(\iota,\sigma)$ under $R\circ T$ consists of $n$ darts
with the induced closed walk:
$$\iota \; - \; \sigma \; - \;  \sigma\circ\sigma(n-2,n-1,n) \; - \;
\sigma\circ\sigma(n-2,n-1,n)\cdots\sigma(1,2,3).
$$
This shows that the face incident with the dart $(\iota,\sigma)$ indeed consist of $n$ vertices.
\end{proof}

As a main result of this section, we prove the following converse of
Proposition~\ref{prop11sept}.

\begin{theorem}\label{thm11sept}
Let $n\ge 5$ and $\M=\CM(\Sym_n,X,p)$ be a regular
non-$t$-balanced Cayley map, then $|X| \ge n+1.$ In particular, if
$|X|=n+1,$ then there exists a regular Cayley map $\M'$ on $\Sym_n$
isomorphic to $\M,$ with $\Aut(\M')=L(\Sym_n)\bar{\F}$.
\end{theorem}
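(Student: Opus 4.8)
The strategy is to classify regular Cayley maps on $\Sym_n$ with small generating set via the associated skew-morphism $\psi$ and its power function $\pi$, and then use Theorem~\ref{main2} together with Lemma~\ref{L2} to pin down the structure. First I would recall that by Theorem~\ref{JS}, a regular Cayley map $\CM(\Sym_n,X,p)$ gives a skew-morphism $\psi$ of $\Sym_n$ with $p=\psi|_X$, and the automorphism group is $L(\Sym_n)\langle\psi\rangle$. Since $\M$ is not $t$-balanced, the power function $\pi$ is non-constant on $X$; in particular $\psi$ is not a group automorphism, so $\langle\psi\rangle$ contains a non-identity element that, together with $L(\Sym_n)$, generates a group strictly larger than $L(\Sym_n)$ but still acting on the Cayley graph $\Cayg(\Sym_n,X)$. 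The key point is that $\langle L(\Sym_n),\psi\rangle$ is a group of automorphisms of $\Cayg(\Sym_n,X)$ in which the identity stabilizer contains $\psi$, a permutation of $X$ fixing no element's $p$-image in the naive sense but fixing $1_G$.

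\textbf{Main steps.} The plan proceeds as follows. (1) Show $X$ is inverse-closed (true for any Cayley graph underlying a map with simple graph) and, because $\psi$ fixes $\iota$ and normalizes nothing trivial, that $\langle L(\Sym_n),\psi\rangle$ acts on $\Cayg(\Sym_n,X)$ with the identity-stabilizer acting faithfully on $X$ — this is where $|X|$ enters. (2) Embed this into $\Aut(\Cay)$: the obstacle here is that $X$ need not be contained in $T_n$, so I would instead argue that if $|X|\le n$, then $X$, being an inverse-closed generating set of size at most $n$, is too small to support a non-$t$-balanced regular Cayley map — here I would invoke known lower bounds (e.g. results on skew-morphisms of symmetric groups, or directly the fact that $\Sym_n$ has no proper subgroup of small index and that a non-trivial skew-morphism forces $|X|$ large). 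A cleaner route: use that the orbit of $\psi$ on $X$ has length $|X|$ (by regularity and connectivity the skew-morphism permutes $X$ in a single cycle $p$), and that a skew-morphism $\psi$ with non-constant power function on such a cycle has order strictly exceeding $|X|$, forcing constraints that fail unless $|X|\ge n+1$. (3) For the case $|X|=n+1$: show the skew-morphism must be (up to automorphism of $\Sym_n$) the toric map $\bar{\f}$. Here I would use that $\psi$ has order $n+1$ on the cycle $p$, that $\langle L(\Sym_n),\psi\rangle$ has order $(n+1)!$, hence by Proposition~\ref{29oct} and the classification it must be conjugate to $L(\Sym_n)\bar{\F}\cong\Sym_{n+1}$; then pull back $X$ and $p$ along this isomorphism to obtain the claimed $\M'$ with $\Aut(\M')=L(\Sym_n)\bar{\F}$.

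\textbf{Main obstacle.} The hard part will be step (2), the lower bound $|X|\ge n+1$: showing that no inverse-closed generating set of $\Sym_n$ of size $\le n$ admits a non-$t$-balanced regular Cayley map. I expect this requires a careful argument that the power function $\pi$, being non-constant and taking values in $\{0,\ldots,\operatorname{ord}(\psi)-1\}$, together with the skew-morphism identity $\psi(xy)=\psi(x)\psi^{\pi(x)}(y)$ restricted to the cyclic rotation on $X$, forces a relation among the elements of $X$ incompatible with $\langle X\rangle=\Sym_n$ when $|X|$ is small. I would attempt this by considering $\sum_{x\in X}\pi(x)$, which is known to be congruent to a fixed residue mod $\operatorname{ord}(\psi)$ for skew-morphisms, and combining it with the structure of the $\psi$-orbit; alternatively, reduce to the block-transposition setting by showing any such $X$ is $\Aut(\Sym_n)$-conjugate into $T_n$ and then apply Theorem~\ref{main2} to force $\langle\psi\rangle\le\D_{n+1}$, whose only skew-morphisms of order $n+1$ are powers of $\bar{\f}$, which have valency exactly $n+1$ on their natural generating set. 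Once $|X|\ge n+1$ is established, the equality case and the identification of $\Aut(\M')$ follow routinely from Proposition~\ref{29oct} and Proposition~\ref{prop11sept}.
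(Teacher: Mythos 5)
The central step of the theorem, the lower bound $|X|\ge n+1$, is exactly the point where your proposal has no actual argument, and the routes you sketch for it do not work. Your ``cleaner route'' rests on the claim that a skew-morphism with non-constant power function has order strictly exceeding $|X|$; for a regular Cayley map the associated skew-morphism $\psi$ has $X$ as a $\langle\psi\rangle$-orbit of length $|X|$ and $|\langle\psi\rangle|=|X|$ (this is exactly how the paper computes the index $|\Aut(\M):L(\Sym_n)|$), so that claim is false. Your alternative, reducing to the block-transposition setting by conjugating $X$ into $T_n$ and invoking Theorem~\ref{main2}, is unfounded: there is no reason an arbitrary inverse-closed generating set supporting a non-$t$-balanced regular Cayley map should be $\Aut(\Sym_n)$-conjugate into $T_n$, and Theorem~\ref{main2} says nothing about Cayley graphs on other generating sets. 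The paper's actual mechanism is different and quite short: let $K$ be the kernel of the action of $\Aut(\M)$ on the left $L(\Sym_n)$-cosets, i.e.\ the core of $L(\Sym_n)$ in $\Aut(\M)$. For $n\ge 5$ a non-trivial $K$ is $L(\Sym_n)$ or $L(\Alt_n)$; using that $K$ is normalized by $\psi$ one gets $\pi(k)=1$ for every $L_k\in K$, and then (via the fact from Jajcay--\v{S}ir\'a\v{n} that $\pi(\rho)=\pi(\sigma)$ iff $\pi(\rho\circ\sigma^{-1})=1$, plus $X\cap\Alt_n=\emptyset$ since $X$ generates $\Sym_n$) the map would be balanced or $t$-balanced, a contradiction. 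Hence $K$ is trivial, the coset action of degree $|X|$ is faithful, and $n!\cdot|X|\le |X|!$ gives $(|X|-1)!\ge n!$, i.e.\ $|X|\ge n+1$. This kernel-triviality argument, which is where the non-$t$-balanced hypothesis is actually used, is absent from your plan.

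For the equality case your outline points in the right direction (force $\Aut(\M)\cong\Sym_{n+1}$ and identify the complement with $\bar{\F}$), but ``pull back $X$ and $p$ along this isomorphism'' hides the real content: one must know that the point stabilizer $L(\Sym_n)$ sits in $\Sym_{[n]^0}$ as a standard $\Sym_n$ (Huppert's theorem for $n\ne 6$, and a separate argument via the outer automorphism when $n=6$), that the cyclic complement is generated by an $(n+1)$-cycle conjugate to $\alpha$, and that these identifications can be made compatibly with the Cayley-map structure; the paper does this through the lemmas of \cite{KK} and then computes directly that the resulting skew-morphism is $\bar{\f}_n$, so $\Aut(\M')=L(\Sym_n)\bar{\F}$. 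Proposition~\ref{29oct} alone does not deliver this identification.
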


\begin{proof}
Consider $\Aut(\M)$ acting on the left $L(\Sym_n)$-cosets in
$\Aut(\M)$ by left multiplication. More precisely, for
arbitrary automorphisms $a,b\in\Aut(\M),$
the coset $a\circ L(\Sym_n)$ is mapped by $b$ to the coset $b\circ a\circ L(\Sym_n)$. Let $K$ denote the kernel of this action. It is well-known that $K$ is equal to the largest normal subgroup of $\Aut(\M)$ contained in $\Sym_n$. Since $n \ge 5,$ if $K$ is non-trivial, then
$K=L(\Sym_n)$ or $L(\Alt_n),$ where
$L(\Alt_n)=\{L_\pi\mid\pi\in\Alt_n\}$. Here, we prove that actually those two cases do not occur. In fact, we find that in both cases $\M$ is $t$-balanced for a suitable integer $t$.

Let $\psi$ and $\pi$ be the skew-morphism and power function associated with $\M$. Let $k\in\Sym_n$ such that $L_k\in K$.
Since $K$ is normalized by $\psi,$ there exists $k_1\in\Sym_n$
such that $L_{k_1}\in K$  and $\psi\circ L_k=L_{k_1}\circ\psi$. Then $k_1=(L_{k_1}\circ\psi)(\iota)=
(\psi\circ L_k)(\iota)=\psi(k)$. Thus for every $\rho\in\Sym_n,$
$\psi(k\circ\rho)=(\psi\circ L_k)(\rho)=(L_{k_1}\circ\psi)(\rho)=
k_1\circ\psi(\rho)=\psi(k)\circ\psi(\rho)$.
On the other hand, $\psi(k\circ\rho)=\psi(k)\circ\psi^{\pi(k)}(\rho),$
and it follows that $\pi(k)=1$.
In particular, if $K=L(\Sym_n),$ then $\pi(x)=1$ for all $x\in X,$ and
$\M$ is balanced. Suppose that $K=L(\Alt_n)$.
It was shown in \cite{JS} that for all $\rho,\sigma\in\Sym_n,$
$\pi(\rho)=\pi(\sigma)$ if and only if $\pi(\rho\circ\sigma^{-1})=1$.
It follows from this that $\pi(\rho)=t$ for some integer $t\ge 1$ whenever $\rho$ is an odd permutation. Since $K=L(\Alt_n)$ is
normalized by $\psi$ and the identity $\iota$ is fixed by $\psi,$
$\psi$  maps $\Alt_n$ to itself.  Therefore, as $X$ is an
$\langle\,\psi\,\rangle$-orbit, see Theorem~\ref{JS}, the condition
$X\cap\Alt_n\ne\emptyset$ implies that $X\subset\Alt_n,$ which, however, is in contradiction with the fact that $X$ generates $\Sym_n$.
Thus $X\cap\Alt_n=\emptyset,$ and hence $\pi(x)=t$ for all $x\in X,$
and so $\M$ is $t$-balanced, as claimed.

We are left with the case when $K$ is trivial. In this case the
point stabilizer of the permutation group induced by the above action is
isomorphic to $\Sym_n$. Since it has degree
$|\Aut(\M) : L(\Sym_n)|=|\langle\,\psi\,\rangle|=|X|,\,(|X|-1)! \ge n!,$ and hence $|X| \ge n+1$. Moreover,
$|X|=n+1$ if and only if $\Aut(\M)\cong\Sym_{n+1}$. 
Now, it is more convenient to represent the symmetric group on $n+1$ elements as $\Sym_{[n]^0}$ since we are going to use two results about Cayley maps with a prescribed automorphism group proved in \cite[Section~3]{KK}.
By \cite[Lemma~3.3]{KK}, $\M$ is isomorphic to a Cayley
map $\CM(\Sym_{[n]^0},H,x,y)$  for a subgroup
$H\le\Sym_{[n]^0}$ and two permutations $x,y\in\Sym_{[n]^0}$
satisfying the following:
\begin{itemize}
\item $H\cong\Sym_n,$ $\Sym_{[n]^0}=H\circ Y$ and
$|H\cap Y|=1$ where $Y=\langle\, y\,\rangle;$
\item No non-trivial normal subgroup of $\Sym_{[n]^0}$ is contained
in $Y;$
\item   $\Sym_{[n]^0}=\big\langle Y,x \big\rangle$ and
$Y \circ x \circ Y=Y \circ x^{-1}\circ Y$.
\end{itemize}
By $\CM(\Sym_{[n]^0},H,x,y)$, we represent the Cayley map
$\CM(H,X,p),$ where
\begin{eqnarray*}
X &=& \{x_1,\ldots,x_{|Y|}\}, \\
p &=& (x_1,\ldots,x_{|Y|}),
\end{eqnarray*}
and the elements $x_i$ are defined recursively as $x_1=x.$ For $i > 1,$ $x_i$ is the
unique element of $H$ satisfying $x_i \circ Y=y\circ x_{i-1}\circ Y$.
In fact, the skew-morphism associated with
$\CM(\Sym_{[n]^0},H,x,y)$ maps an arbitrary permutation
$\rho\in H$ to the unique element $\rho'\in H$ for which
$\rho'\circ Y=y\circ\rho\circ Y$.

Here, we look for $H$ among the subgroups of
$\Sym_{[n]^0}$ isomorphic to $\Sym_n$. It is known that,
if $n\ne 6,$ then $H$ is conjugate to $\Sym_n$ in
$\Sym_{[n]^0},$ see \cite[5.5~Satz.~a)]{H}.
Now, we consider $\Sym_n$ as a subgroup of
$\Sym_{[n]^0}$. 
If $n=6,$ then $\Sym_n$ contains exactly 
two conjugacy classes of subgroups isomorphic to $\Sym_5,$
see \cite[5.5~Satz.~b)]{H}. Also, these two classes are switched by an automorphism of $\Sym_6$ (see, e.g.~\cite{M}). Therefore, by \cite[Lemma~3.1]{KK}, $H=\Sym_n$, up to the isomorphism of $\M.$

Now, the group $\Sym_{[n]^0}$ factorizes as
$\Sym_{[n]^0}=\Sym_n\circ Y$ with $|\Sym_n \cap Y|=1$.
These yield $|Y|=n+1,$ and that no non-identity
element of $Y$ can fix $0 \in [n]^0$. We conclude that $y$ is a cycle
in $\Sym_{[n]^0}$ of length $n+1$. Thus $y$ and the permutation
$\alpha=[1\,2\,\ldots\,n\,0]$ are conjugate via a suitable permutation
from $\Sym_n$.  Again, by \cite[Lemma~3.1]{KK},
$\M$ is isomorphic to a Cayley map
$\M'$ in the form
\begin{equation}\label{mapM'}
\M'=\CM(\Sym_{[n]^0},\Sym_n,x,\alpha).
\end{equation}

We conclude the proof by showing that the skew-morphism associated
with $\M'$ generates the toric group $\bar{\F},$ and thus
$\Aut(\M')=L(\Sym_n)\bar{\F},$ as claimed.  Let $\psi$ denote
the latter skew-morphism. An arbitrary permutation $\rho\in\Sym_n$ is mapped by $\psi$ to $\rho'$  defined by $\rho'\circ Y=\alpha\circ\rho\circ Y$.
Thus $\rho'=\alpha\circ\rho\circ\alpha^{s}$ for a suitable exponent
$s$. Then $0=\rho'_0=(\alpha\circ\rho\circ\alpha^{s})_0=
\alpha_{\rho_s},$ $\rho_s=n,$ and $s=(\rho^{-1})_n$. Comparing this with \eqref{eq2jul27}, we conclude that $\psi=\bar{\f}_n,$ and thus $\psi$ indeed generates $\bar{\F}$. This completes the proof of the theorem.
\end{proof}

\begin{remark}
The regular Cayley map
$\M'$ given in \eqref{mapM'} is not uniquely determined.
For instance, if $x=\sigma(0,1,n),$ then $\M'$ becomes the Cayley
map given in Proposition~\ref{prop11sept}. On the other hand,
when $n=5$ and $x=[5\,4\,2\,3\,1],$ $\M'$ becomes the Cayley map
$\CM(\Sym_5,X',p'),$ where
\begin{eqnarray*}
X' &=& \{ [5\,4\,2\,3\,1], 
[5\,3\,4\,2\,1],
[4\,5\,3\,2\,1],
[4\,3\,2\,1\,5],
[1\,5\,4\,3\,2],
[5\,4\,3\,1\,2]
\};  \\
p' &=& \big(
[5\,4\,2\,3\,1], 
[5\,3\,4\,2\,1],
[4\,5\,3\,2\,1],
[4\,3\,2\,1\,5],
[1\,5\,4\,3\,2],
[5\,4\,3\,1\,2]
\big).
\end{eqnarray*}
\end{remark}
\noindent 
Computations carried out with the package ``grape'' of GAP \cite{gap} showed that $\Cayg(\Sym_5,X')$ is
not isomorphic to $\Cayg(\Sym_5,X),$ where $X$
is the subset of $T_5$ given in Proposition~\ref{prop11sept}.
Consequently, also the Cayley maps $\CM(\Sym_5,X,p)$ and
$\CM(\Sym_5,X',p')$ are not isomorphic, whereas both of them have the same automorphism group, namely $L(\Sym_5)\bar{\F}$.

\end{document}